\newcommand{\N}{\mathbb{N}}
\newcommand{\R}{\mathbb{R}}
\newcommand{\Chi}{\mathcal{X}}
\newcommand{\on}{\quad \text{on} \enspace}
\newcommand{\Set}[2]{\{\, #1 \ \mid #2 \, \}}
\newcommand{\bigSet}[2]{\big\{\, #1 \ \big| #2 \, \big\}}
\newcommand{\de}{\mathrm{d}}
\newcommand{\integralde}{\thinspace\mathrm{d}}
\newcommand{\integral}[1]{\int_{#1}}
\newcommand{\norm}[1]{\|#1\|}
\newcommand{\eps}{\varepsilon}
\newcommand{\union}[1][]{\bigcup_{#1}}
\newcommand{\intersection}[1][]{\bigcap_{#1}}
\theoremstyle{plain}
\newtheorem{thm}{Theorem}[section]
\newtheorem{cor}[thm]{Corollary}
\newtheorem{lemma}[thm]{Lemma}
\newtheorem{prop}[thm]{Proposition}
\theoremstyle{definition}
\newtheorem{defn}[thm]{Definition}
\theoremstyle{remark}
\newtheorem{remark}[thm]{Remark}
\newcommand{\sequence}[2]{\{#1\}_{#2}\ }
\newcommand{\unmezzo}{\frac{1}{2}}
\newcommand{\spazio}{\enspace}
\newcommand{\Wspace}[2]{W^{#1,#2}}
\newcommand{\Lspace}[1]{L^{#1}}
\newcommand{\Llocspace}[1]{L_{loc}^{#1}}
\newcommand{\BoundedVariation}{BV}
\DeclareMathOperator{\leb}{\mathcal{L}}
\newcommand{\lebn}{\leb^n}
\newcommand{\lebone}{\leb^1}
\newcommand{\closure}[2][]{\overline{#2}^{#1}}
\newcommand{\characteristic}[1]{\Chi_{#1}}
\newcommand{\uk}{u_k}
\newenvironment{claim}[1]
{\innerclaim}
{\endinnerclaim}
\newcommand{\measurablesets}{\mathcal{M}_n}
\newcommand{\fractional}[1]{\mathcal{P}_{#1}}
\newcommand{\Ps}{\fractional{s}}
\newcommand{\GeometricProblem}[2]{\left({GP}^s_{#1,#2}\right)}
\newcommand{\GeometricProblemSolutions}[1]{\boldsymbol{GSol}^s\left(#1\right)}
\newcommand{\FunctionalProblem}[2]{\left({P}^s_{#1,#2}\right)}
\newcommand{\FunctionalProblemSolutions}[1]{\boldsymbol{Sol}^s\left(#1\right)}
\newcommand{\FunctionalProblemJumps}[1]{\boldsymbol{J}^s\left(#1\right)}
\newcommand{\maxi}[1]{{#1}^{+}}
\newcommand{\mini}[1]{{#1}^{-}}
\newcommand{\Emaxi}{\maxi{E}}
\newcommand{\Emini}{\mini{E}}
\newcommand{\Renne}{\R^n}
\newcommand{\Rtwon}{\R^{2n}}
\newcommand{\comp}[1]{{#1}^c} %\complement of a set
\newcommand{\Eone}{E_{1}}
\newcommand{\Etwo}{E_{2}}
\newcommand{\Eonemini}{\mini{\Eone}}
\newcommand{\Etwomini}{\mini{\Etwo}}
\newcommand{\Eonemaxi}{\maxi{\Eone}}
\newcommand{\Etwomaxi}{\maxi{\Etwo}}
\newcommand{\Ei}{E_{i}}
\newcommand{\Ej}{E_{j}}
\newcommand{\Etilde}{\tilde{E}}
\newcommand{\Fk}{F_{k}}
\newcommand{\onetwo}{\{1,2\}}
\newcommand{\forallionetwo}{\forall i\in\{1,2\}}
\newcommand{\BallRadiusCenter}[2]{B_{#1}({#2})}
\newcommand{\BallVolumeCenter}[2]{B^{#1}({#2})}
\newcommand{\BallRadius}[1]{B_{#1}}
\newcommand{\BallVolume}[1]{B^{#1}}
\newcommand{\cn}{c_{n,s}}
\newcommand{\omegan}{\omega_{n}}
\newcommand{\rzero}{r_{0}}
\newcommand{\BR}{\BallRadius{R}}
\newcommand{\Br}{\BallRadius{r}}
\newcommand{\Brx}{\BallRadiusCenter{r}{x}}
\newcommand{\Brhoy}{\BallRadiusCenter{\rho}{y}}
\newcommand{\BU}{\BallVolume{|U|}}
\newcommand{\BUy}{\BallVolumeCenter{|U|}{y}}
\newcommand{\PsiLambda}[1]{\sEnergyGeometric{\left(#1;\BallRadiusCenter{r}{x},\Lambda}\right)}
\newcommand{\integraldex}[2]{\integral{#1}#2\integralde x}
\newcommand{\fracTotalVar}[2]{[D #2]_{#1}}
\newcommand{\sTotalVar}[1]{\fracTotalVar{s}{#1}}
\newcommand{\EnergyLOneFracTV}[2]{\mathcal{E}^{#2}#1}
\newcommand{\EnergyGeometric}[2]{\mathcal{E}^{#2}_{\mathbf{G}}#1}
\newcommand{\sEnergyLOneFracTV}[1]{\EnergyLOneFracTV{#1}{s}}
\newcommand{\sEnergyGeometric}[1]{\EnergyGeometric{#1}{s}}
\DeclareMathOperator*{\supp}{supp}
\newcommand{\Holder}{H\"{o}lder}
\DeclareMathOperator*{\esssup}{ess-sup}
\DeclareMathOperator*{\essinf}{ess-inf}
\newcommand{\Uk}{U_{k}}
\newcommand{\Vk}{V_{k}}
\newcommand{\Un}{U_{n}}
\newcommand{\fk}{f_{k}}
\newcommand{\ueps}{u_{\eps}}
\newcommand{\feps}{f_{\eps}}
\newcommand{\interaction}{L_{n,s}}
\newcommand{\distSolDatPLUS}{\mu_{s}^{+}}
\newcommand{\distSolDatMINUS}{\mu_{s}^{-}}
\newcommand{\distSolDatPLUSMINUS}{\mu_{s}^{\pm}}
\newcommand{\sTVLone}{sTV\hspace*{-0.1 cm}-\hspace*{-0.1 cm}L^1}
\newcommand{\TVLone}{TV\hspace*{-0.1 cm}-\hspace*{-0.1 cm}L^1}
\newcommand{\TVLtwo}{TV\hspace*{-0.1 cm}-\hspace*{-0.1 cm}L^2}
\newcommand{\Esedoglu}{Esedo\={g}lu }
\newcommand{\Et}{E_t}
\newcommand{\Ut}{U_t}
\newcommand{\Ezero}{E_0}
\newcommand{\Uzero}{U_0}
\newcommand{\U}{U}
\newcommand{\ULambdaone}{U_{\Lambda}^{1}}
\newcommand{\ULambdatwo}{U_{\Lambda}^{2}}
\newcommand{\ULambdai}{U_{\Lambda}^{i}}
\newcommand{\ULambdaj}{U_{\Lambda}^{j}}
\newcommand{\Uone}{U_{1}}
\newcommand{\Utwo}{U_{2}}
\newcommand{\xzero}{x_{0}}
\newcommand{\Hnmenouno}{\mathcal{H}^{n-1}}
\newcommand{\Cheegs}{h_{s}}
\newcommand{\sCheegerClass}{\mathcal{C}_s}
\title{Fractional total variation denoising model with $L^1$ fidelity}
\subjclass[2020]{49Q20, 94A08.}
\keywords{Fractional total variation, Fractional perimeter, Image denoising, Regularity of minimizers.}
\author{K. Bessas}
\address{University of Pisa, Department of Mathematics,
	Largo Bruno Pontecorvo, 5, 
	56127 Pisa PI, Italy}
\email{konstantinos.bessas@phd.unipi.it}
\begin{document}
\begin{abstract}
	We study a nonlocal version of the total variation-based model with $L^1$ fidelity for image denoising, where the regularizing term is replaced with the fractional $s$-total variation. We discuss regularity of the level sets and uniqueness of solutions, both for high and low values of the fidelity parameter. 
	We analyse in detail the case of binary data given by the characteristic functions of convex sets.
\end{abstract}

\maketitle

\keywords

\tableofcontents

\section{Introduction}\label{sec:introduction}

Let $f\in\Lspace{1}(\Renne)$ a given function, $\Lambda>0$ and $s\in(0,1)$ two parameters. We are interested in studying the following variational problem:
\begin{equation}\label{eq:OurModel}
\min_{u\in\Wspace{s}{1}(\Renne)}\unmezzo\integral{\Renne}\integral{\Renne}\frac{|u(x)-u(y)|}{|x-y|^{n+s}}\integralde x\integralde y+\Lambda\integraldex{\Renne}{|u-f|},\tag{$\sTVLone$}
\end{equation}
where $\Wspace{s}{1}(\Renne)$ is the fractional Sobolev space of order $(s,1)$.

A motivation for considering such an energy comes from image processing.
In particular, \eqref{eq:OurModel} can be interpreted as a denoising model. In this setting the dimension is equal to two, even though the results that we will present hold for any positive integer $n$. We consider an image on a certain screen, coinciding with the whole $\Renne$ in our case, which is the distorted version of a clearer initial image. We treat greyscale images only, and each one of them is represented by a real function over $\Renne$, namely its greyscale. Therefore, with a little abuse of notation, we will identify an image with its greyscale in the sequel. In this problem, $f$ is interpreted as the given image, which may be somehow degraded by some ``\textit{noise}'', while any solution $u$ of \eqref{eq:OurModel} is interpreted as a \textit{denoised} $f$, so that the noise is captured by their difference $f-u$. 
According to the model, on the one hand $u$ aims at approximating $f$, depending on the value of the \textit{fidelity} parameter $\Lambda$, because of the second addend of the functional, which is an $\Lspace{1}$-norm.
On the other hand $u$ aims at regularizing $f$, thanks to the first addend of the functional, which is the \textit{fractional $s$-total variation} of $u$, or -equivalently- half the seminorm of $\Wspace{s}{1}(\Renne)$.

We will now explain why the model \eqref{eq:OurModel} takes actually this form starting from a review of its more classical versions. 

A motivation for our study is the total variation-based image denoising model deeply investigated by Chan and \Esedoglu in \cite{ChaEse}:

\begin{equation}\label{eq:ChanEsedogluModel}
\min_{u\in\BoundedVariation(\Renne)}\integraldex{\Renne}{|\nabla u|}+\Lambda\integraldex{\Renne}{|u-f|},\tag{$\TVLone$}
\end{equation}
where $f\in\Lspace{1}(\Renne)$, $\Lambda>0$ and $\BoundedVariation(\Renne)$ is the space of functions of bounded variation.
\eqref{eq:ChanEsedogluModel} arises as a variant of the total variation-based
model of Rudin, Osher and Fatemi (also referred as ROF model) introduced in \cite{RudOshFat}:
\begin{equation}\label{eq:ROFModel}
\min_{u\in\BoundedVariation(\Renne)}\integraldex{\Renne}{|\nabla u|}+\Lambda\integraldex{\Renne}{|u-f|^2},\tag{$\TVLtwo$}
\end{equation}
with $f\in\Lspace{2}(\Renne)$ and $\Lambda>0$.

The difference between \eqref{eq:ROFModel} and \eqref{eq:ChanEsedogluModel} lies in the approximation term, since in the former case the kind of approximation is in $\Lspace{2}$, while in the latter it is in $\Lspace{1}$.
From a mathematical point of view, with this modification the original ROF functional loses its strict convexity, becoming merely convex. This is immediately reflected in the lack of uniqueness of solutions of \eqref{eq:ChanEsedogluModel}, which has interesting consequences in applications. Furthermore, the new functional is $1$-homogeneous, which makes the model \eqref{eq:ChanEsedogluModel} contrast invariant (cf. \Cref{rem:contrastInvariance}), differently from \eqref{eq:ROFModel}. 
On the whole, from a numerical point of view, the model \eqref{eq:ChanEsedogluModel} can overcome several difficulties arising from the model \eqref{eq:ROFModel} because of its sensitivity to the geometric character of the features of the given image, i.e. their shapes, rather than the contrast among them. 
For a deeper comparison between these two models and detailed explanations of numerical results we refer to \cite{ChaEse} and the references therein.
 
The models presented above are local, in the sense that the regularizing term relies on a local operator, namely the total variation of functions of bounded variation.   

As explained in \cite{BuaColMor}, the local methods, such as the ones based on the total variation, aim at a noise reduction and at a reconstruction of the main geometrical configurations but not at the preservation of the fine structure, details, and texture of the given image. Due to the weak regularity assumptions on the given image in the previous models, details
and fine structures are smoothed out because they are treated as noise.
Therefore, considering these issues, Buades, Coll and Morel turned their attention towards nonlocal approaches.  

Later, in \cite{GilOsh07}, Gilboa and Osher proposed a generalization of the total variation in the nonlocal framework \cite[see][equation (C.1)]{GilOsh07}:

\begin{equation}\label{eq:wtotvar}
J_1(u):=\frac{1}{2}\integral{\Omega\times\Omega}|u(x)-u(y)| w(x,y)\integralde x \integralde y,
\end{equation}
where $\Omega$ is a domain of $\Renne$ and $w$ is a \textit{weight} function, i.e. a positive function defined in $\Omega\times\Omega$ satisfying the following symmetry property: $w(x,y) = w(y,x)$ for all $x,y\in\Omega$.
So, \eqref{eq:wtotvar} is interpreted as a weighted nonlocal total variation.

Then, the authors in \cite{GilOsh08} proposed several generalizations of this object  \cite[see][equations (2.10), (2.11)]{GilOsh08}, introducing for instance a kind of \textit{isotropic} nonlocal total variation \cite[equation (2.14)]{GilOsh08} as well as its \textit{anisotropic} counterpart \cite[equation (2.15)]{GilOsh08}.

In both works, denoising models made of a nonlocal total variation term and an approximation term, such as approximations involving $\Lspace{1}$ and $\Lspace{2}$ distances, are analysed and numerically applied.
In particular, nonlocal versions of \eqref{eq:ROFModel} are investigated 
\cite[cf.][equation (2.5),(2.7)]{GilOsh07}, \cite[cf.][equation (4.1)]{GilOsh08}. It is also shown that the nonlocal version of \eqref{eq:ChanEsedogluModel} \cite[see][equation (4.3)]{GilOsh08} can be used to detect and remove irregularities from textures.

Moreover, in \cite{MazSolTol}, Mazón, Solera and Toledo analysed the $(BV,L^1)$ and the $(BV,L^2)$ minimization problems in metric random walk spaces $[X,d,m]$, which, in particular cases \cite[see][Example 1.1]{MazSolTol}, translate themselves into denoising models made of an approximation term in $L^1$ (respectively $L^2$) and a nonlocal total variation term of the form:
\begin{equation}\label{eq:MazSolTolJTotVar}
	TV_{m^J}(u):=\frac{1}{2}\integral{\Renne\times\Renne}|u(x)-u(y)| J(x-y)\integralde x \integralde y,
\end{equation}
where $J\in\Lspace{1}(\Renne)$ satisfies the condition $\integraldex{\Renne}{J(x)}=1$. For the definitions and main properties of nonlocal objects arisen from nonsingular kernels like $J$ we refer to the monograph \cite{MazRosTol}, although in our work we will only focus our attention on the fractional kernel, which is singular.

The fractional $s$-total variation appearing in \eqref{eq:OurModel} is very similar -at least formally- to the nonlocal functional \eqref{eq:wtotvar} for a suitable choice of $w$ and to \eqref{eq:MazSolTolJTotVar}, upon replacing $J$ with the fractional kernel. However, the authors in \cite{GilOsh07}, \cite{GilOsh08} and \cite{MazSolTol} considered different type of weights and kernels. 

Nevertheless, our choice is naturally motivated by the following \textit{coarea formula} (see also \eqref{eq:sPerCoarea}):
\begin{align}\label{eq:sPerCoareaINTRO}
\unmezzo\integral{\Renne}\integral{\Renne}\frac{|u(x)-u(y)|}{|x-y|^{n+s}}\integralde x\integralde y=\int_{-\infty}^{+\infty}\Ps(\{u>t\})\integralde t,
\end{align}
where $\Ps$ is defined for all measurable sets $E\subset\Renne$ as
\begin{align}\label{eq:sFracPerDef}
\Ps(E):=\integral{E}\integral{\comp{E}}\frac{1}{|x-y|^{n+s}}\integralde x\de y,
\end{align}
and it is called the $s$-fractional perimeter. This type of perimeter has gained more and more attention since the paper \cite{CafRoqSav}, where Caffarelli, Roquejoffre and Savin studied it in the context of nonlocal minimal surfaces.

\eqref{eq:sPerCoareaINTRO} is the fractional analogue of the \textit{coarea formula} for the classical De Giorgi's perimeter \cite[see][Theorem 3.40]{AmbFusPal} and allows us to generalize to the model \eqref{eq:OurModel} many known techniques for dealing with the model \eqref{eq:ChanEsedogluModel}. 

It could be also worth remarking that in \cite{CinSerVal} Cinti, Serra and Valdinoci pointed out some advantages of using fractional perimeters instead of the classical one in image processing.

Furthermore, as explained in \cite{YCZC}, currently, fractional calculus provides an important tool for image denoising, even though the models cited by the authors of \cite{YCZC} differ from the model \eqref{eq:OurModel}, being based on other fractional objects. Specifically, many fractional model-based methods which were proposed make use of fractional-order derivatives to replace the regularizing term in \eqref{eq:ROFModel}. It was shown that the fractional-order derivative not only maintains the contour feature in the smooth area of the image, but also preserves high-frequency components like edges and textures.

Finally, in \cite{NovOno21} the authors considered the model corresponding to \eqref{eq:OurModel} with $L^2$-fidelity.

\smallskip

One of the key results of our work is the following: 
\begin{thm}[cf. \Cref{thm:lemmetto per lavoro su variazione seconda}]
	Let $\measurablesets$ denote the class of Lebesgue measurable sets of $\Renne$ and let $s\in(0,1)$ and $E\subseteq\Renne$ with $\partial E\in C^{1,1}$ either bounded or with bounded complement.
	Then, there exists $\Lambda(E,s)>0$ s.t. 
	\begin{equation*}
		\Ps(E)-\Ps(\U)\leq\Lambda|E\Delta \U|
	\end{equation*}
	for every $\U\in\measurablesets$ and $\Lambda\geq\Lambda(E,s)$.
\end{thm}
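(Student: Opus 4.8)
The plan is to show that the "deficit" $\Ps(E) - \Ps(\U)$ is controlled by the measure of the symmetric difference, uniformly over all competitors $\U$, once $\Lambda$ is large. The crucial point is that $C^{1,1}$ regularity of $\partial E$ gives a \emph{uniform interior and exterior ball condition}: there exists $\rho = \rho(E) > 0$ such that at every boundary point one can fit a ball of radius $\rho$ inside $E$ and another inside $\comp{E}$. This is the quantitative substitute for smoothness that we will exploit.

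First I would reduce to competitors $\U$ that are $L^1$-close to $E$, say $|E \Delta \U| \le \delta$ for a small $\delta$ to be fixed: if $|E\Delta\U| > \delta$, then since $\Ps(E)$ is a fixed finite number (finiteness of the fractional perimeter of a $C^{1,1}$ set that is bounded or has bounded complement is standard, and follows e.g. from the interior/exterior ball condition) and $\Ps(\U) \ge 0$, the inequality $\Ps(E) - \Ps(\U) \le \Ps(E) \le (\Ps(E)/\delta)\,|E\Delta\U| \le \Lambda |E\Delta\U|$ holds automatically for $\Lambda \ge \Ps(E)/\delta$. So the content is in the regime of small symmetric difference.

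Next, for $\U$ with $|E\Delta\U|$ small, I would estimate $\Ps(E) - \Ps(\U)$ from above. Write $\Ps(\U) = \Ps(E) + \big(\Ps(\U) - \Ps(E)\big)$ and expand the difference of the double integrals defining $\Ps$ in \eqref{eq:sFracPerDef}. Splitting $\U = (E \setminus A) \cup B$ with $A = E \setminus \U$, $B = \U \setminus E$ disjoint from $E$, the difference $\Ps(E) - \Ps(\U)$ can be written as a sum of interaction integrals $L_{n,s}(A,B)$-type terms between the pieces $A$, $B$, $E\setminus A$, $\comp{E}\setminus B$ against the singular kernel $|x-y|^{-n-s}$. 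The "bad" (negative-sign, i.e. perimeter-increasing when removed) contributions are bounded by the total interaction of $A \cup B$ with the rest of space, which near $\partial E$ behaves, thanks to the uniform two-sided ball condition, like the contribution one would get from removing a thin layer: each point removed from $E$ near the boundary can "see" at most a bounded amount of new complement through the kernel. Quantitatively, using that $A, B$ lie in an $\eta$-neighbourhood of $\partial E$ when $|E\Delta\U|$ is small enough (this uses the ball condition to force $L^1$-small perturbations to concentrate near the boundary — otherwise a chunk far from $\partial E$ would itself create fractional perimeter of order $|A|^{(n-s)/n}$, which would only help), one gets
\[
\Ps(E) - \Ps(\U) \;\le\; C(n,s,\rho)\,|E\Delta\U| \;+\; (\text{nonpositive terms}),
\]
and setting $\Lambda(E,s) = \max\{\,C(n,s,\rho),\ \Ps(E)/\delta\,\}$ closes the argument.

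I expect the main obstacle to be making rigorous the claim that an $L^1$-small perturbation $\U$ of $E$ must be concentrated near $\partial E$ with the right quantitative dependence, \emph{and} controlling the singular kernel interaction of the near-boundary layer by its volume with a constant depending only on $\rho$ and not on $\U$. The subtlety is that the fractional kernel is not integrable at the diagonal, so one cannot simply bound $L_{n,s}(A, \text{rest})$ by $\|A\|_{L^1}$ times a constant; instead one must use the geometry — the interior ball at each point of $\partial E$ means that a point $x \in A$ at distance $t$ from $\partial E$ sees complement only beyond distance comparable to $t$ in the "inward" directions, which makes the relevant one-sided kernel integral convergent and of order $t^{-s}$, and then one integrates $t^{-s}$ against the layer thickness — but here the $C^{1,1}$ hypothesis is exactly what keeps this finite and uniform. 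Carrying this through carefully, presumably by comparison with a half-space or a ball of radius $\rho$, is the technical heart of the proof; everything else is bookkeeping on the six-term expansion of the perimeter difference.
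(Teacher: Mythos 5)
Your reduction to the regime $|E\Delta\U|\le\delta$ is fine (since $\Ps(E)<+\infty$ for a $C^{1,1}$ set that is bounded or has bounded complement), but the core estimate in the small--symmetric-difference regime has a genuine gap. Writing $A:=E\setminus\U$ and $B:=\U\setminus E$, the expansion you allude to reads
\begin{align*}
\Ps(E)-\Ps(\U)&=\interaction\big(A\times(\comp{E}\setminus B)\big)+\interaction\big(B\times(E\setminus A)\big)\\
&\quad-\interaction\big(A\times(E\setminus A)\big)-\interaction\big(B\times(\comp{E}\setminus B)\big),
\end{align*}
and your plan is to bound the two positive terms linearly in $|A\cup B|$ using the interior/exterior ball condition. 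But what that condition actually gives is the pointwise bound $\int_{\comp{E}}|x-y|^{-n-s}\,\de y\le C\,\dist(x,\partial E)^{-s}$ for $x\in E$, which blows up at $\partial E$; integrating it over the layer $A=\{x\in E:\dist(x,\partial E)<\eta\}$ produces $\approx\eta^{1-s}$, not $C|A|\approx C\eta$. So the claim that ``each point removed from $E$ near the boundary can see at most a bounded amount of new complement'' is false, and no bound of the form $\interaction(A\times\comp{E})\le C|A|$ can hold. The linear estimate can only come from the \emph{cancellation} between $\interaction(A\times(\comp{E}\setminus B))$ and $\interaction(A\times(E\setminus A))$ (and the analogous pair for $B$): this cancellation is exact for translated half-spaces and explicit for concentric balls, but for a general competitor the set $E\setminus A$ against which you must cancel depends on the arbitrary set $A$ itself, and your sketch supplies no mechanism for extracting a linear remainder. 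This is not bookkeeping; it is the content of the theorem restated.

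The paper avoids the expansion altogether. It first proves the statement when $E$ is a ball: by the fractional isoperimetric inequality any minimizer of $\Ps(\cdot)+\Lambda|\cdot\,\Delta \BallRadiusCenter{r}{x}|$ must be a ball contained in $\BallRadiusCenter{r}{x}$, which reduces the problem to a one-variable computation and yields the explicit threshold $\Lambda>\frac{\cn}{\omegan}r^{-s}$. It then establishes, via submodularity of $\Ps$, a comparison principle for the minimal and maximal solutions of the geometric problem, and uses the uniform two-sided ball condition only to sandwich: every interior ball forces itself inside the minimal solution, every exterior ball excludes itself from the maximal one, hence both coincide with $E$. If you want to salvage a direct argument in the spirit of your proposal, the natural route is a nonlocal calibration (the fractional analogue of pairing with the gradient of the signed distance, as in the classical Acerbi--Fusco--Morini lemma), but that too requires an input beyond the pointwise kernel bounds you invoke.
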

This is the fractional version of a result for the classical perimeter \cite[see for instance][Lemma 4.1]{AceFusMor} and might be of independent interest. In fact, this may be interpreted as a uniform estimate of the difference between the $s$-perimeter of a given regular set and the one of any measurable set, w.r.t. their $\Lspace{1}$ distance. 
While in the classical case this is a simple consequence of De Giorgi's Structure Theorem, in the nonlocal framework the result is more delicate. In fact, we first prove it when the set $E$ is a ball; then, through a comparison argument, we establish the result in the general case.

Instead the other results are more focused on the properties of solutions of \eqref{eq:OurModel}, such as the following one, which is about regularity of superlevel and sublevel sets of solutions:
\begin{thm}[cf. \Cref{cor:regularity}]
	Let $s\in(0,1)$, $\Lambda>0$ and $f\in\Lspace{1}(\Renne)$.
	If $u$ solves \eqref{eq:OurModel}, then all of its superlevel and sublevel sets 
	have boundary of class $C^1$ outside a closed singular set $S$ of Hausdorff
	dimension at most $n-3$.
\end{thm}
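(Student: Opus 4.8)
The plan is to pass, via the coarea formula, from the variational problem \eqref{eq:OurModel} to a purely geometric obstacle problem for the fractional perimeter, and then to feed its minimizers into the regularity theory for nonlocal minimal surfaces.

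\emph{Step 1 (reduction to the geometric problem).} By the coarea formula \eqref{eq:sPerCoareaINTRO} for the fractional $s$-total variation, together with the layer-cake identity $\integraldex{\Renne}{|u-f|}=\int_{\R}|\{u>t\}\Delta\{f>t\}|\integralde t$, the functional in \eqref{eq:OurModel} rewrites as
\begin{equation*}
\int_{\R}\Big(\Ps(\{u>t\})+\Lambda\,|\{u>t\}\Delta\{f>t\}|\Big)\integralde t .
\end{equation*}
Exploiting the submodularity of $\Ps$ and the fact that $t\mapsto\{f>t\}$ is a nested family, the slicing–reconstruction argument for $1$-homogeneous functionals (established in the earlier sections of the paper) yields that $u\in\FunctionalProblemSolutions{f,\Lambda}$ if and only if $\{u>t\}\in\GeometricProblemSolutions{\{f>t\},\Lambda}$ for a.e.\ $t\in\R$, where $\GeometricProblem{G}{\Lambda}$ denotes $\min\{\Ps(E)+\Lambda|E\Delta G|\}$. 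Hence $E_t:=\{u>t\}$ minimizes $\GeometricProblem{\{f>t\}}{\Lambda}$ for a.e.\ $t$; by monotone approximation — lower semicontinuity of $\Ps$ under $L^1_{loc}$-convergence of the indicators — the sets $\{u>t\}$ and $\{u\ge t\}$ for \emph{every} $t$ inherit the minimality property, and the sublevel sets $\{u<t\},\{u\le t\}$ are treated as superlevel sets of $-u$, which solves \eqref{eq:OurModel} with datum $-f$.

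\emph{Step 2 (regularity of the level sets).} Fix a level $t$ and put $E:=E_t$, $G:=\{f>t\}$. Minimality gives, for every competitor $F$ with $F\Delta E\Subset\BallRadiusCenter{r}{\xzero}$,
\begin{equation*}
\Ps(E)\le\Ps(F)+\Lambda|E\Delta F|\le\Ps(F)+\Lambda\,\omegan r^{n}.
\end{equation*}
Since $s<1$, the volume term is of lower order than the natural scale $r^{n-s}$ of $\Ps$ in $\BallRadiusCenter{r}{\xzero}$, so $E$ is an almost-minimizer (a $\Lambda$-minimizer) of the fractional perimeter in the sense of \cite{CafRoqSav}: the clean density estimates, the nonlocal monotonicity formula and the improvement-of-flatness iteration hold for $E$ at small scales with constants depending only on $n,s,\Lambda$. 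Applying the Caffarelli–Roquejoffre–Savin regularity theory (robust under this bounded volume perturbation) gives that $\partial E$ is $C^{1,\alpha}$, hence $C^{1}$, outside a closed set $\Sigma_E$ with $\dim_{\mathcal H}\Sigma_E\le n-2$; then Federer's dimension-reduction argument applied to the blow-up cones of $E$, combined with the classification of $s$-minimal cones in the plane (Savin–Valdinoci: planar $s$-minimal cones are half-spaces), improves this to $\dim_{\mathcal H}\Sigma_E\le n-3$, uniformly in $t$. Since $t$ was arbitrary, every superlevel and sublevel set of $u$ has boundary of class $C^{1}$ outside its (closed) singular set $S:=\Sigma_E$, and $\dim_{\mathcal H}S\le n-3$, which is the assertion.

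\emph{Main obstacle.} The delicate point is not Step 1 (routine once the geometric problem and its equivalence with \eqref{eq:OurModel} have been set up) but the passage through the regularity theory in Step 2: one must check that the fractional partial-regularity machinery of \cite{CafRoqSav} and the subsequent dimension-reduction to the $n-3$ bound are genuinely stable under the addition of the penalization $\Lambda|E\Delta G|$ — equivalently, that the bounded forcing term it produces in the Euler–Lagrange equation for the nonlocal mean curvature of $\partial E$ does not spoil the flatness-improvement scheme and that the blow-up still converges to a minimizing cone. Keeping the constants in the almost-minimality uniform in the level $t$ (so that the dimension estimate is uniform, as needed to phrase the conclusion for all level sets at once) is the quantitative heart of the argument.
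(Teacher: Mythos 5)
Your argument follows essentially the same route as the paper: pass to the geometric problems $\GeometricProblem{\{f>t\}}{\Lambda}$ via the coarea/layer-cake equivalence, note that the level sets of a solution are then localized almost-minimizers of $\Ps$ (the penalization contributing only $\Lambda\omegan r^{n}=o(r^{n-s})$), and invoke the partial regularity theory for such sets, which the paper simply cites as \cite[Theorem~3.3]{CesNov17} rather than re-running the Caffarelli--Roquejoffre--Savin and Savin--Valdinoci machinery you sketch. The only point you gloss over is the level $t=0$, where global minimality of $\{u>0\}$ may fail for infinite-measure reasons and the paper substitutes a remark establishing local minimality, which is all the regularity theorem requires.
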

This is a consequence of regularity of local almost minimizers of
the fractional perimeter \cite[see for instance][Theorem 3.3]{CesNov17}.

The following two theorems are concerned about sufficient conditions for uniqueness of solutions of \eqref{eq:OurModel}. The first one tells that for high values of the fidelity parameter, the given datum is recovered as unique solution, provided that it is regular enough:
\begin{thm}[cf. \Cref{thm:highFidelity}]
	Let $s\in(0,1)$ and $f\in\Lspace{1}(\Renne)$ with superlevel sets uniformly bounded in $C^{1,1}$ (i.e., there exists $r>0$ such that every superlevel set of $f$ and its complement are the union of balls of radius $r$).
	Then, there exists $\Lambda(f,s)>0$ s.t. $f$ is the unique solution of \eqref{eq:OurModel} for every fidelity parameter $\Lambda\geq\Lambda(f,s)$.
\end{thm}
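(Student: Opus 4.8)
The plan is to decouple the energy in \eqref{eq:OurModel} over the level sets and then apply \Cref{thm:lemmetto per lavoro su variazione seconda} to all the superlevel sets of $f$ simultaneously. Using the coarea formula \eqref{eq:sPerCoareaINTRO} together with the elementary identity $\integraldex{\Renne}{|u-f|}=\integral{\R}\measureofset{\{u>t\}\Delta\{f>t\}}\integralde t$, the functional in \eqref{eq:OurModel} evaluated at any $u\in\Wspace{s}{1}(\Renne)$ can be rewritten as
\[
\integral{\R}\Big(\Ps(\{u>t\})+\Lambda\,\measureofset{\{u>t\}\Delta\{f>t\}}\Big)\integralde t ,
\]
so that it splits level by level; in particular its value at $f$ equals $\integral{\R}\Ps(\{f>t\})\integralde t$. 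Before anything else one should check that $f$ is an admissible competitor, i.e.\ $f\in\Wspace{s}{1}(\Renne)$: this follows from the same formula and the bound $\Ps(E)\le C_{n,s}\,r^{-s}\measureofset{E}$ for $E$ a bounded union of balls of radius $r$, which gives $s$-total variation of $f$ at most $C_{n,s}\,r^{-s}\norm{f}_{\Lspace{1}(\Renne)}$.

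Next I would check that \Cref{thm:lemmetto per lavoro su variazione seconda} is applicable to $\{f>t\}$ for a.e.\ $t$. For $t>0$ one has $\measureofset{\{f>t\}}\le\norm{f}_{\Lspace{1}(\Renne)}/t<\infty$, and a union of balls of radius $r$ with finite measure is contained in a finite union of balls of radius $3r$ by a Vitali-type argument, so $\{f>t\}$ is bounded; symmetrically, $\comp{\{f>t\}}=\{f\le t\}$ is bounded for $t<0$. Since the two-sided $r$-ball condition also gives $\partial\{f>t\}\in C^{1,1}$, for a.e.\ $t$ the set $E=\{f>t\}$ is bounded or has bounded complement with $\partial E\in C^{1,1}$, and \Cref{thm:lemmetto per lavoro su variazione seconda} yields a threshold $\Lambda(\{f>t\},s)$.

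The key point --- and the one demanding real care --- is that this threshold can be taken independent of $t$: it should depend on $E=\{f>t\}$ only through $n$, $s$ and the ball-condition radius $r$, since for a ball $\Lambda(\BallRadius{r},s)=r^{-s}\Lambda(\BallRadius{1},s)$ by scaling, and the comparison argument in the proof of \Cref{thm:lemmetto per lavoro su variazione seconda} that passes from balls to general $C^{1,1}$ sets invokes only the ball condition. Granting a single $\Lambda_0=\Lambda_0(n,s,r)$ with $\Ps(\{f>t\})-\Ps(\U)\le\Lambda_0\,\measureofset{\{f>t\}\Delta\U}$ for every $\U\in\measurablesets$ and a.e.\ $t$, fix $\Lambda\ge 2\Lambda_0$. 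For any competitor $u$ and a.e.\ $t$,
\[
\Ps(\{f>t\})\le\Ps(\{u>t\})+\Lambda_0\,\measureofset{\{u>t\}\Delta\{f>t\}}\le\Ps(\{u>t\})+\Lambda\,\measureofset{\{u>t\}\Delta\{f>t\}} ,
\]
and integrating against the decoupling shows that $f$ is a minimizer. If $u$ is another solution, equality of the two energies forces the nonnegative integrand to vanish a.e., i.e.\ $\Ps(\{f>t\})-\Ps(\{u>t\})=\Lambda\,\measureofset{\{u>t\}\Delta\{f>t\}}$; combined with the $\Lambda_0$-inequality this gives $(\Lambda-\Lambda_0)\,\measureofset{\{u>t\}\Delta\{f>t\}}\le 0$, hence $\measureofset{\{u>t\}\Delta\{f>t\}}=0$ for a.e.\ $t$, and reconstructing $u$ and $f$ from their superlevel sets yields $u=f$ a.e. Thus the claim holds with $\Lambda(f,s):=2\Lambda_0(n,s,r)$.

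The main obstacle is exactly this uniformity: one has to go into the proof of \Cref{thm:lemmetto per lavoro su variazione seconda} and confirm that the threshold it produces is controlled purely in terms of $n$, $s$ and $r$ --- equivalently, by a uniform bound on the nonlocal mean curvature of the superlevel sets of $f$ --- so that a single $\Lambda_0$ works at every level. Everything else (the coarea and layer-cake identities, the level-wise comparison, and the passage from level sets back to functions) is routine.
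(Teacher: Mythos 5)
Your argument is correct and follows essentially the same route as the paper: everything reduces to applying \Cref{thm:lemmetto per lavoro su variazione seconda} uniformly to the superlevel sets of $f$, and the uniformity in $t$ that you rightly single out as the crux is exactly what the paper records in \Cref{rem:lemmetto}, where the threshold is computed explicitly as $\frac{\cn}{\omegan}\rzero^{-s}$, depending only on $n$, $s$ and the ball radius $\rzero$. The only real difference is cosmetic: instead of integrating the level-wise inequality and exploiting the equality case (which obliges you to verify $f\in\Wspace{s}{1}(\Renne)$ beforehand), the paper takes an arbitrary minimizer $u$, applies \Cref{prop:equivalenceFuncGeomPBs}~(ii) to get $\{u>t\}=\{f>t\}$ for all $t\neq 0$, and concludes $u=f$ a.e., so that the finiteness of the energy of $f$ comes for free.
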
 
The second one deals with low values of the fidelity parameter. For compactly supported data, the unique solution is the trivial one: 
\begin{thm}[cf. \Cref{thm:lowFidelity}]
	Let $s\in(0,1)$ and $R>0$ and $f\in\Lspace{1}(\Renne)$ such that $\supp(f)\subset\BR$. Then, there exists $\Lambda(R,n,s)>0$ such that $0$ is the unique solution of \eqref{eq:OurModel} for every fidelity parameter $0<\Lambda<\Lambda(R,n,s)$.
\end{thm}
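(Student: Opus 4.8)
The plan is to reduce the functional minimisation \eqref{eq:OurModel} to a one-parameter family of geometric minimisations, one for each level, following the strategy used for \eqref{eq:ChanEsedogluModel}. The two ingredients are the coarea formula \eqref{eq:sPerCoareaINTRO} and the layer-cake identity
\begin{equation*}
	\integraldex{\Renne}{|u-f|}=\int_{-\infty}^{+\infty}\bigl|\{u>t\}\Delta\{f>t\}\bigr|\integralde t ,\qquad u,f\in\Lspace{1}(\Renne).
\end{equation*}
Setting $E^u_t:=\{u>t\}$ and $F_t:=\{f>t\}$, for every $u\in\Wspace{s}{1}(\Renne)$ these give
\begin{equation*}
	\unmezzo\integral{\Renne}\integral{\Renne}\frac{|u(x)-u(y)|}{|x-y|^{n+s}}\integralde x\integralde y+\Lambda\integraldex{\Renne}{|u-f|}=\int_{-\infty}^{+\infty}\Bigl(\Ps(E^u_t)+\Lambda\bigl|E^u_t\Delta F_t\bigr|\Bigr)\integralde t ,
\end{equation*}
and, since $u\equiv0$ has $E^0_t=\emptyset$ for $t>0$ and $E^0_t=\Renne$ for $t<0$, the right-hand side at $u\equiv0$ equals $\Lambda\norm{f}_{\Lspace{1}(\Renne)}$. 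Hence it suffices to show that, for $\Lambda$ small enough, at a.e.\ level $t$ the set $E^0_t$ is the \emph{unique} minimiser of the geometric problem $\GeometricProblem{F_t}{\Lambda}$, i.e.\ of $E\mapsto\Ps(E)+\Lambda|E\Delta F_t|$: indeed, if $u$ has energy $\le\Lambda\norm{f}_{\Lspace{1}(\Renne)}$, then the nonnegative function $t\mapsto\Ps(E^u_t)+\Lambda|E^u_t\Delta F_t|-\Ps(E^0_t)-\Lambda|E^0_t\Delta F_t|$ has nonpositive integral, hence vanishes for a.e.\ $t$, so $E^u_t=E^0_t$ for a.e.\ $t$, and reconstructing $u$ from its superlevel sets then forces $u=0$ a.e.

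The heart of the matter is the following claim: for measurable $F\subseteq\Renne$ with $|F|\le V<\infty$ there is $\Lambda_0=\Lambda_0(V,n,s)>0$ such that for $0<\Lambda<\Lambda_0$ the empty set is the \emph{unique} minimiser of $E\mapsto\Ps(E)+\Lambda|E\Delta F|$ over $\measurablesets$. To prove it, observe that for $|E|<\infty$ one has $|E\Delta F|=|E|+|F|-2|E\cap F|$, so
\begin{equation*}
	\Ps(E)+\Lambda|E\Delta F|-\Lambda|F|=\Ps(E)+\Lambda|E|-2\Lambda|E\cap F| ,
\end{equation*}
which must be shown to be strictly positive whenever $|E|>0$ (the case $|E|=\infty$ being trivial, since then $|E\Delta F|\ge|E|-|F|=+\infty$). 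Here I would invoke the fractional isoperimetric inequality $\Ps(E)\ge C_{n,s}|E|^{\frac{n-s}{n}}$, balls being the minimisers under a volume constraint. If $|E|\le 2V$ then $|E|^{\frac{n-s}{n}}\ge(2V)^{-s/n}|E|$; combining this with $|E\cap F|\le|E|$ when $|E|\le V$, respectively with $|E\cap F|\le|F|\le V<|E|$ when $V<|E|\le 2V$, one checks in both subcases that the displayed quantity is at least $\bigl(C_{n,s}(2V)^{-s/n}-\Lambda\bigr)\min(|E|,V)$, which is $>0$ as soon as $\Lambda<C_{n,s}(2V)^{-s/n}$. If instead $|E|>2V$ then $2\Lambda|E\cap F|\le 2\Lambda V<\Lambda|E|$, so the expression is at least $\Ps(E)$, which is strictly positive because $0<|E|<\infty$ forces both $E$ and $\comp E$ to have positive measure. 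Thus $\Lambda_0(V,n,s):=C_{n,s}(2V)^{-s/n}$ works.

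It remains to feed in the support hypothesis. For $t>0$ we have $F_t=\{f>t\}\subseteq\BR$ up to a null set, hence $|F_t|\le\omegan R^n=:V$, and the claim gives $E^0_t=\emptyset$ as the unique minimiser of $\GeometricProblem{F_t}{\Lambda}$ for $\Lambda<\Lambda_0(V,n,s)$. For $t<0$, instead, $\comp{F_t}=\{f\le t\}\subseteq\BR$, so $|\comp{F_t}|\le V$; since $E\Delta F_t=\comp E\Delta\comp{F_t}$ and $\Ps(E)=\Ps(\comp E)$, the problem $\GeometricProblem{F_t}{\Lambda}$ is equivalent under complementation to the one with datum $\comp{F_t}$, whose unique minimiser is $\emptyset$, i.e.\ $E=\Renne=E^0_t$. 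Setting $\Lambda(R,n,s):=C_{n,s}(2\omegan R^n)^{-s/n}$, the reduction of the first paragraph then applies at a.e.\ level for every $0<\Lambda<\Lambda(R,n,s)$ and yields that $0$ is the unique solution of \eqref{eq:OurModel}. I expect the main obstacle to be the claim above in the small-volume regime, where $\Ps(E)$ is of lower order than $|E|$ and one genuinely needs the fractional isoperimetric inequality to bound the fractional perimeter below by a multiple of the Lebesgue measure; it is precisely the interplay of that inequality with the \emph{a priori} bound $|F_t|\le\omegan R^n$ that pins down the threshold. The remaining points — measurability and summability of $t\mapsto\Ps(E^u_t)$, the coarea decomposition of the total energy, and the level-by-level reduction to the geometric problems — are routine and should already be available from the earlier sections.
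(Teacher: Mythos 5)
Your proof is correct, but it follows a genuinely different route from the paper's. The paper argues directly at the functional level: comparing the energy of a solution $u$ with that of the competitor $0$ and invoking the fractional Sobolev inequality $\norm{u}_{\Lspace{\frac{n}{n-s}}(\Renne)}\leq C(n,s)\sTotalVar{u}$ together with H\"older's inequality on $\BR$, it absorbs $\Lambda\norm{u}_{\Lspace{1}(\BR)}$ into the total-variation term as soon as $\Lambda<\left(C(n,s)R^{s}\omegan^{s/n}\right)^{-1}$, and the triangle inequality $\norm{f}_{\Lspace{1}(\BR)}\leq\norm{u}_{\Lspace{1}(\BR)}+\norm{u-f}_{\Lspace{1}(\BR)}$ finishes the proof in a few lines, with no level-set decomposition and no appeal to the geometric problems. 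You instead run the coarea/layer-cake reduction of \Cref{prop:equivalenceFuncGeomPBs} and prove, via the fractional isoperimetric inequality, that for any datum $F$ of volume at most $V$ the empty set is the \emph{unique} minimiser of $\sEnergyGeometric{(\cdot;F,\Lambda)}$ whenever $\Lambda$ is below a threshold of order $V^{-s/n}$; your case analysis ($|E|\leq V$, $V<|E|\leq 2V$, $|E|>2V$, $|E|=\infty$) is complete, and the complementation step for negative levels is exactly \Cref{rem:Solutions with complement datum}. The two routes are close relatives --- the isoperimetric inequality is the Sobolev inequality restricted to characteristic functions --- and both yield a threshold of order $R^{-s}$. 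The paper's route buys brevity and a clean explicit constant; yours buys a geometric lemma of independent interest, namely a volume-only lower bound on the smallest $\Lambda$ for which $\GeometricProblem{F}{\Lambda}$ admits a nontrivial solution (for bounded convex $F$ this is a lower bound on the $s$-Cheeger constant $\Cheegs(F)$, cf. \Cref{thm:convexsCheeger} part \ref{item:Sone}), at the price of invoking the equivalence machinery and establishing uniqueness at almost every level.
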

The argument for proving \Cref{thm:lowFidelity} comes from adaptations of strategies presented in \cite{ChaEse}, while the one for \Cref{thm:highFidelity} is more technical, making use of \Cref{thm:lemmetto per lavoro su variazione seconda}.

The final results characterize more precisely solutions in the case of data which are characteristic functions of bounded and convex sets: 
\begin{thm}[cf. \Cref{thm:convexsCheeger}]
	Let $s\in(0,1)$ and $E\subset\Renne$ be a bounded convex set with nonempty interior. Let $\Cheegs(E)$ be the s-Cheeger constant of $E$ and $\sCheegerClass(E)$ be the class of s-Cheeger sets of $E$, together with the empty set.
	Then, \eqref{eq:OurModel}, with datum $f=\characteristic{E}$, admits a unique solution for $\lebone-$a.e. $\Lambda>0$. Furthermore,
	\begin{enumerate}[label=(\roman*)]
		\item\label{item:one} if $0<\Lambda<\Cheegs(E)$, then $0$ is the unique solution;
		\item\label{item:two} if $\Lambda=\Cheegs(E)$, then every function $u\in\Wspace{s}{1}(\Renne;[0,1])$, such that $\{u>t\} \in \sCheegerClass(E)$ for all $t\in [0,1)$,  is a solution;
		\item\label{item:three} if $\Lambda>\Cheegs(E)$, then $0$ is not a solution.
	\end{enumerate}
\end{thm}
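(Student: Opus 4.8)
The plan is to exploit the coarea formula \eqref{eq:sPerCoareaINTRO} to reduce the functional \eqref{eq:OurModel} with datum $\characteristic{E}$ to a one-parameter family of geometric problems, and then to match these against the definition of the $s$-Cheeger constant. First I would recall that for $f=\characteristic{E}$ one may assume, by truncation, that any solution $u$ takes values in $[0,1]$ (replacing $u$ by $\min(\max(u,0),1)$ does not increase either term of the energy), so by coarea
\[
\unmezzo[Du]_s+\Lambda\integraldex{\Renne}{|u-\characteristic{E}|}=\int_0^1\Big(\Ps(\{u>t\})+\Lambda|\{u>t\}\,\Delta\,E|\Big)\integralde t .
\]
Hence $u$ is a solution if and only if for a.e.\ $t\in[0,1)$ the superlevel set $\{u>t\}$ minimizes $U\mapsto\Ps(U)+\Lambda|U\,\Delta\,E|$ among measurable sets; and since $E$ is a competitor, the minimal value is $\le\Ps(E)$. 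The key observation is that for such a geometric minimizer $U$ one can show $U\subseteq E$ up to null sets (cutting the part outside $E$ strictly decreases $\Lambda|U\Delta E|$ by submodularity/convexity of $\Ps$ it does not increase the perimeter — this is the standard argument that the ``outer part'' can be removed), so the relevant competition is really
\[
\min_{U\subseteq E}\ \Ps(U)+\Lambda|E\setminus U|\ =\ \Ps(E)+\min_{U\subseteq E}\big(\Ps(U)-\Ps(E)+\Lambda|E\setminus U|\big),
\]
and one rewrites $\Ps(U)-\Ps(E)=\Ps(U)-\Ps(E)$; using the nonlocal submodularity $\Ps(U)-\Ps(U\cap E)\ge\Ps(U\cup E)-\Ps(E)$ together with $U\subseteq E$ one gets that this last minimization is governed exactly by the quantity $\inf_{U\subseteq E,\,|U|>0}\Ps(U)/|U|=\Cheegs(E)$.

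With that reduction in hand, the three cases follow. For \ref{item:one}, if $\Lambda<\Cheegs(E)$ then for every $U\subseteq E$ with $|U|>0$ we have $\Ps(U)>\Lambda|U|\ge\Lambda|U|$, hence $\Ps(U)+\Lambda|E\setminus U|>\Lambda|E|=\Ps(\emptyset)+\Lambda|E\setminus\emptyset|$ — wait, more precisely $\Ps(U)+\Lambda|E\setminus U|-\big(\Lambda|E|\big)=\Ps(U)-\Lambda|U|>0$, so $U=\emptyset$ strictly beats every nonempty competitor; therefore a.e.\ superlevel set of any solution is empty, i.e.\ $u=0$, and $0$ is indeed a solution since its energy is $\Lambda|E|$ which is now the minimum. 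For \ref{item:two}, at $\Lambda=\Cheegs(E)$ the quantity $\Ps(U)-\Cheegs(E)|U|$ is $\ge0$ for all $U\subseteq E$ and vanishes exactly on $s$-Cheeger sets (and on $\emptyset$); thus $\emptyset$ and every $s$-Cheeger set are geometric minimizers with value $\Cheegs(E)|E|$, and hence any $u\in\Wspace{s}{1}(\Renne;[0,1])$ whose superlevel sets $\{u>t\}$ lie in $\sCheegerClass(E)$ for all $t$ is a solution — one must check the measurability/monotonicity constraints on such a layer-cake function are compatible (e.g.\ the $s$-Cheeger set of a convex body is unique and convex, so $u=\characteristic{C}$ for the Cheeger set $C$ is the canonical example, but multiplying by constants or taking $u$ supported on a nested family also works). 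For \ref{item:three}, if $\Lambda>\Cheegs(E)$ pick any $s$-Cheeger set $C$ (nonempty, $|C|>0$); then $\Ps(C)+\Lambda|E\setminus C|=\Cheegs(E)|C|+\Lambda|E|-\Lambda|C|=\Lambda|E|-(\Lambda-\Cheegs(E))|C|<\Lambda|E|$, so $\characteristic{C}$ has strictly smaller energy than $0$, whence $0$ is not a solution.

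For the uniqueness statement ``for $\lebone$-a.e.\ $\Lambda>0$'', the plan is a monotonicity-in-$\Lambda$ argument: by the reduction above a solution is essentially determined by the minimizer of $U\mapsto\Ps(U)+\Lambda|U\Delta E|$ over $U\subseteq E$, and these minimizers form a nested family as $\Lambda$ increases (a standard consequence of submodularity: if $U_\Lambda$ is optimal for $\Lambda$ and $\Lambda'>\Lambda$ then one can take $U_{\Lambda'}\subseteq U_\Lambda$). Nonuniqueness at a given $\Lambda$ forces the minimal-energy value, as a function of $\Lambda$, to have a ``corner'' in the sense that two distinct sets of different measure are simultaneously optimal; since the value function $\Lambda\mapsto\min_U\big(\Ps(U)+\Lambda|U\Delta E|\big)$ is concave (an infimum of affine functions of $\Lambda$), it is differentiable off a countable set, and at points of differentiability the optimal $|U\Delta E|$ — equivalently $|U|$ since $U\subseteq E$ — is unique; combined with the nestedness this pins $U$ uniquely up to a null set, hence $u$ is unique. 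The main obstacle I anticipate is precisely this last step: justifying rigorously that ``the same value of $|U|$'' together with nestedness gives the \emph{same} set (one needs that among nested minimizers sharing a volume there is only one, which uses strict monotonicity of $\Lambda\mapsto|U_\Lambda|$ on the complement of the corner set and a careful treatment of the at-most-countably-many exceptional $\Lambda$), and in making the passage between the functional minimizers and the geometric ones fully rigorous at every level $t$ simultaneously (a measurable-selection / coarea bookkeeping issue). The case-analysis parts \ref{item:one}--\ref{item:three} are, by contrast, essentially immediate once the Cheeger reduction is set up.
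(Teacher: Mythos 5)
Your reduction and the case analysis \ref{item:one}--\ref{item:three} follow essentially the same route as the paper: using convexity of $E$ to restrict the geometric problem to competitors $U\subseteq E$, rewriting the energy as $\Ps(U)-\Lambda|U|$ plus the constant $\Lambda|E|$, and comparing $\Lambda$ with $\Cheegs(E)=\inf_{U\subseteq E}\Ps(U)/|U|$; your direct strict inequality in \ref{item:one} is in fact slightly cleaner than the paper's detour through $\Lambda_*:=\sup\{\Lambda:0\in\FunctionalProblemSolutions{\characteristic{E},\Lambda}\}$. Two points deserve correction or completion. First, your parenthetical assertion that ``the $s$-Cheeger set of a convex body is unique and convex'' is not known in the fractional setting --- the paper explicitly flags this as open --- but nothing in \ref{item:two} depends on it, since the statement only asks that every $u$ with superlevel sets in $\sCheegerClass(E)$ be a solution, which follows once $\GeometricProblemSolutions{E,\Cheegs(E)}=\sCheegerClass(E)$ is established. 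Second, for the $\lebone$-a.e.\ uniqueness you correctly identify the remaining gap (same optimal volume plus nestedness does not by itself pin down the set); the paper closes it differently and more simply: the quantities $\distSolDatPLUSMINUS(\Lambda)=\sup/\inf\{\|u-\characteristic{E}\|_{\Lspace{1}}\}$ over solutions are monotone in $\Lambda$, hence coincide off a countable set; at such $\Lambda$ any two distinct nested superlevel sets of a solution (both contained in $E$) would have different $\Lspace{1}$-distances to $E$, forcing every solution to be binary, and then convexity of the solution set (the average of two binary solutions is again a solution, hence binary) forces them to coincide. Your concavity-of-the-value-function argument can be made to work via the stability of geometric minimizers under intersection and union (submodularity), but that step needs to be spelled out rather than left as an anticipated obstacle.
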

\begin{thm}[cf. \Cref{thm:convexsCalibrable}]
	Let $s\in(0,1)$ and $E\subset\Renne$ be a bounded convex set which is a s-Cheeger set for itself. 
	Then, $\characteristic{E}$ is the unique solution for \eqref{eq:OurModel}, with datum $f=\characteristic{E}$, for all fidelity parameters $\Lambda>\Cheegs(E)$.
\end{thm}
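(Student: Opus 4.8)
The plan is to convert \eqref{eq:OurModel} with datum $\characteristic{E}$ into a one-parameter family of geometric minimum problems, using truncation and the coarea formula \eqref{eq:sPerCoareaINTRO}, and then to extract uniqueness from the hypothesis that $E$ is an $s$-Cheeger set for itself by feeding in part~(ii) of \Cref{thm:convexsCheeger}. For measurable $U\subseteq\Renne$ set $\mathcal{F}_{\Lambda}(U):=\Ps(U)+\Lambda\,|U\Delta E|$; this is exactly the energy in \eqref{eq:OurModel}, with datum $\characteristic{E}$, of the competitor $\characteristic{U}$, since the fractional $s$-total variation of $\characteristic{U}$ equals $\Ps(U)$. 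Note $\characteristic{E}\in\Wspace{s}{1}(\Renne)$, with $\mathcal{F}_{\Lambda}(E)=\Ps(E)<\infty$ because $E$ is bounded and convex.

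First I would reduce to $[0,1]$-valued functions. For $u\in\Wspace{s}{1}(\Renne)$, its truncation $\tilde u:=\max\{0,\min\{1,u\}\}$ has superlevel sets among $\{\,\{u>t\}:t\in[0,1)\,\}\cup\{\Renne,\emptyset\}$, so by \eqref{eq:sPerCoareaINTRO} its fractional $s$-total variation does not exceed that of $u$, while $|\tilde u-\characteristic{E}|\le|u-\characteristic{E}|$ pointwise, strictly on $\{u<0\}\cup\{u>1\}$; thus the energy of $\tilde u$ is at most that of $u$, strictly if $u\notin[0,1]$ on a positive measure set, and in particular every solution of \eqref{eq:OurModel} is $[0,1]$-valued. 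For $[0,1]$-valued $u$, combining \eqref{eq:sPerCoareaINTRO} with the layer-cake identity $\int_{\Renne}|u-\characteristic{E}|\integralde x=\int_0^1|\{u>t\}\Delta E|\integralde t$ gives that the energy of $u$ equals $\int_0^1\mathcal{F}_{\Lambda}(\{u>t\})\integralde t$.

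The heart of the matter is the geometric claim: \emph{for every $\Lambda>\Cheegs(E)$, the set $E$ is the unique minimizer of $\mathcal{F}_{\Lambda}$ among measurable subsets of $\Renne$.} Since $E$ is an $s$-Cheeger set for itself, $E\in\sCheegerClass(E)$, so part~(ii) of \Cref{thm:convexsCheeger} applied to $u=\characteristic{E}$ gives that $\characteristic{E}$ solves \eqref{eq:OurModel}, with datum $\characteristic{E}$, at the critical value $\Lambda=\Cheegs(E)$. Testing this against the competitors $\characteristic{U}$ (the energy being automatically $+\infty$ when $|U|=\infty$, as then $|U\Delta E|=\infty$) yields $\mathcal{F}_{\Cheegs(E)}(U)\ge\mathcal{F}_{\Cheegs(E)}(E)=\Ps(E)$ for every measurable $U$, i.e.\ $E$ minimizes $\mathcal{F}_{\Cheegs(E)}$. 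With the elementary identity $\mathcal{F}_{\Lambda}(U)=\mathcal{F}_{\Cheegs(E)}(U)+(\Lambda-\Cheegs(E))\,|U\Delta E|$, any minimizer $U$ of $\mathcal{F}_{\Lambda}$ satisfies $\Ps(E)=\mathcal{F}_{\Lambda}(U)\ge\Ps(E)+(\Lambda-\Cheegs(E))\,|U\Delta E|$, forcing $|U\Delta E|=0$ whenever $\Lambda>\Cheegs(E)$; this proves the claim. To finish, fix $\Lambda>\Cheegs(E)$: for every $u\in\Wspace{s}{1}(\Renne)$, reducing to the $[0,1]$-valued case, the energy identity and $\mathcal{F}_{\Lambda}(\{u>t\})\ge\Ps(E)$ show that the energy of $u$ is at least $\Ps(E)$, the energy of the admissible competitor $\characteristic{E}$, so $\characteristic{E}$ is a solution; conversely, any solution $u$ is $[0,1]$-valued with energy $\Ps(E)$, so $\mathcal{F}_{\Lambda}(\{u>t\})=\Ps(E)$ for a.e.\ $t\in(0,1)$, and the geometric claim gives $\{u>t\}=E$ for a.e.\ $t$, hence $u=\characteristic{E}$ a.e.

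I expect no serious obstacle, as the statement is essentially a corollary of \Cref{thm:convexsCheeger}; the points deserving care are the truncation step (used both to restrict to $[0,1]$-valued competitors and to get the strict inequality ruling out non-admissible solutions) and the coarea bookkeeping relating \eqref{eq:OurModel} to $\mathcal{F}_{\Lambda}$. If one preferred not to invoke \Cref{thm:convexsCheeger}, the only step needing an independent argument is that $E$ minimizes $\mathcal{F}_{\Cheegs(E)}$, which would follow from the self-$s$-Cheeger inequality $\Ps(F)\ge\Cheegs(E)\,|F|$ for $F\subseteq E$ together with the submodularity of $\Ps$ and the splitting of $\Ps(U)$ along $\partial E$ into the contributions of the inner part $U\cap E$ and the outer part $U\setminus E$ — that decomposition being where the genuine technical work would lie.
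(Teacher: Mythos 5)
Your proposal is correct and follows essentially the same route as the paper: reduce to the geometric problem via coarea/layer-cake, use $s$-calibrability to see that $E$ minimizes at the threshold $\Lambda=\Cheegs(E)$, and observe that for $\Lambda>\Cheegs(E)$ the extra term $(\Lambda-\Cheegs(E))\,|U\Delta E|$ strictly penalizes any competitor with $|U\Delta E|>0$. The only (cosmetic) difference is that you work with $\mathcal{F}_{\Lambda}=\mathcal{F}_{\Cheegs(E)}+(\Lambda-\Cheegs(E))|\cdot\,\Delta E|$ over all measurable sets, importing minimality at the threshold from \Cref{thm:convexsCheeger}\ref{item:Stwo}, whereas the paper first restricts to competitors $V\subseteq E$ and runs the equivalent computation on $\Ps(V)-\Lambda|V|$.
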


The statements and proofs of \Cref{thm:convexsCheeger} and \Cref{thm:convexsCalibrable} require the notion of \textit{s-Cheeger set} (cf. \Cref{def:sCheeger}). We remark that the authors in \cite{MazSolTol} provided results in a similar fashion, after introducing a suitable notion of Cheeger set  \cite[see][Corollary 3.23 and Corollary 3.25]{MazSolTol}.

Finally, as a consequence of \Cref{thm:convexsCalibrable} and \Cref{cor:regularity}, we prove:

\begin{thm}[cf. \Cref{cor:ConvexSCalibrableRegular}]
	Let $s\in(0,1)$ and $E\subset\Renne$ be a bounded convex set which is $s$-calibrable, i.e. it is a s-Cheeger set of itself. Then, $\partial E$ is of class $C^1$. 
\end{thm}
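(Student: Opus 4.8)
The plan is to feed the explicit solution $\characteristic{E}$ of \eqref{eq:OurModel} into the two quoted results and then use convexity to kill the singular set. First, since $E$ is $s$-calibrable, \Cref{thm:convexsCalibrable} gives that $\characteristic{E}$ is the unique solution of \eqref{eq:OurModel} with datum $f=\characteristic{E}$ for every $\Lambda>\Cheegs(E)$; fix one such $\Lambda$. As $E$ is a bounded convex set with nonempty interior we have $\characteristic{E}\in\Wspace{s}{1}(\Renne)$, so \Cref{cor:regularity} applies and yields that every superlevel set $\{\characteristic{E}>t\}$ has boundary of class $C^1$ outside a closed singular set of Hausdorff dimension at most $n-3$. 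For $t\in[0,1)$ that superlevel set is exactly $E$, hence $\partial E$ is of class $C^1$ outside a closed set $S\subseteq\partial E$ with $\dim_{\mathcal H}S\le n-3$, and it only remains to prove $S=\emptyset$.

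Next I would invoke, from the regularity machinery behind \Cref{cor:regularity} (the theory of local almost-minimizers of the fractional perimeter, cf.\ \cite{CesNov17}), that $\partial E$ is a local almost-minimizer of $\Ps$, that $S$ is exactly the set of points where no blow-up of $\partial E$ is a hyperplane, and that at each $x_{0}\in S$ some blow-up of $\partial E$ is a nontrivial $\Ps$-minimizing cone. On the convex side, for any $x_{0}\in\partial E$ the rescalings $(E-x_{0})/r$ converge, as $r\downarrow 0$, to the tangent cone $C:=C(E,x_{0})$, a closed convex cone which is proper ($C\neq\Renne$) because $x_{0}\in\partial E$; therefore the blow-up of $\partial E$ at $x_{0}$ is unique, equals $\partial C$, and is a hyperplane precisely when $\partial E$ is differentiable at $x_{0}$. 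Thus $x_{0}\in S$ would force $C$ to be a proper convex cone that is not a half-space.

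The heart of the matter is then to show that the boundary of such a $C$ cannot be $\Ps$-minimizing. A proper closed convex cone is contained in a closed half-space through its apex, with the same solid angle only when it coincides with that half-space; so at any smooth point $x\in\partial C\setminus\{0\}$, with outer unit normal $\nu$, convexity gives $C\subseteq\{z:(z-x)\cdot\nu\le 0\}$, the inclusion being strict on a set of positive measure. Consequently the nonlocal $s$-mean curvature
\[
H_{s}(x)=\mathrm{p.v.}\integral{\Renne}\bigl(\characteristic{\comp{C}}(y)-\characteristic{C}(y)\bigr)\,|x-y|^{-n-s}\,\de y
\]
is \emph{strictly} positive, since its integrand dominates pointwise the one associated with the supporting half-space — whose principal value vanishes by odd symmetry — and is strictly larger on a set of positive measure. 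Hence $\partial C$ is not $\Ps$-stationary, a fortiori not $\Ps$-minimizing, contradicting the previous paragraph. Therefore $S=\emptyset$, i.e.\ $\partial E$ is of class $C^{1}$.

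I expect the main obstacle to lie in the middle step: extracting from (the proof of) \Cref{cor:regularity} that $\partial E$ is a local almost-minimizer of $\Ps$ whose singular points carry $\Ps$-minimizing blow-up cones, and identifying those blow-ups with the convex-geometric tangent cone — the rest being soft, as the exclusion of non-flat convex minimal cones follows at once from the sign of $H_{s}$. A self-contained variant, relying only on $s$-calibrability, is a direct competitor estimate: were $x_{0}\in\partial E$ a non-differentiability point, then $F:=E\setminus\BallRadiusCenter{\rho}{x_{0}}$ would satisfy $\Ps(F)-\Ps(E)\le -c\,\rho^{\,n-s}$ for small $\rho$ (near $x_{0}$ the complement $\comp{E}$ strictly contains a half-space, so cutting the corner strictly lowers the nonlocal perimeter) while $|E\setminus F|=O(\rho^{\,n})$, whence $\Ps(F)/|F|<\Ps(E)/|E|=\Cheegs(E)$ for $\rho$ small — impossible, since $E$ is an $s$-Cheeger set of itself. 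Either way, corners are incompatible with $s$-calibrability, and convexity then upgrades ``differentiable everywhere'' to ``$C^{1}$''.
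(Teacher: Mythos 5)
Your main argument is essentially the paper's: combine \Cref{thm:convexsCalibrable} with \Cref{cor:regularity}, blow up at a putative singular point, use convexity to identify the blow-up with the convex tangent cone (which is a $\Ps$-minimizing cone by the standard compactness/monotonicity theory, as in \cite{CapGui}), and then rule out non-flat convex minimal cones. The only real divergence is in that last step: the paper cites \cite[Theorem 1.1]{FigVal}, whereas you argue directly that a proper convex cone $C$ which is not a half-space has strictly positive nonlocal $s$-mean curvature at smooth boundary points, by comparison with the supporting half-space (whose principal value vanishes by symmetry, the difference being $2\int_{H\setminus C}|x-y|^{-n-s}\,\de y>0$). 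This is correct and self-contained provided you take ``smooth point'' to mean a \emph{regular} point of the minimizing cone, which exists in abundance by the partial regularity theory and at which the Euler--Lagrange equation $H_s=0$ holds classically; for a general convex cone the principal value need not converge at a merely differentiable boundary point, so this qualification matters.

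The ``self-contained variant'' at the end, however, is not as soft as you suggest. Writing $F=E\setminus\BallRadiusCenter{\rho}{x_0}$ and $A=E\cap\BallRadiusCenter{\rho}{x_0}$, one has $\Ps(F)-\Ps(E)=\interaction(A,E\setminus A)-\interaction(A,\comp{E})$, and at a \emph{flat} boundary point this quantity is \emph{nonnegative} (indeed of order $+c\,\rho^{n-s}$): minimality of the half-space $H$ forces $\interaction(H\cap B_\rho,\comp{H})\le \interaction(H\cap B_\rho,H\setminus B_\rho)$, i.e.\ excising a half-ball \emph{raises} the fractional perimeter. So the parenthetical justification (``cutting the corner strictly lowers the nonlocal perimeter'') is not a pointwise-domination statement; to get $\Ps(F)-\Ps(E)\le -c\rho^{n-s}$ you must show that the extra interaction coming from the wedge $H\setminus C$ outweighs this half-space deficit, which is essentially equivalent to the cone classification you were trying to avoid. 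As written, that variant has a genuine gap; the blow-up argument is the one to keep.
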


The plan of the article is the following.

In \Cref{sec:notation} we introduce the notation that we will use throughout the work and we recall some needed results, which are well-known in the literature.

In \Cref{sec:existenceProperties} we first study the existence of solutions for the model \eqref{eq:OurModel} together with its basic properties, such as stability results. 
Then, we translate the problem \eqref{eq:OurModel} into a geometric one, giving a representation of the energy of the model \eqref{eq:OurModel}, via \textit{coarea} and \textit{layer-cake} formulas, which allows to establish equivalence between the original problem and its geometric counterpart (cf. \Cref{prop:equivalenceFuncGeomPBs}). Subsequently, we prove regularity for the sublevel and superlevel sets of solutions. 

In \Cref{sec:lemmetto}, we start focusing on \eqref{eq:OurModel} in the binary case, i.e. when the datum of the problem is a characteristic function of some measurable set, emphasizing the case where the set is bounded and convex.
Then, we prove \Cref{thm:lemmetto per lavoro su variazione seconda} and later we study how the size of the \text{fidelity} parameter can influence the behaviour of solutions of \eqref{eq:OurModel}.
Finally, after a brief recall on \textit{s-Cheeger sets}, we go back to the binary datum case, highlighting further features of the convex case.

\section{Notation and preliminary results}\label{sec:notation}

Let $n$ be an integer strictly greater than zero. We let
\begin{align*}
&\lebn := n-\text{dimensional Lebesgue (external) measure};\\
&\measurablesets := \text{Lebesgue measurable sets of } \Renne;\\
&\Hnmenouno:= (n-1)-\text{dimensional Hausdorff (external) measure}.
\end{align*}

Let $E,F\in\measurablesets$, $s\in(0,1)$, $R,m>0$, $x\in\Renne$. Then,
\begin{align*}
&\comp{E}:=\Renne\setminus E;\\
&|E|:=\lebn(E);\\
&\partial E:= \lebn-\text{measure theoretic boundary of }E;\\
&\Ps(E) := s-\text{fractional perimeter  of } E \text{ }(\text{see } \eqref{eq:sFracPerDef});\\
&\Emini \text{ and } \Emaxi \text{ are defined in \Cref{defn:MaxMinSolutions}};\\
&\BallRadiusCenter{R}{x}:=\text{ open ball in } \Renne \text{ of radius } R \text{ centred in } x;\\
&\BallRadius{R}:=\BallRadiusCenter{R}{0};\\
&\BallVolumeCenter{m}{x}:=\text{open ball in } \Renne \text{ of volume } m \text{ centred in } x;\\
&\BallVolume{m}:=\BallVolumeCenter{m}{0};\\
&\cn:=\Ps(\BallRadius{1});\\
&\omegan:=|\BallRadius{1}|.
\end{align*}

We also write $E=F$ when $|E\Delta F|=0$; $E\subseteq F$  if $|E\setminus F|=0$; and $E\subsetneqq F$ when $|E\setminus F|=0$ and $|F\setminus E|>0$. Moreover, $E\sqcup F$ is used to denote $E\cup F$ when $E\cap F=\emptyset$.

If $U\in\measurablesets$, by $U\in \Lspace{1}(\Renne)$ we mean $\characteristic{U}\in\Lspace{1}(\Renne)$ . Similarly, if $\Uk\in\Lspace{1}(\Renne)$ for every $k\in\N$, we write  $\Uk\to U$ in $\Lspace{1}(\Renne)$  to denote $\characteristic{\Uk}\to \characteristic{U}$ in $\Lspace{1}(\Renne)$. We adopt the same convention also for $\Llocspace{1}(\Renne)$.

Let $\Omega$ be an open subset of $\Renne$ and $u:\Omega\to [-\infty,+\infty]$ a $\lebn$-measurable function. If $u\in\Wspace{s}{1}(\Omega)$, we denote the \textit{Gagliardo seminorm} in of $u$ by 
\begin{align*}
[u]_{\Wspace{s}{1}(\Omega)}:=\integral{\Omega}\integral{\Omega}\frac{|u(x)-u(y)|}{|x-y|^{n+s}}\integralde x\integralde y.
\end{align*}
For the definitions and properties of fractional Sobolev spaces $\Wspace{s}{p}$, we refer to \cite{DiNPalVal}.
If $u\in\Llocspace{1}(\Renne)$ we define the $s$-\textit{total variation} of $u$ over $\Renne$ by
\begin{align*}
\sTotalVar{u}:=\unmezzo\integral{\Renne}\integral{\Renne}\frac{|u(x)-u(y)|}{|x-y|^{n+s}}\integralde x\integralde y.
\end{align*}
Furthermore, we put
\begin{align*}
&u^+:=\max\{u,0\}, &u^-:=\max\{-u,0\}.
\end{align*}
If $t\in\R$, we let 
\begin{align*}
	\{u>t\}:=\Set{x\in\Renne}{u(x)>t},
\end{align*}
and we adopt an analogue convention for $\{u<t\},\{u\geq t\},\{u \leq t\}$. 

If a function or a set depend on parameters we sometimes omit one or more of them to avoid heavy notation, if they are clear from the context. 

By saying that a real function $f$ is \textit{binary}, we mean that it is the characteristic function of some $\lebn$-measurable set.

Let $E,F\in\measurablesets$, $s\in (0,1)$, $\lambda>0$ and $K\subseteq\Renne$ convex. Then,
\begin{align*}
&\Ps(E)=\sTotalVar{\characteristic{E}};\\
&\Ps(E)=\Ps(\comp{E});\\
&\Ps(\lambda E)=\lambda^{n-s}\Ps(E) \textit{ (Scaling)};\\
&\Ps(E\cap F)+\Ps(E\cup F)\leq\Ps(E)+\Ps(F)\textit{ (Submodularity)};\\
&\Ps(E)<+\infty, \text{ if } |E|<+\infty \text{ and } \Hnmenouno(\partial E)<+\infty;\\
&\Ps(E\cap K)\leq\Ps(E), \text{ if } |E|<+\infty.
\end{align*}
Some of the previous properties are straightforward consequences of the definitions. The proofs of the last three of them can be found in \cite[Proposition 2.2, Remark 2.1]{CesNov} and \cite[Lemma B.1]{FigFusMagMilMor}. 

The following \textit{coarea formula} is proved (even for more general kernels) in \cite[Proposition 2.3]{CesNov}:
\begin{align}\label{eq:sPerCoarea}
\sTotalVar{u}=\int_{-\infty}^{+\infty}\Ps(\{u>t\})\integralde t,
\end{align}
with $u\in\Llocspace{1}(\Renne)$.

We also recall that $u\mapsto\sTotalVar{u}$ is a convex and lower semicontinuous functional in $\Llocspace{1}(\Renne)$.

We define the measure $\interaction$ on $\Rtwon$ by
\begin{align}
\label{eq:interaction measure}
\de\interaction(x,y):=\frac{1}{|x-y|^{n+s}}\de\leb^{2n}(x,y),
\end{align}
so that $\interaction(E\times\comp{E})=\interaction(\comp{E}\times E)=\Ps(E)$.
We also note that $\interaction(E\times F)=\interaction(F\times E)$, $\interaction<<\leb^{2n}$ and $\leb^{2n}<<\interaction$, i.e. each measure is absolutely continuous with respect to the other.

\section{Existence and regularity of minimizers}\label{sec:existenceProperties}

We first show existence of solutions for the model \eqref{eq:OurModel}.
We give some preliminary definitions.
\begin{defn}\label{def:sEnergy}
	Let $\Lambda>0$, $s\in(0,1)$ and $f\in\Llocspace{1}(\Renne)$. We define $\sEnergyLOneFracTV=\sEnergyLOneFracTV(\cdot;f,\Lambda):\Llocspace{1}(\Renne)\to[0,+\infty]$, the functional (or energy) associated to the problem $\eqref{eq:OurModel}$ by
	\begin{align*}
	\sEnergyLOneFracTV{(u)}=\sEnergyLOneFracTV{(u;f,\Lambda)}:=\sTotalVar{u}+\Lambda\int_{\Renne}|u-f|\integralde x.
	\end{align*}
	Furthermore, 
	\ref{Pb:FunctionalProblem} refers to the following  minimum problem:
	\begin{align}\label{Pb:FunctionalProblem}\tag*{$\FunctionalProblem{f}{\Lambda}$}
	\min_{u\in\Llocspace{1}(\Renne)}\sEnergyLOneFracTV{(u;f,\Lambda)}.
	\end{align}
\end{defn}
\begin{remark}
	The problem \ref{Pb:FunctionalProblem} is convex, but not strictly convex.
\end{remark}
\begin{defn}
	Let $\Lambda>0$, $s\in(0,1)$ and $f\in\Llocspace{1}(\Renne)$.
	We say that $u\in\Llocspace{1}(\Renne)$ is a solution of \ref{Pb:FunctionalProblem} if and only if:
	\begin{enumerate}
		\item $\sEnergyLOneFracTV{(u;f,\Lambda)}<+\infty$;
		\item $\sEnergyLOneFracTV{(u;f,\Lambda)}\leq \sEnergyLOneFracTV{(w;f,\Lambda)}$ for all $w\in\Llocspace{1}(\Renne)$.
	\end{enumerate}
	We denote the (possibly empty) set of solutions of \ref{Pb:FunctionalProblem} by $\FunctionalProblemSolutions{f,\Lambda}$.
\end{defn}

We are now ready to prove the first existence result, that is existence for a datum in $\Lspace{1}(\Renne)$.
\begin{prop}\label{prop:existenceL1Datum}Let $s\in(0,1)$.
	For every $\Lambda>0$ and $f\in\Lspace{1}(\Renne)$, the problem
	\ref{Pb:FunctionalProblem} admits solutions and they belong to $\Wspace{s}{1}(\Renne)$.
\end{prop}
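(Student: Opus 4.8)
The plan is to use the direct method of the calculus of variations in $\Llocspace{1}(\Renne)$; the only care needed is that $\Renne$ is unbounded, so one works with local $\Lspace{1}$ convergence and lower semicontinuity rather than strong $\Lspace{1}$ convergence.

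Since $f\in\Lspace{1}(\Renne)$, taking $u\equiv 0$ gives $\sEnergyLOneFracTV{(0;f,\Lambda)}=\Lambda\norm{f}_{\Lspace{1}(\Renne)}<+\infty$, so the infimum $\mu$ of \ref{Pb:FunctionalProblem} is finite. Let $\{u_k\}\subset\Llocspace{1}(\Renne)$ be a minimizing sequence; discarding finitely many terms we may assume $\sEnergyLOneFracTV{(u_k;f,\Lambda)}\le\mu+1$ for all $k$. Since both summands are nonnegative, $\sTotalVar{u_k}\le\mu+1$ and $\norm{u_k-f}_{\Lspace{1}(\Renne)}\le(\mu+1)/\Lambda$, hence $\norm{u_k}_{\Lspace{1}(\Renne)}\le\norm{f}_{\Lspace{1}(\Renne)}+(\mu+1)/\Lambda=:C$.

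For compactness, fix $R>0$. Restricting the double integral in the Gagliardo seminorm to $\BR\times\BR$ and the $\Lspace{1}$ norm to $\BR$, we obtain $[u_k]_{\Wspace{s}{1}(\BR)}\le 2\sTotalVar{u_k}\le 2(\mu+1)$ and $\norm{u_k}_{\Lspace{1}(\BR)}\le C$, so $\{u_k\}$ is bounded in $\Wspace{s}{1}(\BR)$. Because $s\in(0,1)$, the embedding $\Wspace{s}{1}(\BR)\compactlyimmersed\Lspace{1}(\BR)$ is compact (see, e.g., \cite[Theorem 7.1]{DiNPalVal}). A diagonal argument over $R=m\in\N$ yields a subsequence (not relabelled) and $u\in\Llocspace{1}(\Renne)$ with $u_k\to u$ in $\Lspace{1}(\BallRadius{m})$ for every $m$, hence in $\Llocspace{1}(\Renne)$; along a further subsequence $u_k\to u$ $\lebn$-a.e. in $\Renne$.

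Now pass to the limit. By lower semicontinuity of $v\mapsto\sTotalVar{v}$ under $\Llocspace{1}(\Renne)$ convergence (recalled in \Cref{sec:notation}), $\sTotalVar{u}\le\liminf_k\sTotalVar{u_k}$; and since $|u_k-f|\to|u-f|$ $\lebn$-a.e., Fatou's lemma gives $\integraldex{\Renne}{|u-f|}\le\liminf_k\integraldex{\Renne}{|u_k-f|}$. Adding these, $\sEnergyLOneFracTV{(u;f,\Lambda)}\le\liminf_k\sEnergyLOneFracTV{(u_k;f,\Lambda)}=\mu$, and since $u\in\Llocspace{1}(\Renne)$ this is an equality, so $u$ solves \ref{Pb:FunctionalProblem}. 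Finally, any solution $w$ satisfies $\sEnergyLOneFracTV{(w;f,\Lambda)}<+\infty$, whence $\sTotalVar{w}<+\infty$ and $\norm{w-f}_{\Lspace{1}(\Renne)}<+\infty$; as $f\in\Lspace{1}(\Renne)$ this forces $w\in\Lspace{1}(\Renne)$, and with $[w]_{\Wspace{s}{1}(\Renne)}=2\sTotalVar{w}<+\infty$ we conclude $w\in\Wspace{s}{1}(\Renne)$. The delicate point is the compactness step: on the unbounded domain $\Renne$ one cannot hope for strong $\Lspace{1}$ convergence of the minimizing sequence (mass may escape to infinity), but local $\Lspace{1}$ convergence together with Fatou's lemma for the fidelity term is enough, precisely because \ref{Pb:FunctionalProblem} is minimized over $\Llocspace{1}(\Renne)$.
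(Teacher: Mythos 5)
Your proof is correct and follows essentially the same route as the paper: the direct method with compactness from the embedding $\Wspace{s}{1}\compactlyimmersed\Llocspace{1}(\Renne)$ (via \cite[Theorem 7.1]{DiNPalVal}) and lower semicontinuity of both terms under $\Llocspace{1}$ convergence via Fatou's lemma. The paper's proof is a terse sketch of exactly this argument; you have simply filled in the details (the diagonal extraction, the a.e.\ convergence for the fidelity term, and the final observation that finiteness of the energy together with $f\in\Lspace{1}(\Renne)$ forces any solution into $\Wspace{s}{1}(\Renne)$).
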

\begin{proof}
	By Fatou's Lemma, the functional $\sEnergyLOneFracTV$ is $\Llocspace{1}(\Renne)$-sequentially lower semicontinuous. Moreover, its sequential compactness comes from the compact Sobolev embedding of $\Wspace{s}{1}(\Renne)$ into $\Llocspace{1}(\Renne)$ \cite[It is a consequence of][Theorem 7.1]{DiNPalVal}. This leads to the existence of a solution in $\Llocspace{1}(\Renne)$, which is easily seen to belong to $\Wspace{s}{1}(\Renne)$, thanks to the integrability of the datum $f$.
\end{proof}

\begin{remark}\label{rem:ConvexityFunctionalProblemSolutions}
	In view of \Cref{prop:existenceL1Datum}$, \emptyset\neq\FunctionalProblemSolutions{f,\Lambda}\subset \Wspace{s}{1}(\Renne)$ if $s\in(0,1)$, $f\in\Lspace{1}(\Renne)$ and $\Lambda>0$.
	Furthermore, the convexity of the energy $\sEnergyLOneFracTV{}$ makes the set $\FunctionalProblemSolutions{f,\Lambda}$ convex, and in general its cardinality could be strictly greater than one (see \Cref{thm:convexsCheeger}). Since $\sEnergyLOneFracTV{}$ is $\Llocspace{1}(\Renne)$-lower semicontinuous, $\FunctionalProblemSolutions{f,\Lambda}$ is closed in $\Llocspace{1}(\Renne)$.   
\end{remark}

Now we start highlighting some properties of the problem \ref{Pb:FunctionalProblem}.
The first one is a stability result which shows the behaviour of solutions of a sequence of problems with data converging to a certain function in $\Lspace{1}(\Renne)$.

\begin{prop}[Stability 1]\label{prop:stabilityP}
	Let $s\in(0,1)$, $\Lambda>0$, $\sequence{\fk}{k}\subset\Lspace{1}(\Renne)$, $f\in\Lspace{1}(\Renne)$, $u\in\Llocspace{1}(\Renne)$ and $\uk\in\FunctionalProblemSolutions{\fk,\Lambda}$ for every $k\in\N$.
	If $\uk\to u$ in $\Llocspace{1}(\Renne)$ and $\fk\to f$ in $\Lspace{1}(\Renne)$, then $u\in\FunctionalProblemSolutions{f,\Lambda}\subset \Wspace{s}{1}(\Renne)$.
\end{prop}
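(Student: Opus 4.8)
The plan is to run the usual stability argument for minimizers of convex functionals, taking care of the fact that here the data converge strongly in $\Lspace{1}(\Renne)$ while the minimizers converge only in $\Llocspace{1}(\Renne)$. The two essential tools are the $\Llocspace{1}(\Renne)$-lower semicontinuity of $u\mapsto\sTotalVar{u}$ recalled in \Cref{sec:notation}, and the comparison inequalities coming from the minimality of each $\uk$. First I would extract uniform bounds: since $\uk$ solves the problem with datum $\fk\in\Lspace{1}(\Renne)$ (so $\uk\in\Wspace{s}{1}(\Renne)$ by \Cref{prop:existenceL1Datum}), testing that problem against the competitor $0$ gives $\sEnergyLOneFracTV{(\uk;\fk,\Lambda)}\le\sEnergyLOneFracTV{(0;\fk,\Lambda)}=\Lambda\norm{\fk}_{\Lspace{1}(\Renne)}$. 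Since $\fk\to f$ in $\Lspace{1}(\Renne)$, the constant $M:=\sup_k\norm{\fk}_{\Lspace{1}(\Renne)}$ is finite, hence $\sTotalVar{\uk}\le\Lambda M$ and $\norm{\uk-\fk}_{\Lspace{1}(\Renne)}\le M$, so $\norm{\uk}_{\Lspace{1}(\Renne)}\le 2M$ and $\sequence{\uk}{k}$ is bounded in $\Wspace{s}{1}(\Renne)$.

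Next I would check that the limit $u$ is admissible. By $\Llocspace{1}(\Renne)$-lower semicontinuity of the $s$-total variation, $\sTotalVar{u}\le\liminf_k\sTotalVar{\uk}\le\Lambda M<+\infty$. Passing to a subsequence along which $\uk\to u$ and $\fk\to f$ pointwise $\lebn$-a.e. (possible since $\uk\to u$ in $\Llocspace{1}(\Renne)$ and $\fk\to f$ in $\Lspace{1}(\Renne)$), Fatou's lemma gives $\norm{u-f}_{\Lspace{1}(\Renne)}\le\liminf_k\norm{\uk-\fk}_{\Lspace{1}(\Renne)}\le M<+\infty$; therefore $u\in\Wspace{s}{1}(\Renne)$ and $\sEnergyLOneFracTV{(u;f,\Lambda)}<+\infty$, so condition $(1)$ in the definition of solution holds.

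For minimality, fix $w\in\Llocspace{1}(\Renne)$; if $\sEnergyLOneFracTV{(w;f,\Lambda)}=+\infty$ there is nothing to prove, so assume it is finite, which forces $\norm{w-f}_{\Lspace{1}(\Renne)}<+\infty$. Using $w$ as a competitor for $\uk$ yields $\sEnergyLOneFracTV{(\uk;\fk,\Lambda)}\le\sTotalVar{w}+\Lambda\norm{w-\fk}_{\Lspace{1}(\Renne)}$, and since $\bigl|\norm{w-\fk}_{\Lspace{1}(\Renne)}-\norm{w-f}_{\Lspace{1}(\Renne)}\bigr|\le\norm{\fk-f}_{\Lspace{1}(\Renne)}\to 0$, we obtain $\limsup_k\sEnergyLOneFracTV{(\uk;\fk,\Lambda)}\le\sEnergyLOneFracTV{(w;f,\Lambda)}$. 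For the opposite bound, pick a subsequence realizing $\liminf_k\sEnergyLOneFracTV{(\uk;\fk,\Lambda)}$ and extract from it a further subsequence with $\uk\to u$ and $\fk\to f$ a.e.; along it, lower semicontinuity of $\sTotalVar{\cdot}$ and Fatou's lemma give $\sEnergyLOneFracTV{(u;f,\Lambda)}=\sTotalVar{u}+\Lambda\norm{u-f}_{\Lspace{1}(\Renne)}\le\liminf_k\sTotalVar{\uk}+\Lambda\liminf_k\norm{\uk-\fk}_{\Lspace{1}(\Renne)}\le\liminf_k\sEnergyLOneFracTV{(\uk;\fk,\Lambda)}$. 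Chaining the two inequalities, $\sEnergyLOneFracTV{(u;f,\Lambda)}\le\sEnergyLOneFracTV{(w;f,\Lambda)}$ for every $w\in\Llocspace{1}(\Renne)$, i.e. $u\in\FunctionalProblemSolutions{f,\Lambda}$; the inclusion $\FunctionalProblemSolutions{f,\Lambda}\subset\Wspace{s}{1}(\Renne)$ is \Cref{rem:ConvexityFunctionalProblemSolutions}.

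I expect the only genuinely delicate point to be the subsequence bookkeeping needed to reconcile the strong $\Lspace{1}$-convergence of the data with the merely local $\Lspace{1}$-convergence of the minimizers when passing to $\liminf$ and $\limsup$ of the energies; no idea beyond lower semicontinuity, Fatou's lemma, and direct comparison is required, and in particular the fidelity term is controlled from below by Fatou alone, since it is not $\Llocspace{1}(\Renne)$-lower semicontinuous in general.
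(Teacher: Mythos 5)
Your proposal is correct and follows essentially the same route as the paper: compare $\uk$ against a fixed competitor, pass to the limit using the $\Llocspace{1}(\Renne)$-lower semicontinuity of $\sTotalVar{\cdot}$ and Fatou's lemma on the fidelity term (after extracting a.e.-convergent subsequences), and use the strong $\Lspace{1}$-convergence of the data on the competitor side. The paper's proof is just a terser version of the same argument; your additional uniform-bound and admissibility steps are harmless (finiteness of the energy of $u$ already follows from the limit inequality with competitor $0$).
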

\begin{proof}
	Fix $v\in\Wspace{s}{1}(\Renne)$ and $k\in\N$. By minimality of $\uk$, we can write
	\begin{align*}
	\sTotalVar{\uk}+\Lambda\int_{\Renne}|\uk-\fk|\integralde x\leq \sTotalVar{v}+\Lambda\int_{\Renne}|v-\fk|\integralde x.
	\end{align*}
	Since the fractional total variation is lower semicontinuous with respect to the $\Llocspace{1}(\Renne)$ topology, by letting $k\to+\infty$ we get
	\begin{align*}
	\sTotalVar{u}+\Lambda\int_{\Renne}|u-f|\integralde x\leq \sTotalVar{v}+\Lambda\int_{\Renne}|v-f|\integralde x.
	\end{align*}
\end{proof}

The following proposition allows us to describe the set of solutions of a class of problems \ref{Pb:FunctionalProblem} whose datum is a combination or a modification of other data.

\begin{prop}[Properties of solutions]\label{prop:basicPropertiesFuncProb}
	Let $s\in(0,1)$, $\Lambda>0$, $c\in\R$, $g\in\Llocspace{1}(\Renne)$.
	\begin{enumerate}[label=(\roman*)]
		\item\label{item:translation} $\FunctionalProblemSolutions{g+c,\Lambda}=\FunctionalProblemSolutions{g,\Lambda}+c$;
		\item\label{item:dilation} $c\hspace*{0.02cm}\FunctionalProblemSolutions{g,\Lambda}=\FunctionalProblemSolutions{c g,\Lambda}$; 
		\item\label{item:truncation} if $u\in\FunctionalProblemSolutions{g,\Lambda}$, then $u^{\pm}\in\FunctionalProblemSolutions{g^{\pm},\Lambda}$;
		\item\label{item:truncationbis} if $u\in\FunctionalProblemSolutions{g,\Lambda}$, then $\min\{u,c\}\in\FunctionalProblemSolutions{\min\{g,c\},\Lambda}$ and\\ $\max\{u,c\}\in\FunctionalProblemSolutions{\max\{g,c\},\Lambda}$.
	\end{enumerate}
\end{prop}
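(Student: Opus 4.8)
The plan is to reduce the four statements to two structural features of the energy $\sEnergyLOneFracTV$ together with one exact splitting identity. Items \ref{item:translation} and \ref{item:dilation} are the easy ones. For any $c\in\R$ an additive constant cancels inside the double integral defining the $s$-total variation and inside $|(u+c)-(g+c)|$, so $\sEnergyLOneFracTV{(u+c;g+c,\Lambda)}=\sEnergyLOneFracTV{(u;g,\Lambda)}$; since $u\mapsto u+c$ is a bijection of $\Llocspace{1}(\Renne)$ preserving finiteness of the energy, it carries the minimizers for $g$ exactly onto those for $g+c$, which is \ref{item:translation}. Similarly, for $c\neq 0$, absolute $1$-homogeneity of the $s$-total variation and of the $\Lspace{1}$-norm gives $\sEnergyLOneFracTV{(cu;cg,\Lambda)}=|c|\,\sEnergyLOneFracTV{(u;g,\Lambda)}$, so the bijection $u\mapsto cu$ respects minimality, proving \ref{item:dilation} (the case $c=0$ is immediate, $0$ being the unique minimizer of the null datum).

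The substance is in \ref{item:truncationbis}, and it rests on the identity
\begin{equation*}
\sEnergyLOneFracTV{(u;g,\Lambda)}=\sEnergyLOneFracTV{(\min\{u,c\};\min\{g,c\},\Lambda)}+\sEnergyLOneFracTV{(\max\{u,c\};\max\{g,c\},\Lambda)},
\end{equation*}
valid for all $u,g\in\Llocspace{1}(\Renne)$ and all $c\in\R$. Its fidelity part is the pointwise identity $|u-g|=|\min\{u,c\}-\min\{g,c\}|+|\max\{u,c\}-\max\{g,c\}|$, checked by distinguishing the signs of $u-c$ and $g-c$. Its total-variation part is $\sTotalVar{u}=\sTotalVar{\min\{u,c\}}+\sTotalVar{\max\{u,c\}}$, which I would get from the coarea formula \eqref{eq:sPerCoarea}: the superlevel set $\{\min\{u,c\}>t\}$ coincides with $\{u>t\}$ for $t<c$ and is empty for $t\geq c$, whereas $\{\max\{u,c\}>t\}$ coincides with $\{u>t\}$ for $t\geq c$ and equals $\Renne$ — hence has vanishing $s$-perimeter — for $t<c$; therefore $\sTotalVar{\min\{u,c\}}=\int_{-\infty}^{c}\Ps(\{u>t\})\integralde t$ and $\sTotalVar{\max\{u,c\}}=\int_{c}^{+\infty}\Ps(\{u>t\})\integralde t$, and summing recovers $\sTotalVar{u}$. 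Here $\min\{u,c\},\max\{u,c\}\in\Llocspace{1}(\Renne)$, so \eqref{eq:sPerCoarea} applies.

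With this identity in hand I would conclude as follows. Set $m(h):=\inf\{\sEnergyLOneFracTV{(w;h,\Lambda)}:w\in\Llocspace{1}(\Renne)\}\in[0,+\infty]$, and let $u$ be a solution of \ref{Pb:FunctionalProblem}, so $\sEnergyLOneFracTV{(u;g,\Lambda)}=m(g)<+\infty$. Evaluating the splitting identity at $u$ gives $m(g)\geq m(\min\{g,c\})+m(\max\{g,c\})$. For the reverse inequality, observe that for arbitrary $v_1,v_2\in\Llocspace{1}(\Renne)$ the function $v_1+v_2-c$ is an admissible competitor for the datum $g$, and, by subadditivity of the $s$-total variation (half a seminorm), the triangle inequality, and the identity $g=\min\{g,c\}+\max\{g,c\}-c$,
\begin{equation*}
\sEnergyLOneFracTV{(v_1+v_2-c;g,\Lambda)}\leq\sEnergyLOneFracTV{(v_1;\min\{g,c\},\Lambda)}+\sEnergyLOneFracTV{(v_2;\max\{g,c\},\Lambda)};
\end{equation*}
taking the infimum over $v_1$ and $v_2$ gives $m(g)\leq m(\min\{g,c\})+m(\max\{g,c\})$. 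Hence all these inequalities are equalities, so the two nonnegative and (as $m(g)<+\infty$) finite summands in the splitting identity at $u$ must equal $m(\min\{g,c\})$ and $m(\max\{g,c\})$ respectively; that is, $\min\{u,c\}$ and $\max\{u,c\}$ are minimizers, which is \ref{item:truncationbis}. Finally, \ref{item:truncation} is the case $c=0$ of \ref{item:truncationbis} combined with \ref{item:dilation}: $u^{+}=\max\{u,0\}\in\FunctionalProblemSolutions{\max\{g,0\},\Lambda}=\FunctionalProblemSolutions{g^{+},\Lambda}$, while $-u^{-}=\min\{u,0\}\in\FunctionalProblemSolutions{\min\{g,0\},\Lambda}=\FunctionalProblemSolutions{-g^{-},\Lambda}$, so $u^{-}\in\FunctionalProblemSolutions{g^{-},\Lambda}$ by \ref{item:dilation} with $c=-1$.

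The step needing the most care is recognising and verifying the splitting identity; I would carry out the coarea computation for the truncations $\min\{u,c\}$, $\max\{u,c\}$ themselves (whose superlevel sets are, on the nose, those of $u$ or else $\emptyset$ or $\Renne$), which sidesteps the $\lebn$-null-set nuisances one would encounter handling $u^{\pm}$ by hand. I would also emphasise that one should not attempt to build a competitor for $g$ whose positive and negative parts are prescribed in advance — that would force an unpleasant disjoint-support condition; the crude competitor $v_1+v_2-c$ already realises the bound, exactly because the $s$-total variation is subadditive. Everything else is routine rearrangement, underpinned by the convexity and lower semicontinuity of $\sEnergyLOneFracTV$ recorded in \Cref{sec:notation}.
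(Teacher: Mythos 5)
Your proof is correct and rests on the same mechanism as the paper's: an exact additive splitting of both the $s$-total variation and the fidelity term across the truncation level, combined with subadditivity of $\sTotalVar{\cdot}$ to control competitors. The only difference is organizational — you prove \ref{item:truncationbis} directly at level $c$ and deduce \ref{item:truncation} as the case $c=0$, whereas the paper proves \ref{item:truncation} first (via the splitting $u=u^+-u^-$ and an explicit perturbation $\psi$) and obtains \ref{item:truncationbis} from the identities $\min\{u,c\}=c-(u-c)^-$, $\max\{u,c\}=(u-c)^++c$ — which is a trivial reparametrization of the same argument.
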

\begin{proof}
	The argument that we will use in the proof is an adaptation of \cite[Lemma 3.5]{NovPao}.
	We shall prove only $\ref{item:truncation}$ and $\ref{item:truncationbis}$, since $\ref{item:translation}$ and $\ref{item:dilation}$ are immediate consequences of the definitions. A direct computation shows that
	\begin{align*}
	\sTotalVar{u}=\sTotalVar{u^+}+\sTotalVar{u^-};
	\end{align*}
	\begin{align*}
	|g-u|=|g^+-u^+|+|g^--u^-|.
	\end{align*}
	Now we fix a perturbation $\psi\in\Llocspace{1}(\Renne)$ and, being $u$ a solution of $\FunctionalProblem{g}{\Lambda}$, we write:
	\begin{align*}
	&\Lambda\int_{\Renne}|g^+-u^+|\integralde x +\sTotalVar{u^+} +\Lambda\int_{\Renne}|g^--u^-|\integralde x +\sTotalVar{u^-}\\
	&=\Lambda\int_{\Renne}|g-u|\integralde x +\sTotalVar{u}\leq\Lambda\int_{\Renne}|u+\psi-g|\integralde x +\sTotalVar{(u+\psi)}\\
	&\leq\Lambda\int_{\Renne}|u^++\psi-g^+|\integralde x +\sTotalVar{(u^++\psi)}+\Lambda\int_{\Renne}|u^--g^-|\integralde x +\sTotalVar{u^-}.
	\end{align*}
	From this chain of inequalities we deduce $u^{+}\in\FunctionalProblemSolutions{g^{+},\Lambda}$; while with slightly different estimates, it can be proved that $u^{-}\in\FunctionalProblemSolutions{g^{-},\Lambda}$. This implies $\ref{item:truncation}$. 
	
	$\ref{item:truncationbis}$ follows from the identities $\min\{u,c\}=c-(u-c)^-$, $\max\{u,c\}=(u-c)^++c$ and $\ref{item:translation}, \ref{item:dilation},\ref{item:truncation}$.
\end{proof}
\begin{remark}\label{rem:contrastInvariance}
	$\ref{item:dilation}$ is interpreted as \textit{contrast invariance} of the model \eqref{eq:OurModel} in the image processing setting (see also \Cref{sec:introduction})
\end{remark}

We now define a functional which can be thought of as the geometric counterpart of the one defined in \Cref{def:sEnergy}. 
\begin{defn}\label{def:GeometricsEnergy}
	Let $\Lambda>0$, $s\in(0,1)$ and $E\in\measurablesets$. We define the functional (or energy) $\sEnergyGeometric=\sEnergyGeometric(\cdot;E,\Lambda):\measurablesets\to[0,+\infty]$ by
	\begin{align*}
	\sEnergyGeometric{(U)}=\sEnergyGeometric{(U;E,\Lambda)}:=\Ps(U)+\Lambda|E\Delta U| .
	\end{align*}
	
\end{defn}

Thanks to \eqref{eq:sPerCoarea}, we are led to the following geometric representation of $\sEnergyLOneFracTV$:
\begin{prop}
	For every $s\in(0,1)$, $\Lambda>0$, $u\in\Wspace{s}{1}(\Renne)$ and $f\in\Lspace{1}(\Renne)$ the following identity holds:
	\begin{align}\label{eq:LinkFuncGeom}
	\sEnergyLOneFracTV{(u;f,\Lambda)}
	=\int_{-\infty}^{+\infty}\sEnergyGeometric{(\{u>t\};\{f>t\},\Lambda)}\integralde t.
	\end{align}
\end{prop}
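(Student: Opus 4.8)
The plan is to establish the identity \eqref{eq:LinkFuncGeom} by splitting both sides into the two corresponding addends and matching them separately, relying on the coarea formula \eqref{eq:sPerCoarea} for the fractional total variation part and on a layer-cake/Fubini computation for the fidelity part. Concretely, by \eqref{eq:sPerCoarea} we immediately have
\begin{align*}
	\sTotalVar{u}=\int_{-\infty}^{+\infty}\Ps(\{u>t\})\integralde t,
\end{align*}
which accounts for the $\Ps(\{u>s\})$ term inside the integrand on the right-hand side; so the only thing left is to show
\begin{align*}
	\int_{\Renne}|u(x)-f(x)|\integralde x=\int_{-\infty}^{+\infty}\bigl|\{u>t\}\,\Delta\,\{f>t\}\bigr|\integralde t .
\end{align*}

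For this last identity, first I would record the pointwise fact that for any two real numbers $a,b$ one has $|a-b|=\int_{-\infty}^{+\infty}|\characteristic{\{a>t\}}-\characteristic{\{b>t\}}|\integralde t=\lebone(\{t:\min\{a,b\}\leq t<\max\{a,b\}\})$. Applying this with $a=u(x)$, $b=f(x)$, integrating over $x\in\Renne$, and using Tonelli's theorem to exchange the order of integration (the integrand is nonnegative and measurable in $(x,t)$), one obtains
\begin{align*}
	\int_{\Renne}|u(x)-f(x)|\integralde x
	=\int_{-\infty}^{+\infty}\int_{\Renne}\bigl|\characteristic{\{u>t\}}(x)-\characteristic{\{f>t\}}(x)\bigr|\integralde x\integralde t,
\end{align*}
and the inner integral is exactly $|\{u>t\}\,\Delta\,\{f>t\}|$ since $|\characteristic{A}-\characteristic{B}|=\characteristic{A\Delta B}$. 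Adding the two matched pieces and using the definitions of $\sEnergyLOneFracTV$ and $\sEnergyGeometric$ (Definitions \ref{def:sEnergy} and \ref{def:GeometricsEnergy}) gives \eqref{eq:LinkFuncGeom}.

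The main thing to be careful about — rather than a genuine obstacle — is integrability/finiteness, so that splitting the integral on the right of \eqref{eq:LinkFuncGeom} into its two addends is legitimate. Since $u\in\Wspace{s}{1}(\Renne)$ we have $\sTotalVar{u}<+\infty$, hence $t\mapsto\Ps(\{u>t\})$ is integrable; and since $u,f\in\Lspace{1}(\Renne)$ the quantity $\int_{\Renne}|u-f|$ is finite, so by the computation above $t\mapsto|\{u>t\}\,\Delta\,\{f>t\}|$ is integrable as well. Thus both terms in the integrand on the right-hand side are finite for a.e.\ $t$ and their integrals are finite, which justifies writing $\int(\Ps(\{u>t\})+\Lambda|\{u>t\}\Delta\{f>t\}|)\integralde t$ as the sum of the two integrals, completing the proof. (One may also note $\{u>t\}=\{f>t\}=\emptyset$ for $t\geq\max\{\esssup u,\esssup f\}$ when these are finite, which makes the tails harmless, but the integrability argument already covers the general case.)
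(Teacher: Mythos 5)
Your proof is correct and follows essentially the same route as the paper, which simply invokes the coarea formula \eqref{eq:sPerCoarea} together with the layer-cake formula (as in the proof of Proposition 5.1 of Chan--\Esedoglu); your Tonelli computation for the fidelity term is exactly the layer-cake step spelled out. No issues.
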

\begin{proof}
	Apply \eqref{eq:sPerCoarea} and the \textit{layer-cake} formula to $\sEnergyLOneFracTV$ \cite[see][Proof of Proposition 5.1]{ChaEse}.
\end{proof}

We will now relate \ref{Pb:FunctionalProblem} and the variational problem corresponding to the geometric functional $\sEnergyGeometric{}$, which is introduced in the next definition.
\begin{defn}\label{def:GeometricsEnergySolutions}
	Let $s\in(0,1)$, $\Lambda>0$ and $E\in\measurablesets$.
	\ref{Pb:GeometricProblem} refers to the following (nonconvex) geometric minimum problem:
	\begin{align}\label{Pb:GeometricProblem}\tag*{$\GeometricProblem{E}{\Lambda}$}
	\min_{U\in\measurablesets}\sEnergyGeometric{(U;E,\Lambda)}=\min_{U\in\measurablesets}\Ps(U)+\Lambda|E\Delta U|.
	\end{align}
	We say that $U\in\measurablesets$ is a solution, or a \textit{(global) minimum}, of \ref{Pb:GeometricProblem} if and only if:
	\begin{enumerate}
		\item $\sEnergyGeometric{(U;E,\Lambda)}<+\infty$;
		\item $\sEnergyGeometric{(U;E,\Lambda)}\leq \sEnergyGeometric{(W;E,\Lambda)}$ for all $W\in\measurablesets$.
	\end{enumerate}
	We denote the (possibly empty) set of solutions of \ref{Pb:GeometricProblem} by $\GeometricProblemSolutions{E,\Lambda}$.
	
	We say that $U\in\measurablesets$ is a \textit{local minimum} of \ref{Pb:GeometricProblem} if and only if:
	there exists $\delta>0$ such that for every $\xzero\in\Renne$, for every $0<r<\delta$ and for every $W\in\measurablesets$ with $U\Delta W\subseteq\BallRadiusCenter{r}{\xzero}$ it holds:
	\begin{align*}
	\sEnergyGeometric{(U;E,\Lambda)}\leq \sEnergyGeometric{(W;E,\Lambda)}.
	\end{align*}

\end{defn}

\begin{remark}[Solutions with complement datum]
	\label{rem:Solutions with complement datum}
	Let $s\in(0,1)$, $\Lambda>0$ and $E\in\measurablesets$. Then, \ref{Pb:GeometricProblem} admits solution if and only if $\GeometricProblem{\comp{E}}{\Lambda}$ admits solution. Furthermore,
	\begin{equation}\label{eq:solutions with complement datum}
		\GeometricProblemSolutions{\comp{E},\Lambda}=\Set{\comp{\U}}{\U\in\GeometricProblemSolutions{E, \Lambda}},
	\end{equation}
	 even though the former sets might be empty.
	 
	To prove \eqref{eq:solutions with complement datum}, we first note that $\Ps(\U)=\Ps(\comp{\U})$ and $|E\Delta \U|=|\comp{E}\Delta \comp{\U}|$; so, \ref{Pb:GeometricProblem} is equivalent to
	\begin{align}
		\label{eq:EquivPBcomplement}
		\min_{\U\in\measurablesets}\Ps(\comp{\U})+\Lambda|\comp{E}\Delta \comp{\U}|.
	\end{align}
	
	Finally, we observe that the set of solutions of $\GeometricProblem{\comp{E}}{\Lambda}$ is given by
	\begin{equation*}
		\Set{\comp{\U}}{\U\spazio\text{is a solution of}\spazio\eqref{eq:EquivPBcomplement}},
	\end{equation*}
which shows \eqref{eq:solutions with complement datum}.
\end{remark}

The following proposition is the analogue of  \Cref{prop:stabilityP} for the geometric problem \ref{Pb:GeometricProblem}.

\begin{prop}[Stability 2]\label{prop:stabilityGP}
	Let $s\in(0,1)$, $\Lambda>0$, $\sequence{\Uk}{k},\sequence{\Fk}{k}\subset\measurablesets$ and $U,F\in\measurablesets$ such that $\Uk\in\GeometricProblemSolutions{\Fk,\Lambda}$ for every $k\in\N$.
	If $\Uk\to U$ in $\Llocspace{1}(\Renne)$ and $\Fk\to F$ in $\Lspace{1}(\Renne)$, then $U\in\GeometricProblemSolutions{F,\Lambda}$.
\end{prop}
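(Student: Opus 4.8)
The plan is to mimic the proof of \Cref{prop:stabilityP} but adapted to the geometric setting, relying only on the lower semicontinuity of the fractional perimeter (equivalently, of $\sTotalVar{\cdot}$ applied to characteristic functions) and on the convergence of the data. First I would fix an arbitrary competitor $W\in\measurablesets$ with $\sEnergyGeometric{(W;F,\Lambda)}<+\infty$; since $|\Fk\Delta F|\to 0$ we may assume $W$ also has finite energy against each $\Fk$ for $k$ large, and in any case $\Ps(W)<+\infty$. By minimality of $\Uk$ for \ref{Pb:GeometricProblem} with datum $\Fk$,
\begin{align*}
\Ps(\Uk)+\Lambda|\Fk\Delta \Uk|\leq\Ps(W)+\Lambda|\Fk\Delta W|.
\end{align*}
The right-hand side converges to $\Ps(W)+\Lambda|F\Delta W|$ because $|\Fk\Delta W|\to|F\Delta W|$, which follows from $\||\characteristic{\Fk}-\characteristic{W}|-|\characteristic{F}-\characteristic{W}|\|_{\Lspace{1}}\leq\|\characteristic{\Fk}-\characteristic{F}\|_{\Lspace{1}}\to 0$.

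Next I would pass to the limit on the left-hand side. For the perimeter term, $\characteristic{\Uk}\to\characteristic{U}$ in $\Llocspace{1}(\Renne)$, so by lower semicontinuity of $U\mapsto\Ps(U)=\sTotalVar{\characteristic{U}}$ with respect to $\Llocspace{1}(\Renne)$-convergence (recalled in \Cref{sec:notation}), we get $\Ps(U)\leq\liminf_k\Ps(\Uk)$. For the fidelity term, the convergence $|\Fk\Delta\Uk|\to|F\Delta U|$ requires a little more care because only local $\Lspace{1}$-convergence of $\Uk$ is assumed; here I would use that $|\Fk\Delta\Uk|=\|\characteristic{\Fk}-\characteristic{\Uk}\|_{\Lspace{1}}$, split it as $\|(\characteristic{\Fk}-\characteristic{F})-(\characteristic{\Uk}-\characteristic{U})+(\characteristic{F}-\characteristic{U})\|_{\Lspace{1}}$, and note that $\characteristic{\Fk}-\characteristic{\Uk}$ is bounded in $\Lspace{1}$ uniformly in $k$ (indeed $|\Fk\Delta\Uk|\leq\Ps(W)/\Lambda+|\Fk\Delta W|$ stays bounded by the displayed inequality), while the integrand converges a.e.\ (up to a subsequence) and is dominated on bounded sets. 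Actually the cleanest route is Fatou's lemma applied directly to $|\characteristic{\Fk}-\characteristic{\Uk}|\to|\characteristic{F}-\characteristic{U}|$ a.e.\ along a subsequence realising the liminf, giving $|F\Delta U|\leq\liminf_k|\Fk\Delta\Uk|$; combined with the perimeter estimate this yields $\sEnergyGeometric{(U;F,\Lambda)}\leq\liminf_k\sEnergyGeometric{(\Uk;F,\Lambda)}\leq\Ps(W)+\Lambda|F\Delta W|=\sEnergyGeometric{(W;F,\Lambda)}$. Since $W$ was arbitrary, $U\in\GeometricProblemSolutions{F,\Lambda}$, and in particular $\sEnergyGeometric{(U;F,\Lambda)}<+\infty$ so the solution set is nonempty-consistent.

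The main obstacle is handling the fidelity term under merely local $\Lspace{1}$-convergence of the $\Uk$: one cannot directly say $\|\characteristic{\Fk}-\characteristic{\Uk}\|_{\Lspace{1}}\to\|\characteristic{F}-\characteristic{U}\|_{\Lspace{1}}$, only that the liminf dominates, which is exactly the inequality needed for lower semicontinuity of the energy. This is the same subtlety as in \Cref{prop:stabilityP}, where the datum convergence is global but the solution convergence is local, and it is resolved there implicitly by the structure of the estimate. I would also remark that a posteriori $\Uk\to U$ holds in $\Lspace{1}(\Renne)$ (not just locally), since the finite-energy bound forces equi-integrability at infinity, but this is not needed for the statement.
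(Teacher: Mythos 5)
Your proposal is correct and follows essentially the same route as the paper: fix an arbitrary competitor, invoke minimality of $\Uk$ against it, and pass to the limit using $\Llocspace{1}$-lower semicontinuity of $\Ps$ together with convergence of the right-hand side. The only difference is that you make explicit (via Fatou along an a.e.-convergent subsequence realising the liminf) the lower semicontinuity of the fidelity term under merely local convergence of $\Uk$, a detail the paper's proof leaves implicit.
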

\begin{proof}
	Let $V\in\measurablesets$ and $k\in\N$. By minimality of $\Uk$, we get
	\begin{align*}
	\Ps(\Uk)+\Lambda|\Uk\Delta\Fk|\leq\Ps(V)+\Lambda|V\Delta\Fk|.
	\end{align*}	
	Since $\Ps$ is lower semicontinuous with respect to the $\Llocspace{1}(\Renne)$ topology, by letting $k\to+\infty$ we get
	\begin{align*}
	\Ps(U)+\Lambda|U\Delta F|\leq\Ps(V)+\Lambda|V\Delta F|.
	\end{align*}
\end{proof}

The following result is one the key steps for the analysis of the model \eqref{eq:OurModel}.
It establishes a link between \ref{Pb:FunctionalProblem} and a suitable family of geometric problems $\sequence{\GeometricProblem{\Et}{\Lambda}}{t\in\R}$. 

\begin{prop}[Equivalence]\label{prop:equivalenceFuncGeomPBs}
	Let $s\in(0,1)$, $u\in\Wspace{s}{1}(\Renne)$, $f\in\Lspace{1}(\Renne)$ and $\Lambda>0$.
	Let $(\sequence{\Ut}{t\in\R},\sequence{\Et}{t\in\R})$ be equal to one of the following pair of sequences of superlevel or sublevel sets:
	\begin{align*}
	&(\sequence{\{u>t\}}{t\in\R},\sequence{\{f>t\}}{t\in\R});
	&(\sequence{\{u<t\}}{t\in\R},\sequence{\{f<t\}}{t\in\R});\\
	&(\sequence{\{u\geq t\}}{t\in\R},\sequence{\{f\geq t\}}{t\in\R});
	&(\sequence{\{u\leq t\}}{t\in\R},\sequence{\{f\leq t\}}{t\in\R}).
	\end{align*}
	Then,
	\begin{enumerate}[label=(\roman*)]
		\item\label{item:fromGPtoP} If $\Ut\in\GeometricProblemSolutions{\Et,\Lambda}$ for $\lebone$-a.e. $t:\essinf u\wedge\essinf f<t<\esssup u\vee\esssup f $, then $u\in\FunctionalProblemSolutions{f,\Lambda}$,
		\item\label{item:fromPtoGP}  If	$u\in\FunctionalProblemSolutions{f,\Lambda}$, then $\Ut\in\GeometricProblemSolutions{\Et,\Lambda}$ for all $t\in\R\setminus\{0\}$.
	\end{enumerate}
\end{prop}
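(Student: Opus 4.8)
The plan is to regard \ref{Pb:FunctionalProblem} as a continuous superposition, over the level parameter $t$, of the geometric problems $\GeometricProblem{\Et}{\Lambda}$ via the coarea-type identity \eqref{eq:LinkFuncGeom}, together with the elementary identity $\sEnergyLOneFracTV{(\characteristic{U};\characteristic{E},\Lambda)}=\Ps(U)+\Lambda|U\Delta E|=\sEnergyGeometric{(U;E,\Lambda)}$, which makes $\characteristic{U}\in\FunctionalProblemSolutions{\characteristic{E},\Lambda}$ equivalent to $U\in\GeometricProblemSolutions{E,\Lambda}$. Using the sign symmetry $\FunctionalProblemSolutions{-f,\Lambda}=-\FunctionalProblemSolutions{f,\Lambda}$ (a case of \Cref{prop:basicPropertiesFuncProb}\ref{item:dilation}) and the complement identity \eqref{eq:solutions with complement datum}, statement \ref{item:fromPtoGP} reduces to showing that $\characteristic{\{u>t\}}$ and $\characteristic{\{u\ge t\}}$ are functional minimizers for the corresponding data whenever $t>0$; the remaining cases — the other three level-set families, and the negative levels, where $\Ut$ may have infinite measure — are then obtained from these two by a sign change and/or a complement.

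For \ref{item:fromGPtoP} I would argue directly from \eqref{eq:LinkFuncGeom}. Since $u\in\Wspace{s}{1}(\Renne)$ and $f\in\Lspace{1}(\Renne)$, $\sEnergyLOneFracTV{(u;f,\Lambda)}$ is finite, so \eqref{eq:LinkFuncGeom} (applied to $u,f$, or to $-u,-f$ in the sublevel cases, and using that $\{u\ge t\}=\{u>t\}$ for a.e.\ $t$) gives $\sEnergyLOneFracTV{(u;f,\Lambda)}=\int_{\R}\sEnergyGeometric{(\Ut;\Et,\Lambda)}\integralde t$. Given a competitor $w\in\Llocspace{1}(\Renne)$ one may assume $\sEnergyLOneFracTV{(w;f,\Lambda)}<+\infty$, which forces $w\in\Wspace{s}{1}(\Renne)$ and hence $\sEnergyLOneFracTV{(w;f,\Lambda)}=\int_{\R}\sEnergyGeometric{(\{w>t\};\Et,\Lambda)}\integralde t$. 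For a.e.\ $t\in(\essinf u\wedge\essinf f,\esssup u\vee\esssup f)$ the hypothesis gives $\sEnergyGeometric{(\Ut;\Et,\Lambda)}\le\sEnergyGeometric{(\{w>t\};\Et,\Lambda)}$, while for $t$ outside the closure of that interval $\Ut$ and $\Et$ are, up to null sets, both empty or both $\Renne$, so $\sEnergyGeometric{(\Ut;\Et,\Lambda)}=0$ and the inequality is trivial. Integrating in $t$ yields $\sEnergyLOneFracTV{(u;f,\Lambda)}\le\sEnergyLOneFracTV{(w;f,\Lambda)}$, i.e.\ $u\in\FunctionalProblemSolutions{f,\Lambda}$, the inclusion in $\Wspace{s}{1}(\Renne)$ being \Cref{rem:ConvexityFunctionalProblemSolutions}.

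For \ref{item:fromPtoGP}, fix $t>0$ and, for $0<\eps<t$, introduce the clamped and rescaled function
\[
u_\eps:=\frac{1}{\eps}\bigl(\min\{\max\{u,t\},t+\eps\}-t\bigr)=\min\Bigl\{1,\tfrac{(u-t)^+}{\eps}\Bigr\},
\]
and let $f_\eps$ be defined from $f$ in the same way. Applying in turn $\max\{\cdot,t\}$, $\min\{\cdot,t+\eps\}$, a translation by $-t$ and a dilation by $\eps^{-1}$, \Cref{prop:basicPropertiesFuncProb}\ref{item:translation},\ref{item:dilation},\ref{item:truncationbis} show $u_\eps\in\FunctionalProblemSolutions{f_\eps,\Lambda}$. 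As $\eps\to0^+$ one has $u_\eps\to\characteristic{\{u>t\}}$ pointwise with $0\le u_\eps\le\characteristic{\{u>t\}}\in\Lspace{1}(\Renne)$ — this is exactly where $t>0$ is used — so $u_\eps\to\characteristic{\{u>t\}}$ in $\Lspace{1}(\Renne)$ by dominated convergence, and likewise $f_\eps\to\characteristic{\{f>t\}}$ in $\Lspace{1}(\Renne)$. \Cref{prop:stabilityP} then gives $\characteristic{\{u>t\}}\in\FunctionalProblemSolutions{\characteristic{\{f>t\}},\Lambda}$, and testing this minimality against competitors $\characteristic{W}$, $W\in\measurablesets$, via the identity above, yields $\{u>t\}\in\GeometricProblemSolutions{\{f>t\},\Lambda}$. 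The case $\{u\ge t\}$ is entirely analogous, with the clamping interval $[t,t+\eps]$ replaced by $[t-\eps,t]$, which produces $\characteristic{\{u\ge t\}}$ in the limit.

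The delicate part is \ref{item:fromPtoGP}, and more precisely obtaining the conclusion for \emph{every} $t\ne0$ rather than merely for a.e.\ $t$: since the energy in \eqref{eq:LinkFuncGeom} couples all levels, one cannot perturb $u$ at a single level, and the purpose of the clamping device is to approximate the indicator $\characteristic{\{u>t\}}$ by genuine functional minimizers so that \Cref{prop:stabilityP} applies. The same observation explains why $t=0$ must be excluded: there $u_\eps$ and $f_\eps$ need not belong to $\Lspace{1}(\Renne)$ (the superlevel set $\{u>0\}$ may have infinite measure), so the dominated-convergence step — and hence \Cref{prop:stabilityP} — breaks down. Once these two positive-level cases are in hand, checking that all four level-set families and both signs of $t$ are covered is routine via \eqref{eq:solutions with complement datum} and the sign symmetry.
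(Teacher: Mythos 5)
Your proof is correct and follows essentially the same route as the paper: part \ref{item:fromGPtoP} via the representation formula \eqref{eq:LinkFuncGeom}, and part \ref{item:fromPtoGP} via the clamped truncations $(u-t)/\eps\wedge1\vee0$ combined with \Cref{prop:basicPropertiesFuncProb} and \Cref{prop:stabilityP}, with the remaining level-set families recovered through sign changes and \Cref{rem:Solutions with complement datum}. The only (harmless) deviation is that you obtain $\{u\ge t\}$ by shifting the clamping window to $[t-\eps,t]$, whereas the paper writes $\{u\ge t\}=\bigcap_k\{u>\tau_k\}$ with $\tau_k\uparrow t$ and invokes the geometric stability of \Cref{prop:stabilityGP}.
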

\begin{remark}
	If $f\in\Lspace{1}(\Renne)$, then for every $t>0$, $|\{f>t\}|<+\infty$ and $|\{f\leq -t\}|<\infty$.
	So if $t\in\R\setminus\{0\}$, either $\{f>t\}$ or its complement has finite measure. Instead, both $\{f>0\}$ and its complement may have infinite measure.
\end{remark}

\begin{remark}\label{rem:zero super level is a local minimum}
	Using the same notation as in the statement of \Cref{prop:equivalenceFuncGeomPBs}, we observe that the case $t=0$ is more delicate. We can always prove $\Uzero$ is a local minimum for $\GeometricProblem{\Ezero}{\Lambda}$ (see \Cref{def:GeometricsEnergySolutions}). We can also prove it is a global minimum provided that we know either $\{f>0\}$ or its complement has finite measure, applying \Cref{prop:stabilityGP} and \Cref{prop:equivalenceFuncGeomPBs}.
\end{remark}

\begin{proof}[Proof of \Cref{prop:equivalenceFuncGeomPBs}]
	\newcommand{\A}{\GeometricProblemSolutions{\{f>t\},\Lambda}}
	\newcommand{\gammat}{\gamma_t}
	\newcommand{\gammatau}{\gamma_{\tau}}
	We start by proving $\ref{item:fromGPtoP}$.
	
	If $u\in\Wspace{s}{1}(\Renne)$ and its superlevel set $\{u>t\}$ is a solution of $\GeometricProblem{\{f>t\}}{\Lambda}$ for $\lebone$-a.e. $t:\essinf u\wedge\essinf f<t<\esssup u\vee\esssup f$, then $u$ solves \ref{Pb:FunctionalProblem} by a direct application of \eqref{eq:LinkFuncGeom}. The other cases are similar.
	
	At this point, we prove $\ref{item:fromPtoGP}$, adopting the strategy in \cite[Theorem 3.6]{NovPao}.
	
	Let $u\in\FunctionalProblemSolutions{f,\Lambda}$, $\eps>0$ and $t\in\R$. We define $\ueps(x):=(u(x)-t)/\eps\wedge 1\vee 0$ and $\feps(x):=(f(x)-t)/\eps\wedge 1\vee 0$ for all $x\in\Renne$. We observe that $\ueps$ and $\feps$ converge pointwise respectively to $\characteristic{\{u>t\}}$ and to $\characteristic{\{f>t\}}$. Since $\ueps=\ueps\characteristic{\{u>t\}}$ and  $\feps=\feps\characteristic{\{f>t\}}$, in both cases the convergence is also in $\Lspace{1}(\Renne)$ if $t>0$, by Lebesgue convergence theorem. Consequently, by \Cref{prop:basicPropertiesFuncProb} and \Cref{prop:stabilityP}, we deduce $\characteristic{\{u>t\}}\in\FunctionalProblemSolutions{\characteristic{\{f>t\}},\Lambda}$; so, $\{u>t\}\in\GeometricProblemSolutions{\{f>t\},\Lambda}$ for all $t>0$.
	If $t<0$, arguing similarly, we observe that $1-\ueps$ and $1-\feps$ converge respectively to  $\characteristic{\{u\leq t\}}$ and to $\characteristic{\{f\leq t\}}$ in $\Lspace{1}(\Renne)$. So, by \Cref{prop:basicPropertiesFuncProb} and \Cref{prop:stabilityP} again, we deduce $\characteristic{\{u\leq t\}}\in\FunctionalProblemSolutions{\characteristic{\{f\leq t\}},\Lambda}$. Then, $\{u\leq t\}\in\GeometricProblemSolutions{\{f\leq t\},\Lambda}$ for all $t<0$.
	
	Let $t>0$. We can find an increasing sequence $\sequence{\tau_k}{k}\subseteq (0,t)$  such that $\{u\geq t\}=\intersection[k\in\N]\{u>\tau_k\}$, $\{f\geq t\}=\intersection[k\in\N]\{f>\tau_k\}$, $\{u>\tau_k\}\in\GeometricProblemSolutions{\{f>\tau_k\},\Lambda}$, for all $k\in\N$. Applying \Cref{prop:stabilityGP}, we get $\{u\geq t\}\in\GeometricProblemSolutions{\{f\geq t\},\Lambda}$ for $t>0$.
	
	If $t<0$, we can find an increasing sequence $\sequence{\tau_k}{k}\subseteq (-\infty,t)$ such that $\{u< t\}=\union[k\in\N]\{u\leq\tau_k\}$, $\{f< t\}=\union[k\in\N]\{f\leq\tau_k\}$, $\{u\leq \tau_k\}\in\GeometricProblemSolutions{\{f\leq\tau_k\},\Lambda}$ for all $k\in\N$. Applying \Cref{prop:stabilityGP}, we get $\{u< t\}\in\GeometricProblemSolutions{\{f< t\},\Lambda}$ for $t<0$.
	
	By \Cref{rem:Solutions with complement datum}, we get to the thesis.
\end{proof}	

\begin{cor}\label{cor:regularity}
	Let $s\in(0,1)$, $\Lambda>0$ and $f\in\Lspace{1}(\Renne)$.
	If $u\in\FunctionalProblemSolutions{f,\Lambda}$, then all of its superlevel and sublevel sets 
	have boundary of class $C^1$ outside a closed singular set $S$ of Hausdorff
	dimension at most $n-3$.
\end{cor}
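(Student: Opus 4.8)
The plan is to reduce \Cref{cor:regularity} to the geometric reformulation of \Cref{prop:equivalenceFuncGeomPBs} and then invoke the known regularity theory for local almost minimizers of the fractional perimeter. So, fix $u\in\FunctionalProblemSolutions{f,\Lambda}$. The first step is to check that, for \emph{every} $t\in\R$, each of the four sets $\{u>t\}$, $\{u\ge t\}$, $\{u<t\}$, $\{u\le t\}$ is a local minimum (in the sense of \Cref{def:GeometricsEnergySolutions}) of the geometric problem $\GeometricProblem{E}{\Lambda}$ for the matching datum $E\in\{\{f>t\},\{f\ge t\},\{f<t\},\{f\le t\}\}$. For $t\neq 0$ this is precisely \Cref{prop:equivalenceFuncGeomPBs}\ref{item:fromPtoGP}, global minimality being in particular local. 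For $t=0$ I would use \Cref{rem:zero super level is a local minimum}: directly for $\{u>0\}$; applied to $-u$, which lies in $\FunctionalProblemSolutions{-f,\Lambda}$ by \Cref{prop:basicPropertiesFuncProb}\ref{item:dilation}, for $\{u<0\}=\{-u>0\}$; and then $\partial\{u\le 0\}=\partial\{u>0\}$ and $\partial\{u\ge 0\}=\partial\{u<0\}$, since the measure-theoretic boundary is complement-invariant. Hence it suffices to prove the asserted regularity for an arbitrary local minimum $U$ of an arbitrary geometric problem $\GeometricProblem{E}{\Lambda}$.

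The second step shows that such a $U$ is a local almost minimizer of $\Ps$. Let $\delta>0$ be a minimality radius for $U$, fix $\xzero\in\Renne$, $0<r<\delta$, and $W\in\measurablesets$ with $U\Delta W\subseteq B:=\BallRadiusCenter{r}{\xzero}$. Since $U=W$ outside $B$, the contributions to $\Ps(U)$ and $\Ps(W)$ coming from pairs of points both lying outside $B$ are equal and cancel; writing $\Ps(\,\cdot\,;B)$ for the remaining part of the double integral — the interactions involving at least one point of $B$, which is finite for a local minimum (by comparison with $U\setminus B$) — local minimality of $U$ together with the triangle inequality $|E\Delta W|\le|E\Delta U|+|U\Delta W|$ for symmetric differences gives
\begin{align*}
\Ps(U;B) &\le \Ps(W;B)+\Lambda\big(|E\Delta W|-|E\Delta U|\big)\\
&\le \Ps(W;B)+\Lambda|U\Delta W|\ \le\ \Ps(W;B)+\Lambda\omegan r^{n}.
\end{align*}
Since $s\in(0,1)$ we have $\Lambda\omegan r^{n}=\Lambda\omegan r^{(n-s)+s}=o(r^{n-s})$ as $r\to 0^{+}$, so $U$ is a local almost minimizer of the $s$-fractional perimeter, up to the scale $\delta$ and with subcritical exponent $s$ (equivalently, a ``$\Lambda$-minimizer'' of $\Ps$ at scale $\delta$).

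The third and final step is to invoke the regularity theory for such sets, namely \cite[Theorem 3.3]{CesNov17}: the boundary $\partial U$ is of class $C^{1}$ (in fact $C^{1,\alpha}$ for some $\alpha>0$) outside a closed singular set of Hausdorff dimension at most $n-3$. Applying this to every superlevel and every sublevel set of $u$ — the statement being vacuous whenever the level set is $\lebn$-equivalent to $\emptyset$ or to $\Renne$, in which case its boundary is empty — finishes the proof. I expect the only delicate point to be the level $t=0$, where a level set of $u$ need not be a global minimizer of the associated geometric problem and one must work with the weaker local minimality of \Cref{rem:zero super level is a local minimum} (and, to be fully rigorous, observe that a local minimum of $\GeometricProblem{E}{\Lambda}$ has locally finite $s$-perimeter); everything else is the routine bookkeeping of cancelling interactions together with the observation that the volume penalization $\Lambda\omegan r^{n}$ scales strictly below the critical order $r^{n-s}$ of the fractional perimeter.
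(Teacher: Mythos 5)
Your proof is correct and follows the same route as the paper: reduce to the geometric problem via \Cref{prop:equivalenceFuncGeomPBs} and \Cref{rem:zero super level is a local minimum}, observe that a local minimum of $\GeometricProblem{E}{\Lambda}$ is a local almost minimizer of $\Ps$ because the volume penalization $\Lambda\omegan r^{n}=o(r^{n-s})$, and conclude by \cite[Theorem~3.3]{CesNov17}. The paper states this in one line; your second step merely makes explicit the almost-minimality computation that the citation presupposes.
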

\begin{proof}
	The result follows combining \Cref{prop:equivalenceFuncGeomPBs}, \Cref{rem:zero super level is a local minimum} and \cite[Theorem 3.3]{CesNov17}.
\end{proof}

\section{High and low fidelity}\label{sec:lemmetto}

In the first part of this section we focus on the problem \ref{Pb:FunctionalProblem} when $f$ is binary (i.e. it is the characteristic function of some measurable set).

We begin with an immediate consequence of \Cref{prop:equivalenceFuncGeomPBs} and \Cref{rem:zero super level is a local minimum}.

\begin{prop}\label{cor:BINequivalenceFuncGeomPBs}
	Let $s\in(0,1)$, $\Lambda>0$ and $E\in\measurablesets$ such that $|E|<+\infty$.
	\begin{enumerate}[label=(\roman*)]
		\item\label{item:BINfromGPtoP} If $U\in\GeometricProblemSolutions{E,\Lambda}$ , then $\characteristic{U}\in\FunctionalProblemSolutions{\characteristic{E},\Lambda}$;
		\item\label{item:BINfromPtoGP}  		
		If	$u\in\FunctionalProblemSolutions{\characteristic{E},\Lambda}$, then $0\leq u\leq 1$ $\lebn$-a.e., $\{u>t\}\in\GeometricProblemSolutions{E,\Lambda}$ for all $t\in[0,1)$, $\{u\geq t\}\in\GeometricProblemSolutions{E,\Lambda}$ for all $t\in(0,1]$.
	\end{enumerate}
\end{prop}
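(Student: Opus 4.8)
The plan is to read off both statements directly from \Cref{prop:equivalenceFuncGeomPBs} and \Cref{rem:zero super level is a local minimum}, applied to the binary functions $u=\characteristic{U}$ and $f=\characteristic{E}$, using that such functions take only the values $0$ and $1$: thus $\{\characteristic{E}>t\}=\Renne$ for $t<0$, $\{\characteristic{E}>t\}=E$ for $t\in[0,1)$, $\{\characteristic{E}>t\}=\emptyset$ for $t\ge1$, while $\{\characteristic{E}\ge t\}=E$ for $t\in(0,1]$, and symmetrically for $\characteristic{U}$.

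For \ref{item:BINfromGPtoP}, starting from $U\in\GeometricProblemSolutions{E,\Lambda}$ I would first observe that $\sEnergyGeometric{(U;E,\Lambda)}<+\infty$ together with $|E|<+\infty$ gives $\Ps(U)<+\infty$ and $|U|\le|E|+|E\Delta U|<+\infty$, hence $\characteristic{U}\in\Wspace{s}{1}(\Renne)$, so that \Cref{prop:equivalenceFuncGeomPBs} is applicable. I would then use its part \ref{item:fromGPtoP} with the pair of superlevel sets $(\{\characteristic{U}>t\})_{t},(\{\characteristic{E}>t\})_{t}$: discarding the trivial case $|E|=|U|=0$, the relevant range $\essinf\characteristic{U}\wedge\essinf\characteristic{E}<t<\esssup\characteristic{U}\vee\esssup\characteristic{E}$ equals $(0,1)$, and for every such $t$ one has $\{\characteristic{U}>t\}=U\in\GeometricProblemSolutions{E,\Lambda}=\GeometricProblemSolutions{\{\characteristic{E}>t\},\Lambda}$ by hypothesis; hence $\characteristic{U}\in\FunctionalProblemSolutions{\characteristic{E},\Lambda}$.

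For \ref{item:BINfromPtoGP}, I would start from $u\in\FunctionalProblemSolutions{\characteristic{E},\Lambda}$, which belongs to $\Wspace{s}{1}(\Renne)$ by \Cref{prop:existenceL1Datum}, and exploit \Cref{prop:equivalenceFuncGeomPBs}\ref{item:fromPtoGP} level by level. To prove $u\le1$ a.e. I would note that the only solution of $\GeometricProblem{\emptyset}{\Lambda}$ is $\emptyset$ (minimizing $W\mapsto\Ps(W)+\Lambda|W|$), so that \ref{item:fromPtoGP} at $t=1$ yields $\{u>1\}=\emptyset$; symmetrically, \Cref{rem:Solutions with complement datum} gives $\GeometricProblemSolutions{\Renne,\Lambda}=\{\Renne\}$, so \ref{item:fromPtoGP} at any $t<0$ yields $\{u>t\}=\Renne$, i.e. $|\{u\le t\}|=0$, and letting $t\uparrow0$ gives $u\ge0$ a.e. For $t\in(0,1)$ one has $\{\characteristic{E}>t\}=E$, so \ref{item:fromPtoGP} directly gives $\{u>t\}\in\GeometricProblemSolutions{E,\Lambda}$; the endpoint $t=0$ is handled by \Cref{rem:zero super level is a local minimum}, which (since $|\{\characteristic{E}>0\}|=|E|<+\infty$) ensures that $\{u>0\}$ is in fact a global minimum of $\GeometricProblem{E}{\Lambda}$. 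Together these cover all $t\in[0,1)$. Finally, running the same argument with the pair $(\{u\ge t\})_{t},(\{\characteristic{E}\ge t\})_{t}$ and using $\{\characteristic{E}\ge t\}=E$ for $t\in(0,1]$ gives $\{u\ge t\}\in\GeometricProblemSolutions{E,\Lambda}$ for every such $t$.

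The argument is mostly bookkeeping with level sets of characteristic functions. The one genuinely delicate point is the endpoint $t=0$ in \ref{item:BINfromPtoGP}, where \Cref{prop:equivalenceFuncGeomPBs}\ref{item:fromPtoGP} is not available and one must invoke \Cref{rem:zero super level is a local minimum} together with the standing assumption $|E|<+\infty$; one should also keep an eye on the degenerate cases $|E|=0$ (which forces $\characteristic{E}=0$, making everything trivial) and $|U|=0$, but these cause no difficulty.
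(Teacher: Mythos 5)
Your proof is correct and follows exactly the paper's route: the paper gives no separate proof, presenting the proposition as an immediate consequence of \Cref{prop:equivalenceFuncGeomPBs} and \Cref{rem:zero super level is a local minimum}, which is precisely what you carry out (with the extra, welcome, bookkeeping for $0\le u\le1$, the degenerate cases, and the endpoint $t=0$).
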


\begin{remark}[Binary Data]\label{rem:EquivalenceProblems}
	\Cref{cor:BINequivalenceFuncGeomPBs} entails that a binary datum for \ref{Pb:FunctionalProblem} always implies the existence of binary solutions, although it is not guaranteed that every solution is of this form (see \Cref{thm:convexsCheeger}). In particular, if $u$ is an arbitrary solution of $\FunctionalProblem{\characteristic{E}}{\Lambda}$, then $\characteristic{\{u>t\}}\in\FunctionalProblemSolutions{\characteristic{E},\Lambda}$ for all $t\in[0,1)$ and $\characteristic{\{u\geq t\}}\in\FunctionalProblemSolutions{\characteristic{E},\Lambda}$ for all $t\in(0,1]$. See also \cite[Theorem 5.2]{ChaEse} and \cite[Proposition 2.2 and 2.3]{YinGolOsh} for the analogous results in the classical case.
	Furthermore, if $U$ is any solution of \ref{Pb:GeometricProblem}, then $\characteristic{U}\in\FunctionalProblemSolutions{\characteristic{E},\Lambda}$.
\end{remark}	

The following lemma is a straightforward adaptation of \cite[Corollary 4.1]{ChaEse}.
\begin{lemma}\label{lem:DiscontinuityParameter}
	Let $s\in(0,1)$, $f\in\Lspace{1}(\Renne)$ and for all $\Lambda>0$ let	
	\begin{equation*}
		\distSolDatPLUS(\Lambda)=\distSolDatPLUS(f,\Lambda):=\sup \Set{\norm{u-f}_{\Lspace{1}(\Renne)}}{u\in\FunctionalProblemSolutions{f,\Lambda}};
	\end{equation*}
	\begin{equation*}
		\distSolDatMINUS(\Lambda)=\distSolDatMINUS(f,\Lambda):=\inf \Set{\norm{u-f}_{\Lspace{1}(\Renne)}}{u\in\FunctionalProblemSolutions{f,\Lambda}};
	\end{equation*}
	\begin{equation*}
		\FunctionalProblemJumps{f}:=\Set{\Lambda>0}{\distSolDatMINUS(f, \Lambda)<\distSolDatPLUS(f, \Lambda)}.
	\end{equation*}
	Then, 
	\begin{equation}\label{eq:DistanceDecreasing}
		\distSolDatMINUS(\Lambda_1) \leq \distSolDatPLUS(\Lambda_1) \leq \distSolDatMINUS(\Lambda_2) \leq \distSolDatPLUS(\Lambda_2)
	\end{equation}
	for all $\Lambda_1\geq\Lambda_2>0$.
	
	Therefore, $\distSolDatPLUSMINUS$ are decreasing functions which are discontinuous at each point of $\FunctionalProblemJumps{f}$. 
\end{lemma}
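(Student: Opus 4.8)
The plan is to reduce the whole statement to a single comparison inequality relating solutions at two different fidelity levels, from which both the monotonicity chain \eqref{eq:DistanceDecreasing} and the discontinuity at each point of $\FunctionalProblemJumps{f}$ follow by soft arguments. First I would record two preliminary facts: by \Cref{prop:existenceL1Datum} the sets $\FunctionalProblemSolutions{f,\Lambda}$ are nonempty for every $\Lambda>0$, and testing the minimality of any $u\in\FunctionalProblemSolutions{f,\Lambda}$ against the competitor $w=0$ gives $\norm{u-f}_{\Lspace{1}(\Renne)}\le\norm{f}_{\Lspace{1}(\Renne)}$, so $\distSolDatPLUSMINUS(\Lambda)$ are well defined and lie in $[0,\norm{f}_{\Lspace{1}(\Renne)}]$. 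The two outer inequalities in \eqref{eq:DistanceDecreasing}, namely $\distSolDatMINUS(\Lambda)\le\distSolDatPLUS(\Lambda)$, are then immediate, since an infimum never exceeds the matching supremum over the same (nonempty) set.

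The key step is the following: for $\Lambda_1>\Lambda_2>0$, $u_1\in\FunctionalProblemSolutions{f,\Lambda_1}$ and $u_2\in\FunctionalProblemSolutions{f,\Lambda_2}$, one has $\norm{u_1-f}_{\Lspace{1}(\Renne)}\le\norm{u_2-f}_{\Lspace{1}(\Renne)}$. To obtain it I would test $u_1$ against $u_2$ in $\FunctionalProblem{f}{\Lambda_1}$ and $u_2$ against $u_1$ in $\FunctionalProblem{f}{\Lambda_2}$:
\begin{align*}
\sTotalVar{u_1}+\Lambda_1\norm{u_1-f}_{\Lspace{1}(\Renne)}&\le\sTotalVar{u_2}+\Lambda_1\norm{u_2-f}_{\Lspace{1}(\Renne)},\\
\sTotalVar{u_2}+\Lambda_2\norm{u_2-f}_{\Lspace{1}(\Renne)}&\le\sTotalVar{u_1}+\Lambda_2\norm{u_1-f}_{\Lspace{1}(\Renne)}.
\end{align*}
Both fractional total variations are finite (the $u_i$ belong to $\Wspace{s}{1}(\Renne)$), so summing the two inequalities and cancelling them leaves
\begin{align*}
(\Lambda_1-\Lambda_2)\big(\norm{u_1-f}_{\Lspace{1}(\Renne)}-\norm{u_2-f}_{\Lspace{1}(\Renne)}\big)\le 0,
\end{align*}
and the claim follows because $\Lambda_1>\Lambda_2$. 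Taking first the supremum over $u_1\in\FunctionalProblemSolutions{f,\Lambda_1}$ and then the infimum over $u_2\in\FunctionalProblemSolutions{f,\Lambda_2}$ gives $\distSolDatPLUS(\Lambda_1)\le\distSolDatMINUS(\Lambda_2)$, which is the remaining (middle) inequality in \eqref{eq:DistanceDecreasing}; in particular $\distSolDatPLUSMINUS$ are non-increasing on $(0,+\infty)$. For $\Lambda_1=\Lambda_2$ the chain carries no information beyond the trivial $\distSolDatMINUS\le\distSolDatPLUS$.

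Finally, I would deduce the discontinuity statement directly from \eqref{eq:DistanceDecreasing}. Fix $\Lambda_0\in\FunctionalProblemJumps{f}$, so $\distSolDatMINUS(\Lambda_0)<\distSolDatPLUS(\Lambda_0)$. Applying the middle inequality to the pair $(\Lambda,\Lambda_0)$ with $\Lambda>\Lambda_0$ gives $\distSolDatPLUS(\Lambda)\le\distSolDatMINUS(\Lambda_0)$ for all $\Lambda>\Lambda_0$; since $\distSolDatPLUS$ is monotone, its right limit at $\Lambda_0$ exists and is $\le\distSolDatMINUS(\Lambda_0)<\distSolDatPLUS(\Lambda_0)$, so $\distSolDatPLUS$ is discontinuous at $\Lambda_0$. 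Symmetrically, applying it to $(\Lambda_0,\Lambda)$ with $0<\Lambda<\Lambda_0$ gives $\distSolDatPLUS(\Lambda_0)\le\distSolDatMINUS(\Lambda)$ for all $\Lambda<\Lambda_0$, hence the left limit of $\distSolDatMINUS$ at $\Lambda_0$ is $\ge\distSolDatPLUS(\Lambda_0)>\distSolDatMINUS(\Lambda_0)$, so $\distSolDatMINUS$ is discontinuous at $\Lambda_0$ as well.

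I do not expect any genuinely hard step here: the lemma is a formal consequence of the two minimality inequalities. The only points requiring care are bookkeeping ones — keeping straight which of $\distSolDatPLUSMINUS$ is a supremum and which an infimum when passing the comparison inequality to one-sided limits, and observing that the four-term chain \eqref{eq:DistanceDecreasing} is meaningful only for $\Lambda_1>\Lambda_2$ — which is presumably why the authors leave the details to the reader.
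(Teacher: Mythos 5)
Your proof is correct and is exactly the standard comparison argument (testing each minimizer against the other and summing the two minimality inequalities) that the paper alludes to when it calls the lemma ``a straightforward adaptation of \cite[Corollary 4.1, Claim 5]{ChaEse}'' and leaves the details to the reader. Your observation that the chain \eqref{eq:DistanceDecreasing} must be read with $\Lambda_1>\Lambda_2$ in the middle inequality (otherwise it would force $\distSolDatMINUS=\distSolDatPLUS$ and empty the jump set) is an appropriate and correct reading of the statement, and the deduction of two-sided discontinuity of both $\distSolDatPLUS$ and $\distSolDatMINUS$ at points of $\FunctionalProblemJumps{f}$ is sound.
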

\begin{proof}
	\newcommand{\greatFidelity}{\Lambda_{1}}
	\newcommand{\lowFidelity}{\Lambda_{2}}
	\newcommand{\ugreat}{u_{\greatFidelity}}
	\newcommand{\ulow}{u_{\lowFidelity}}
	We claim that for all $\greatFidelity>\lowFidelity>0$, if $\ugreat\in\FunctionalProblemSolutions{f, \greatFidelity}$ and $\ulow\in\FunctionalProblemSolutions{f, \lowFidelity}$, then
	\begin{equation*}
		\norm{\ulow-f}_{\Lspace{1}(\Renne)}\geq\norm{\ugreat-f}_{\Lspace{1}(\Renne)}.
	\end{equation*}
	We assume by contradiction that $\norm{\ulow-f}_{\Lspace{1}(\Renne)}<\norm{\ugreat-f}_{\Lspace{1}(\Renne)}$. By minimality of $\ulow$ it follows $\sEnergyLOneFracTV{(\ulow; f, \lowFidelity)}\leq\sEnergyLOneFracTV{(\ugreat; f, \lowFidelity)}$, which allows us to write:
	\begin{align*}
		&\sEnergyLOneFracTV{(\ulow; f, \greatFidelity)}=\sEnergyLOneFracTV{(\ulow; f, \lowFidelity)}+(\greatFidelity-\lowFidelity)\norm{\ulow-f}_{\Lspace{1}(\Renne)}\\
		&\leq \sEnergyLOneFracTV{(\ugreat; f, \lowFidelity)}+(\greatFidelity-\lowFidelity)\norm{\ulow-f}_{\Lspace{1}(\Renne)}\\
		&<\sEnergyLOneFracTV{(\ugreat; f, \lowFidelity)}+(\greatFidelity-\lowFidelity)\norm{\ugreat-f}_{\Lspace{1}(\Renne)}=\sEnergyLOneFracTV{(\ugreat; f, \greatFidelity)}.
	\end{align*}
Since $\ugreat$ is a solution of $\FunctionalProblem{f}{\greatFidelity}$, we get to a contradiction, thus proving the claim. \eqref{eq:DistanceDecreasing} simply follows from this claim and the definitions of $\distSolDatPLUSMINUS$. Finally, we fix $\Lambda\in\FunctionalProblemJumps{f}$ and, by \eqref{eq:DistanceDecreasing}, we observe that 
\begin{align*}
	\lim\limits_{\lambda\to\Lambda^-}\distSolDatPLUS(\lambda)=\lim\limits_{\lambda\to\Lambda^-}\distSolDatMINUS(\lambda)\geq\distSolDatPLUS(\Lambda)>\distSolDatMINUS(\Lambda)\geq\lim\limits_{\lambda\to\Lambda^+}\distSolDatPLUS(\lambda)=\lim\limits_{\lambda\to\Lambda^+}\distSolDatMINUS(\lambda).
\end{align*}
\end{proof}
Arguing exactly as in \cite[Corollary 5.3 and 5.4]{ChaEse}, we are able to prove that if the datum is the characteristic of a bounded and convex set, this suffices to establish uniqueness for $\FunctionalProblem{\characteristic{E}}{\Lambda}$ for almost every $\Lambda>0$. This is the content of the next statement.  

\newcommand{\Ulambda}{U(\Lambda)}
\begin{thm}\label{cor:UniquenessWithConvexDatum}
	Let $s\in(0,1)$ and $E\subset\Renne$ be bounded and convex. Then, for $\lebone-$a.e. $\Lambda>0$ (precisely, for all $\Lambda\in(0,+\infty)\setminus \FunctionalProblemJumps{\characteristic{E}}$) the problem $\FunctionalProblem{\characteristic{E}}{\Lambda}$ admits a unique solution and
	\begin{equation*}
		\FunctionalProblemSolutions{\characteristic{E},\Lambda}=\{\characteristic{\Ulambda}\},
	\end{equation*}
	for some $\Ulambda\in\measurablesets$ such that $\Ulambda\subseteq E$.
\end{thm}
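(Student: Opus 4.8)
The plan is to transfer the problem to the geometric formulation \ref{Pb:GeometricProblem} with datum $E$ — which is well posed, since a bounded convex $E$ has $\Ps(E)<+\infty$, so \ref{Pb:FunctionalProblem} with datum $\characteristic{E}\in\Lspace{1}(\Renne)$ admits solutions (\Cref{prop:existenceL1Datum}) whose superlevel sets are geometric solutions (\Cref{cor:BINequivalenceFuncGeomPBs}), in particular $\GeometricProblemSolutions{E,\Lambda}\neq\emptyset$ — and to combine the lattice structure of $\GeometricProblemSolutions{E,\Lambda}$ with the monotonicity in \Cref{lem:DiscontinuityParameter}. Two geometric facts are at the core. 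First, every $U\in\GeometricProblemSolutions{E,\Lambda}$ satisfies $U\subseteq E$: indeed $|E\Delta(U\cap E)|=|E\setminus U|\le|E\Delta U|$ and, $E$ being convex and $|U|<+\infty$, $\Ps(U\cap E)\le\Ps(U)$; since $U$ is a minimizer, $\sEnergyGeometric{(U;E,\Lambda)}\le\sEnergyGeometric{(U\cap E;E,\Lambda)}$, but $\sEnergyGeometric{(U;E,\Lambda)}-\sEnergyGeometric{(U\cap E;E,\Lambda)}=\big(\Ps(U)-\Ps(U\cap E)\big)+\Lambda|U\setminus E|\ge0$, so this difference vanishes and $|U\setminus E|=0$. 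Second, by submodularity of $\Ps$ and the pointwise identity $|E\Delta(U\cap W)|+|E\Delta(U\cup W)|=|E\Delta U|+|E\Delta W|$, the set $\GeometricProblemSolutions{E,\Lambda}$ is stable under finite intersections and unions; combined with \Cref{prop:stabilityGP} and the bound $|U|\le|E|$, this provides a minimal solution $\Emini$ and a maximal solution $\Emaxi$ (in the sense of \Cref{defn:MaxMinSolutions}) with $\Emini\subseteq U\subseteq\Emaxi\subseteq E$ for every $U\in\GeometricProblemSolutions{E,\Lambda}$.

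Next I would identify $\distSolDatPLUSMINUS(\characteristic{E},\Lambda)$. For any $u\in\FunctionalProblemSolutions{\characteristic{E},\Lambda}$ one has $0\le u\le1$ and each superlevel set $\{u>t\}$ with $t\in[0,1)$ lies in $\GeometricProblemSolutions{E,\Lambda}$ by \Cref{cor:BINequivalenceFuncGeomPBs}\ref{item:BINfromPtoGP}; in particular $u$ vanishes a.e.\ outside $E$ and $|\Emini|\le|\{u>t\}|\le|\Emaxi|$ for every $t\in[0,1)$, so the layer-cake formula yields
\[
\norm{u-\characteristic{E}}_{\Lspace{1}(\Renne)}=\int_{E}(1-u)=|E|-\int_0^1|\{u>t\}|\integralde t\in\big[\,|E|-|\Emaxi|,\;|E|-|\Emini|\,\big].
\]
Both endpoints are attained, at $u=\characteristic{\Emaxi}$ and $u=\characteristic{\Emini}$ respectively, which are solutions of \ref{Pb:FunctionalProblem} by \Cref{cor:BINequivalenceFuncGeomPBs}\ref{item:BINfromGPtoP}. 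Hence $\distSolDatMINUS(\characteristic{E},\Lambda)=|E|-|\Emaxi|$ and $\distSolDatPLUS(\characteristic{E},\Lambda)=|E|-|\Emini|$.

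Then I would conclude as follows. Fix $\Lambda\in(0,+\infty)\setminus\FunctionalProblemJumps{\characteristic{E}}$, so that $\distSolDatMINUS(\characteristic{E},\Lambda)=\distSolDatPLUS(\characteristic{E},\Lambda)$; by the previous step $|\Emaxi|=|\Emini|$, and since $\Emini\subseteq\Emaxi$ this forces $\Emini=\Emaxi=:\Ulambda\subseteq E$. Therefore \ref{Pb:GeometricProblem} has $\Ulambda$ as its only solution, and if $u\in\FunctionalProblemSolutions{\characteristic{E},\Lambda}$ then $\{u>t\}=\Ulambda$ for all $t\in[0,1)$; using $\{u>0\}=\Ulambda$ and letting $t\uparrow1$ through rationals (with $0\le u\le1$) gives $u=1$ a.e.\ on $\Ulambda$ and $u=0$ a.e.\ on $\comp{\Ulambda}$, i.e.\ $u=\characteristic{\Ulambda}$, so $\FunctionalProblemSolutions{\characteristic{E},\Lambda}=\{\characteristic{\Ulambda}\}$. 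Finally, $\distSolDatPLUS(\characteristic{E},\cdot)$ is monotone by \Cref{lem:DiscontinuityParameter}, hence its set of discontinuity points is at most countable; as $\FunctionalProblemJumps{\characteristic{E}}$ is contained in it, $\lebone\big(\FunctionalProblemJumps{\characteristic{E}}\big)=0$, which gives the statement for $\lebone$-a.e.\ $\Lambda>0$.

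The main obstacle is the second half of the first paragraph: the existence of the extremal geometric solutions $\Emini,\Emaxi$ and the inclusion $\Emaxi\subseteq E$. The former is the lattice/compactness argument behind \Cref{defn:MaxMinSolutions}; the latter is exactly the point where convexity of $E$ enters, through $\Ps(U\cap K)\le\Ps(U)$ for convex $K$ and $|U|<+\infty$. Everything else is routine manipulation with the layer-cake formula and \Cref{lem:DiscontinuityParameter}.
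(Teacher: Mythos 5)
Your proof is correct, and it reaches the conclusion by a somewhat different route than the paper. The paper fixes an arbitrary solution $u_\Lambda$, observes that its superlevel sets $\{u_\Lambda>t\}$, $t\in[0,1)$, are automatically nested, and shows that if two of them were distinct then $\characteristic{\{u_\Lambda>t_1\}}$ and $\characteristic{\{u_\Lambda>t_2\}}$ would be two solutions at different $L^1$ distances from $\characteristic{E}$ (using convexity of $E$ to place both inside $E$), contradicting $\Lambda\notin\FunctionalProblemJumps{\characteristic{E}}$; this makes every solution binary, and a final averaging argument (convexity of $\FunctionalProblemSolutions{\characteristic{E},\Lambda}$) identifies all binary solutions with one another. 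You instead invoke the lattice structure of $\GeometricProblemSolutions{E,\Lambda}$ to produce the extremal solutions $\Emini\subseteq\Emaxi\subseteq E$ — essentially \Cref{lem:maximum and minimum solutions}, which the paper states and proves only \emph{after} this theorem, though with no circularity — and you obtain the sharper identities $\distSolDatMINUS=|E|-|\Emaxi|$ and $\distSolDatPLUS=|E|-|\Emini|$, so that $\Lambda\notin\FunctionalProblemJumps{\characteristic{E}}$ directly forces $\Emini=\Emaxi$ and hence uniqueness of the geometric solution; the functional uniqueness then follows without any averaging step. What your approach buys is a cleaner global statement (the jump set is exactly where $|\Emaxi|>|\Emini|$, and off it even the geometric problem has a unique solution) at the cost of needing the existence of minimal and maximal geometric solutions, which the paper's argument avoids by exploiting only the free nestedness of the superlevel sets of a single function. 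Both arguments use the same essential inputs: \Cref{lem:DiscontinuityParameter}, \Cref{cor:BINequivalenceFuncGeomPBs}, and $\Ps(U\cap E)\le\Ps(U)$ for convex $E$. Your sketch of the extremal-solution step is the one place you lean on an argument you do not fully write out, but you correctly identify it and it is supplied verbatim by the paper's Claims 1--4 in the proof of \Cref{lem:maximum and minimum solutions}.
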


\begin{proof}[Proof of \Cref{cor:UniquenessWithConvexDatum}]
	\newcommand{\ulambda}{u_{\Lambda}}
	\newcommand{\ulambdaone}{u_{\Lambda}^1}
	\newcommand{\ulambdatwo}{u_{\Lambda}^2}
	\newcommand{\tone}{t_{1}}
	\newcommand{\ttwo}{t_{2}}
	\newcommand{\ti}{t_{i}}
	We first observe that, thanks to \Cref{lem:DiscontinuityParameter}, $\FunctionalProblemJumps{\characteristic{E}}$ is $\lebone$-negligible and we fix $\Lambda\in (0,+\infty)\setminus \FunctionalProblemJumps{\characteristic{E}}$.
	
	As first step, we aim to prove that every solution of $\FunctionalProblem{\characteristic{E}}{\Lambda}$ is binary (cf. \Cref{sec:notation}) with support contained in $E$.
	Let $\ulambda\in\FunctionalProblemSolutions{\characteristic{E},\Lambda}$.
	So, $0\leq \ulambda\leq 1$ $\lebn$-a.e., $\{\ulambda>t\}\in\GeometricProblemSolutions{E,\Lambda}$ and $\characteristic{\{\ulambda>t\}}\in\FunctionalProblemSolutions{\characteristic{E},\Lambda}$ for all $t\in[0,1)$ by \Cref{cor:BINequivalenceFuncGeomPBs}.
	Let $1>\tone>\ttwo\geq 0$ and assume that $\{\ulambda>\tone\}$ is strictly contained in $\{\ulambda>\ttwo\}$ (upon negligible sets). For every $i\in\{1,2\}$, by convexity of $E$, we get
	\begin{align*}
		\Ps(E\cap \{\ulambda>\ti\})\leq\Ps(\{\ulambda>\ti\}).
	\end{align*}
	So, by minimality of $\{\ulambda>\ti\}$, we obtain $\{\ulambda>\tone\}\subsetneqq \{\ulambda>\ttwo\} \subseteq E$ (note that $|(E\cap \{\ulambda>\ti\})\Delta E|$ would be strictly less than $|\{\ulambda>\ti\}\Delta E|$ if $\{\ulambda>\ti\}$ was not contained in $E$). Therefore, $\characteristic{\{\ulambda>\tone\}},\characteristic{\{\ulambda>\ttwo\}}\in\FunctionalProblemSolutions{\characteristic{E},\Lambda}$ and $\norm{\characteristic{\{\ulambda>\tone\}}-\characteristic{E}}_{\Lspace{1}(\Renne)}>\norm{\characteristic{\{\ulambda>\ttwo\}}-\characteristic{E}}_{\Lspace{1}(\Renne)}$. This means that $\distSolDatMINUS(\Lambda,\characteristic{E})<\distSolDatPLUS(\Lambda,\characteristic{E})$ and, consequently, that $\Lambda\in \FunctionalProblemJumps{\characteristic{E}}$, which is a contradiction. Therefore, there exists $\Ulambda\in\measurablesets$ such that $\Ulambda\subseteq E$ and $\{\ulambda>t\}=\Ulambda$ for each $t\in[0,1)$. Being $0\leq \ulambda\leq 1$ $\lebn$-a.e., we conclude that $\ulambda=\characteristic{\Ulambda}$ $\lebn$-a.e. 	
	
	Finally, we prove that each (binary) solution of $\FunctionalProblem{\characteristic{E}}{\Lambda}$ is uniquely determined. This is true since, if $\ulambdaone,\ulambdatwo\in\FunctionalProblemSolutions{\characteristic{E},\Lambda}$ then also $\unmezzo\ulambdaone+\unmezzo\ulambdatwo$is a solution of $\FunctionalProblem{\characteristic{E}}{\Lambda}$, by convexity of the energy $\sEnergyLOneFracTV{(\cdot,\characteristic{E},\Lambda)}$. Since all solutions are binary, this implies that $\ulambdaone=\ulambdatwo$ $\lebn$-a.e. and the previous step allows us to conclude that they are also equal $\lebn$-a.e. to $\characteristic{\Ulambda}$ for some $\Ulambda\in\measurablesets$ such that $\Ulambda\subseteq E$.
\end{proof}

Now we are interested in studying the behaviour of solutions of \ref{Pb:FunctionalProblem} for large values of the fidelity parameter, under some regularity assumptions on the datum. The key result is the following:

\begin{thm}
	\label{thm:lemmetto per lavoro su variazione seconda}
	Let $s\in(0,1)$ and $E\subseteq\Renne$ with $\partial E\in C^{1,1}$ either bounded or with bounded complement.
	Then, there exists $\Lambda(E,s)>0$ s.t. for every $\Lambda\geq\Lambda(E,s)$, the set $E$ is the unique solution of the problem \ref{Pb:GeometricProblem}. That is,
	\begin{equation*}
	\Ps(E)-\Ps(\U)\leq\Lambda|E\Delta \U|
	\end{equation*}
	for every $\U\in\measurablesets$ and $\Lambda\geq\Lambda(E,s)$.
\end{thm}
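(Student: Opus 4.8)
The plan is to settle first the case where $E$ is a ball — which is soft once one invokes the fractional isoperimetric inequality — and then to pass to a general $C^{1,1}$ set by a comparison argument based on submodularity of $\Ps$. Two preliminary observations: the displayed inequality says exactly that $E$ minimizes $U\mapsto\sEnergyGeometric{(U;E,\Lambda)}$ (since $\sEnergyGeometric{(E;E,\Lambda)}=\Ps(E)$), and if it holds with a constant $\Lambda_0$, then for $\Lambda>\Lambda_0$ and $|E\Delta U|>0$ one gets $\Ps(E)-\Ps(U)\leq\Lambda_0|E\Delta U|<\Lambda|E\Delta U|$, so $E$ is the \emph{unique} minimizer; hence it suffices to find one admissible $\Lambda_0$ and put $\Lambda(E,s):=2\Lambda_0$. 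Also, we may assume $|E\Delta U|<\infty$, and — replacing $(E,U)$ by $(\comp E,\comp U)$ and using $\Ps(E)=\Ps(\comp E)$, $|E\Delta U|=|\comp E\Delta\comp U|$ — that $E$ is bounded. Since $\partial E\in C^{1,1}$, both $E$ and $\comp E$ satisfy uniform interior and exterior ball conditions of some radius $r_0=r_0(E)>0$; projecting onto $\partial E$ one checks that $E$ is, up to a null set, the union of all radius-$r_0$ balls it contains, so, choosing a countable dense set of admissible centres, $E=\bigcup_{k\in\N}B_{r_0}(z_k)$, and similarly $\comp E=\bigcup_{k\in\N}B_{r_0}(z_k')$. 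Set $\Lambda_0:=\cn\,\omegan^{-1}r_0^{-s}=\Ps(B_{r_0})/|B_{r_0}|$.

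\emph{Ball case.} For $W\subseteq B_{r_0}$ I claim $\Ps(B_{r_0})-\Ps(W)\leq\Lambda_0|B_{r_0}\setminus W|$. By the fractional isoperimetric inequality (equivalently, $\Ps$ does not increase under Schwarz symmetrization, by Riesz's rearrangement inequality), $\Ps(W)\geq\Ps(B_\rho)=\cn\rho^{n-s}$ with $\omegan\rho^n=|W|\leq\omegan r_0^n$. Writing $u:=\rho/r_0\in[0,1]$ and using $u^{n-s}\geq u^n$,
\begin{equation*}
\Ps(B_{r_0})-\Ps(W)\leq\cn r_0^{n-s}\bigl(1-u^{n-s}\bigr)\leq\cn r_0^{n-s}\bigl(1-u^n\bigr)=\frac{\cn}{\omegan}r_0^{-s}\bigl(|B_{r_0}|-|W|\bigr)=\Lambda_0\,|B_{r_0}\setminus W|.
\end{equation*}

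\emph{Comparison.} The core claim is $(\star)$: if $F$ is a countable union of radius-$r_0$ balls and $V\subseteq F$, then $\Ps(F)-\Ps(V)\leq\Lambda_0|F\setminus V|$. To prove it, write $F=\bigcup_k B_k$ with $B_k=B_{r_0}(z_k)$, set $V_0:=V$ and $V_k:=V_{k-1}\cup B_k$, so $V_k\uparrow F$. By submodularity $\Ps(V_{k-1}\cup B_k)-\Ps(V_{k-1})\leq\Ps(B_k)-\Ps(V_{k-1}\cap B_k)$, and the ball case applied to $V_{k-1}\cap B_k\subseteq B_k$ bounds the right-hand side by $\Lambda_0|B_k\setminus V_{k-1}|=\Lambda_0|V_k\setminus V_{k-1}|$. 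Summing this telescoping estimate and noting the increments $V_k\setminus V_{k-1}$ are disjoint with union $V_N\setminus V$ — each of finite measure, so a possibly infinite $|V|$ is harmless — we get $\Ps(V_N)\leq\Ps(V)+\Lambda_0|V_N\setminus V|$; letting $N\to\infty$, monotone convergence gives $|V_N\setminus V|\to|F\setminus V|$ and $\characteristic{V_N}\to\characteristic{F}$ in $\Llocspace{1}(\Renne)$, and lower semicontinuity of $\Ps$ yields $(\star)$. Finally, for any $U$,
\begin{equation*}
\Ps(E)-\Ps(U)=\bigl[\Ps(E)-\Ps(E\cap U)\bigr]+\bigl[\Ps(E\cap U)-\Ps(U)\bigr];
\end{equation*}
the first bracket is $\leq\Lambda_0|E\setminus U|$ by $(\star)$ with $F=E$, $V=E\cap U$, and the second equals $\Ps(\comp E\cup\comp U)-\Ps(\comp U)\leq\Ps(\comp E)-\Ps(\comp E\cap\comp U)\leq\Lambda_0|\comp E\setminus\comp U|=\Lambda_0|U\setminus E|$, using submodularity and then $(\star)$ with $F=\comp E$, $V=\comp E\cap\comp U$. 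Adding, $\Ps(E)-\Ps(U)\leq\Lambda_0|E\Delta U|$, as required.

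I expect the main obstacle to be not the ball case — which, through the isoperimetric inequality, collapses to the one-variable estimate above — but the organization of the comparison step: decomposing a general $C^{1,1}$ set into radius-$r_0$ balls, and, crucially, keeping track of which of $E$, $\comp E$, $E\cap U$, $\comp E\cap\comp U$ and the increments $V_k\setminus V_{k-1}$ have finite measure, so that submodularity, the limiting argument and lower semicontinuity are all legitimately applied; the reductions to bounded $E$ and to $|E\Delta U|<\infty$ made at the outset are precisely what make this bookkeeping go through.
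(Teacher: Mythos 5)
Your proof is correct, but it follows a genuinely different route from the paper's. The paper first establishes that $\GeometricProblem{\Brx}{\Lambda}$ has $\Brx$ as unique solution for $\Lambda>\frac{\cn}{\omegan}\frac{1}{r^s}$ (via the fractional isoperimetric inequality, as you do), but then builds the general case on variational machinery: existence of minimal and maximal solutions $\Emini,\Emaxi$ of \ref{Pb:GeometricProblem} (obtained from the stability and equivalence results of Section 3), a comparison principle ($\Etwo\subseteq\Eone$ implies $\Etwomini\subseteq\Eonemini$ and $\Etwomaxi\subseteq\Eonemaxi$, proved by a submodularity/equality-case analysis on the measure $\interaction$), and finally the sandwich $E\subseteq\Emini\subseteq\Emaxi\subseteq E$ obtained by comparing with the interior and exterior balls. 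You instead prove the quantitative inequality $\Ps(E)-\Ps(U)\leq\Lambda_0|E\Delta U|$ directly: a sharp one-ball estimate $\Ps(\BallRadius{\rzero})-\Ps(W)\leq\Lambda_0|\BallRadius{\rzero}\setminus W|$ (your elementary $1-u^{n-s}\leq 1-u^n$ computation is the correct quantitative form of the paper's Lemma on balls), then an iterated submodularity/telescoping argument to propagate it to any countable union of radius-$\rzero$ balls, and a final splitting of $\Ps(E)-\Ps(U)$ handling $E\setminus U$ and $U\setminus E$ separately via complementation. Your argument is more self-contained and more elementary — it bypasses the existence of $\Emaxi,\Emini$ and the comparison lemma entirely, and it directly produces the explicit constant (essentially the same $\frac{\cn}{\omegan\rzero^s}$ as in \Cref{rem:lemmetto}); the finiteness bookkeeping you flag (reducing to $\Ps(U)<\infty$ and $|E\Delta U|<\infty$, and noting each $V_k$ has finite perimeter by induction) is exactly what is needed and is handled correctly. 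What the paper's longer route buys is the auxiliary structure — maximal/minimal solutions and the comparison principle of \Cref{lem:Comparison} — which it reuses elsewhere and which is of independent interest, whereas your telescoping estimate is single-purpose.
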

\begin{remark}\label{rem:lemmetto}
	From the proof of \Cref{thm:lemmetto per lavoro su variazione seconda}, we deduce that we can choose $\Lambda(E,s)=\frac{\cn}{\omegan}\frac{1}{{\rzero}^s}$, where $\rzero=\rzero(E)>0$ is such that both $E$ and its complement are the union of balls of radius $\rzero$.
\end{remark}

Before moving to the proof of \Cref{thm:lemmetto per lavoro su variazione seconda} we need some auxiliary results.
\begin{lemma}[maximal and minimal solutions]
	\label{lem:maximum and minimum solutions}
	Let $s\in(0,1)$, $\Lambda>0$ and $E\in\measurablesets$ be bounded.
	Then \ref{Pb:GeometricProblem} admits a minimal and a maximal (w.r.t. set inclusion) solution and they are bounded.	
\end{lemma}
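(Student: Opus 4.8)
The plan is to exploit the submodularity of the fractional perimeter to give the set of solutions of \ref{Pb:GeometricProblem} a lattice structure, and then to extract its minimal and maximal elements by a monotone approximation. First I would dispose of two preliminaries. Existence of at least one solution is already available: since $E$ is bounded, $\characteristic{E}\in\Lspace{1}(\Renne)$, so \Cref{prop:existenceL1Datum} and \Cref{cor:BINequivalenceFuncGeomPBs} provide a solution of \ref{Pb:GeometricProblem}. Moreover, $\emptyset$ is an admissible competitor with $\sEnergyGeometric{(\emptyset;E,\Lambda)}=\Lambda|E|<+\infty$, so the minimum value $m^\ast$ is finite and every solution $U$ satisfies $\Lambda|E\Delta U|\le m^\ast\le\Lambda|E|$, whence $|U|\le|E\Delta U|+|E|\le 2|E|<+\infty$: all solutions have finite, uniformly bounded measure.

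Next I would show that every solution is bounded. Fix $R>0$ with $E\subseteq\BR$ and compare a solution $U$ with $U\cap\BR$: since the ball is convex and $|U|<+\infty$, the property recalled in \Cref{sec:notation} gives $\Ps(U\cap\BR)\le\Ps(U)$, while the inclusion $E\subseteq\BR$ yields the elementary identity $|E\Delta(U\cap\BR)|=|E\Delta U|-|U\setminus\BR|$. Minimality of $U$ then forces $\Lambda|U\setminus\BR|\le\Ps(U\cap\BR)-\Ps(U)\le 0$, so $|U\setminus\BR|=0$: every solution is contained in $\BR$, hence bounded. This settles the boundedness assertion once minimal and maximal solutions are shown to exist.

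For that, the crucial fact is that $\sEnergyGeometric{(\cdot;E,\Lambda)}$ is submodular: adding the submodularity inequality for $\Ps$ to the pointwise identity $|E\Delta(U_1\cap U_2)|+|E\Delta(U_1\cup U_2)|=|E\Delta U_1|+|E\Delta U_2|$ (a two-point check on characteristic functions) gives $\sEnergyGeometric{(U_1\cap U_2)}+\sEnergyGeometric{(U_1\cup U_2)}\le\sEnergyGeometric{(U_1)}+\sEnergyGeometric{(U_2)}$, so $\GeometricProblemSolutions{E,\Lambda}$ is closed under finite intersections and unions. To get the minimal solution, set $m:=\inf\{|U|:U\in\GeometricProblemSolutions{E,\Lambda}\}$ and pick solutions $U_k$ with $|U_k|\to m$; then $V_k:=U_1\cap\cdots\cap U_k$ is a decreasing sequence of solutions with $m\le|V_k|\le\min_{j\le k}|U_j|\to m$, hence $|V_k|\to m$. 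Being decreasing with $|V_1|<+\infty$, the $V_k$ converge in $\Lspace{1}(\Renne)$ to $V:=\bigcap_k V_k$; by lower semicontinuity of $\Ps$ for $\Llocspace{1}(\Renne)$-convergence and continuity of $W\mapsto|E\Delta W|$ for $\Lspace{1}(\Renne)$-convergence, $\sEnergyGeometric{(V)}\le\liminf_k\sEnergyGeometric{(V_k)}=m^\ast$, so $V$ is a solution with $|V|=m$. Minimality is then automatic: for any solution $W$, the set $V\cap W$ is a solution with $m\le|V\cap W|\le|V|=m$, hence $|V\setminus W|=|V|-|V\cap W|=0$, i.e. $V\subseteq W$. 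The maximal solution is obtained symmetrically, replacing intersections by unions and $\inf$ by $M:=\sup\{|U|:U\in\GeometricProblemSolutions{E,\Lambda}\}\le 2|E|$.

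The individual steps are routine; the genuinely essential ingredient is the lattice/submodularity structure --- without it there is no reason for a minimal or maximal solution to exist. The only other point requiring attention is running the monotone approximation in the right topology: the decreasing (resp.\ increasing) sequences converge in $\Lspace{1}(\Renne)$, not merely in $\Llocspace{1}(\Renne)$, precisely because all solutions have measure at most $2|E|$, and it is this full $\Lspace{1}$-convergence that makes the continuity of the fidelity term $W\mapsto|E\Delta W|$ usable in the lower-semicontinuity step.
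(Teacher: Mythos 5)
Your proof is correct and follows essentially the same strategy as the paper's: reduction to competitors contained in a ball containing $E$, closure of the solution set under finite unions and intersections, stability under monotone countable limits, and a monotone minimizing (resp.\ maximizing) sequence for the Lebesgue measure. The only real difference is in the lattice step, which you derive directly from submodularity of $\Ps$ together with the pointwise identity for the fidelity term, whereas the paper obtains it by passing to the functional problem and using convexity of the solution set via $\unmezzo\left(\characteristic{\Uone}+\characteristic{\Utwo}\right)$; both arguments are valid, and yours is slightly more self-contained.
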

\begin{remark}
	If \ref{Pb:GeometricProblem} admits a minimal (respectively maximal) solution, it is unique upon $\lebn$-negligible sets.  
\end{remark}
\begin{proof}
	\newcommand{\A}{\GeometricProblemSolutions{E,\Lambda}}
	We follow an argument similar to the one in \cite[Proposition 6.1]{CMP}.
	
	Let $R>0$ s.t. $E\subset\BR$ and $\U\in\measurablesets$. Without loss of generality, we assume $|U|<+\infty$, otherwise $\sEnergyGeometric{(U; E, \Lambda)=+\infty}$. Then, $\Ps(\U\cap\BR)\leq\Ps(\U)$ since $\BR$ is convex, and $|E\Delta(\U\cap\BR)|=|(E\cap\BR)\Delta(\U\cap\BR)|\leq|E\Delta \U|$. Furthermore, the last inequality is strict when $\U\nsubseteqq\BR$. So \ref{Pb:GeometricProblem} is indeed equivalent to the following minimum problem:
	\begin{align}
	\label{eq:EquivPB bound}
	\min_{\U\in\measurablesets, \U\subseteq\BR}\Ps(\U)+\Lambda|E\Delta \U|.
	\end{align}
	We now make the following assertions:
	\begin{claim}{1}
		\label{claim 1}
		$\A\neq\emptyset$.
	\end{claim}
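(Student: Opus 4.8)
The plan is to prove existence of a minimizer by the direct method of the calculus of variations, exploiting the reduction to competitors contained in a fixed ball that was just established. First I would check that the infimum $m:=\inf_{\U\in\measurablesets}\sEnergyGeometric{(\U;E,\Lambda)}$ is finite: using $\U=\emptyset$ as a competitor gives $\sEnergyGeometric{(\emptyset;E,\Lambda)}=\Ps(\emptyset)+\Lambda|E\Delta\emptyset|=\Lambda|E|<+\infty$, since $E$ is bounded and hence of finite $\lebn$-measure. In particular $0\leq m\leq\Lambda|E|<+\infty$.

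Next I would take a minimizing sequence $\sequence{\Uk}{k}\subseteq\measurablesets$, i.e. $\sEnergyGeometric{(\Uk;E,\Lambda)}\to m$. By the equivalence with \eqref{eq:EquivPB bound} proved above, there is no loss of generality in assuming $\Uk\subseteq\BR$ for every $k$, since replacing $\Uk$ by $\Uk\cap\BR$ does not increase the energy. For $k$ large we then have $\Ps(\Uk)\leq\sEnergyGeometric{(\Uk;E,\Lambda)}\leq m+1$, so the $s$-perimeters $\Ps(\Uk)=\sTotalVar{\characteristic{\Uk}}$ are uniformly bounded, and moreover $|\Uk|\leq|\BR|$ for all $k$.

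The core of the argument is a compactness step. Since the Gagliardo seminorms $[\characteristic{\Uk}]_{\Wspace{s}{1}(\BR)}\leq 2\Ps(\Uk)$ and the norms $\norm{\characteristic{\Uk}}_{\Lspace{1}(\BR)}=|\Uk|$ are both uniformly bounded, the functions $\characteristic{\Uk}$ are bounded in $\Wspace{s}{1}(\BR)$. The compact embedding $\Wspace{s}{1}\compactlyimmersed\Llocspace{1}$ already invoked in \Cref{prop:existenceL1Datum} (see \cite[Theorem 7.1]{DiNPalVal}) then provides a subsequence, not relabelled, and a set $\U\in\measurablesets$ with $\Uk\to\U$ in $\Llocspace{1}(\Renne)$; as all the $\Uk$ are contained in $\BR$, the convergence is in fact in $\Lspace{1}(\Renne)$ and $\U\subseteq\BR$.

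Finally I would pass to the limit. Lower semicontinuity of $\Ps$ with respect to $\Llocspace{1}(\Renne)$ convergence yields $\Ps(\U)\leq\liminf_k\Ps(\Uk)$, while $\Lspace{1}(\Renne)$ convergence gives $|E\Delta\Uk|\to|E\Delta\U|$, because $|E\Delta\,\cdot\,|=\norm{\characteristic{E}-\characteristic{\,\cdot\,}}_{\Lspace{1}(\Renne)}$ is $\Lspace{1}$-continuous; hence
\[
\sEnergyGeometric{(\U;E,\Lambda)}\leq\liminf_{k\to+\infty}\sEnergyGeometric{(\Uk;E,\Lambda)}=m,
\]
so $\U$ attains the infimum and $\A\neq\emptyset$. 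I expect the compactness step to be the main obstacle: it relies on the fractional compact embedding, and it is crucial to have first reduced to competitors supported in the fixed ball $\BR$, for otherwise the embedding is only local and a minimizing sequence could lose mass at infinity, destroying convergence of the fidelity term.
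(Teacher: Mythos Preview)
Your argument is correct: the direct method works here, and the reduction to competitors in $\BR$ (already carried out just before the claim) is exactly what makes the compactness step go through.

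However, the paper proceeds differently and much more briefly: it deduces Claim~1 from \Cref{cor:BINequivalenceFuncGeomPBs}\,\ref{item:BINfromPtoGP}. Concretely, since $|E|<+\infty$, \Cref{prop:existenceL1Datum} guarantees the existence of some $u\in\FunctionalProblemSolutions{\characteristic{E},\Lambda}$, and then \Cref{cor:BINequivalenceFuncGeomPBs}\,\ref{item:BINfromPtoGP} yields $\{u>t\}\in\GeometricProblemSolutions{E,\Lambda}$ for any $t\in[0,1)$, so the set of geometric solutions is nonempty. The point is that the compactness argument you wrote out has already been performed once, in the proof of \Cref{prop:existenceL1Datum}, and the equivalence between the functional and geometric problems lets one transfer existence for free. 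Your route is entirely self-contained and perhaps conceptually cleaner for the geometric problem in isolation, but it duplicates the compact embedding step; the paper's route is a one-line appeal to results already on the shelf.
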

		This is a consequence of \Cref{prop:existenceL1Datum} and \Cref{cor:BINequivalenceFuncGeomPBs} part $\ref{item:BINfromPtoGP}$.		
	\begin{claim}{2}
		$\A$ is stable under finite intersection and finite union.
	\end{claim}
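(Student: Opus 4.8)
The plan is to use the submodularity of the $s$-perimeter together with an elementary pointwise identity for symmetric differences, and then to invoke minimality. By induction on the number of sets it suffices to show that if $U,V\in\A$ then $U\cap V\in\A$ and $U\cup V\in\A$.

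First I would check that every quantity appearing below is finite, so that the additive manipulations are legitimate. Since $U,V\in\A$ we have $\sEnergyGeometric{(U;E,\Lambda)},\sEnergyGeometric{(V;E,\Lambda)}<+\infty$; as $E$ is bounded this forces $|U|,|V|<+\infty$, whence $|U\cap V|,|U\cup V|<+\infty$, and submodularity of $\Ps$ then gives $\Ps(U\cap V),\Ps(U\cup V)<+\infty$ as well. In particular $\sEnergyGeometric{(U\cap V;E,\Lambda)}$ and $\sEnergyGeometric{(U\cup V;E,\Lambda)}$ are finite.

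The key step is the pointwise identity
\begin{equation*}
|E\Delta(U\cap V)|+|E\Delta(U\cup V)|=|E\Delta U|+|E\Delta V|,
\end{equation*}
which follows by evaluating, at each $x\in\Renne$, the integrands $|\characteristic{E}(x)-\characteristic{U\cap V}(x)|+|\characteristic{E}(x)-\characteristic{U\cup V}(x)|$ and $|\characteristic{E}(x)-\characteristic{U}(x)|+|\characteristic{E}(x)-\characteristic{V}(x)|$: where $\characteristic{U}(x)=\characteristic{V}(x)$ both equal $2|\characteristic{E}(x)-\characteristic{U}(x)|$, and where $\characteristic{U}(x)\neq\characteristic{V}(x)$ both equal $1$. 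Multiplying this identity by $\Lambda$ and adding the submodularity inequality $\Ps(U\cap V)+\Ps(U\cup V)\leq\Ps(U)+\Ps(V)$ yields
\begin{equation*}
\sEnergyGeometric{(U\cap V;E,\Lambda)}+\sEnergyGeometric{(U\cup V;E,\Lambda)}\leq\sEnergyGeometric{(U;E,\Lambda)}+\sEnergyGeometric{(V;E,\Lambda)}.
\end{equation*}
Since $U$ and $V$ are minimizers, the right-hand side equals $2m$, where $m$ denotes the minimal value of $\sEnergyGeometric{(\cdot;E,\Lambda)}$ (finite by Claim 1); on the other hand each term on the left is $\geq m$. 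Hence both terms on the left equal $m$, i.e. $U\cap V\in\A$ and $U\cup V\in\A$, which closes the two-set case and hence the induction.

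I do not expect a real obstacle here; the only point needing a moment's care is the finiteness bookkeeping of the second paragraph, which guarantees that we are never subtracting infinities when passing from the displayed inequality to the conclusion.
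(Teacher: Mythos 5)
Your argument is correct, but it takes a genuinely different route from the paper. The paper proves this claim by passing to the functional problem: it uses the equivalence result (\Cref{cor:BINequivalenceFuncGeomPBs}) to view $\characteristic{U}$ and $\characteristic{V}$ as solutions of $\FunctionalProblem{\characteristic{E}}{\Lambda}$, invokes convexity of $\FunctionalProblemSolutions{\characteristic{E},\Lambda}$ to conclude that $\tfrac12(\characteristic{U}+\characteristic{V})$ is also a solution, and then reads off $U\cap V$ and $U\cup V$ as its superlevel sets. Your proof stays entirely at the geometric level: submodularity of $\Ps$ combined with the exact pointwise identity $|E\Delta(U\cap V)|+|E\Delta(U\cup V)|=|E\Delta U|+|E\Delta V|$ gives $\sEnergyGeometric{(U\cap V)}+\sEnergyGeometric{(U\cup V)}\leq 2m$, forcing both terms to equal $m$. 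Both arguments are sound; yours is more self-contained (it does not rely on the equivalence machinery of \Cref{cor:BINequivalenceFuncGeomPBs} or on \Cref{rem:ConvexityFunctionalProblemSolutions}), and in fact it anticipates exactly the two ingredients --- submodularity plus a symmetric-difference (in)equality --- that the paper deploys later in the proof of the Comparison Lemma (\eqref{eq:alpha 1} and \eqref{eq:alpha 2}). The paper's version buys brevity by reusing machinery already in place. Your finiteness bookkeeping is also correct and worth keeping: $|E\Delta U|<+\infty$ and $|E|<+\infty$ give $|U|<+\infty$, and submodularity then bounds $\Ps(U\cap V)+\Ps(U\cup V)$, so no indeterminate cancellations occur.
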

		Let $\Uone$ and $\Utwo$ be two solutions of \ref{Pb:GeometricProblem}. 
		By \Cref{cor:BINequivalenceFuncGeomPBs} part $\ref{item:BINfromGPtoP}$,  $\characteristic{\Uone}$ and $\characteristic{\Utwo}$ solve $\FunctionalProblem{\characteristic{E}}{\Lambda}$. By \Cref{rem:ConvexityFunctionalProblemSolutions}, $\unmezzo\left(\characteristic{\Uone}+\characteristic{\Utwo}\right)$ belongs to $\FunctionalProblemSolutions{\characteristic{E}, \Lambda}$. Applying \Cref{cor:BINequivalenceFuncGeomPBs} part $\ref{item:BINfromPtoGP}$ we deduce that for every $t\in[0,1)$ the set $\{\unmezzo\left(\characteristic{\Uone}+\characteristic{\Utwo}\right)>t\}$ belongs to $\A$, which involves $\Uone\cap\Utwo\in\A$ and $\Uone\cup\Utwo\in\A$.
	\begin{claim}{3}
		$\A$ is stable under countable decreasing intersections and under countable increasing unions.
	\end{claim}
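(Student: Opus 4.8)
The plan is to reduce the claim to monotone sequences and then apply the stability result \Cref{prop:stabilityGP} with a datum that does not vary with $k$. First I would observe that, thanks to Claim 2, it suffices to treat a decreasing sequence $\{\Uk\}_{k\in\N}\subseteq\A$ (for the decreasing-intersection statement) and an increasing sequence $\{\Uk\}_{k\in\N}\subseteq\A$ (for the increasing-union statement): given an arbitrary countable subfamily $\{V_j\}_{j\in\N}\subseteq\A$, the partial intersections $V_1\cap\cdots\cap V_k$ (resp. the partial unions $V_1\cup\cdots\cup V_k$) form, by Claim 2, a decreasing (resp. increasing) sequence in $\A$ with the same intersection (resp. union), so nothing is lost in assuming monotonicity.

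Next I would exploit the containment in $\BR$ already established at the start of this proof: since $\Ps(V\cap\BR)\le\Ps(V)$ for every $V\in\measurablesets$, while $|E\Delta(V\cap\BR)|<|E\Delta V|$ whenever $V\not\subseteq\BR$, every element of $\A$ is contained in $\BR$ up to a $\lebn$-negligible set. Hence, for a monotone sequence $\{\Uk\}_{k\in\N}\subseteq\A$, the characteristic functions $\characteristic{\Uk}$ are dominated by $\characteristic{\BR}\in\Lspace{1}(\Renne)$ and converge pointwise (monotonically) to $\characteristic{U}$, where $U:=\bigcap_{k\in\N}\Uk$ in the decreasing case and $U:=\bigcup_{k\in\N}\Uk$ in the increasing case; by dominated convergence, $\Uk\to U$ in $\Lspace{1}(\Renne)$, hence in $\Llocspace{1}(\Renne)$.

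Finally I would apply \Cref{prop:stabilityGP} with the constant sequence of data $\Fk:=E$ (so that $\Fk\to E$ in $\Lspace{1}(\Renne)$ holds trivially) and $\Uk\to U$ in $\Llocspace{1}(\Renne)$: this gives $U\in\GeometricProblemSolutions{E,\Lambda}=\A$, which is exactly both assertions of Claim 3.

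I do not anticipate any genuine obstacle here; the only point to be handled carefully is upgrading the automatic monotone pointwise convergence of the $\characteristic{\Uk}$ to convergence in $\Llocspace{1}(\Renne)$, so that \Cref{prop:stabilityGP} becomes applicable — and this is precisely what the uniform bound $\Uk\subseteq\BR$ (itself a byproduct of the reduction to \eqref{eq:EquivPB bound} via convexity of the ball) provides, through dominated convergence.
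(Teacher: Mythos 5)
Your proof is correct and follows essentially the same route as the paper, which simply invokes \Cref{prop:stabilityGP} after noting that the monotone limits are in fact $\Lspace{1}(\Renne)$-limits; you supply the justification (uniform containment in $\BR$ plus dominated convergence) that the paper leaves implicit. The initial reduction from arbitrary countable families to monotone sequences via Claim 2 is harmless but unnecessary, since the claim already assumes the intersections are decreasing and the unions increasing.
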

		This follows from \Cref{prop:stabilityGP}, since in both cases the limits are $\Lspace{1}(\Renne)$-limits.

	\begin{claim}{4}
		$\A$ admits a minimum and a maximum (w.r.t. set inclusion).
	\end{claim}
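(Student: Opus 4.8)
The plan is to construct the maximal solution by an exhaustion argument and the minimal one by the dual argument, using only Claims 1--3 above together with the a priori confinement established earlier, namely that every $\U\in\A$ satisfies $\U\subseteq\BR$, hence $|\U|\le\omegan R^n<+\infty$. All inclusions and equalities of sets are understood modulo $\lebn$-null sets, in accordance with \Cref{sec:notation}; in particular, the minimum and the maximum, once shown to exist, are automatically unique in this sense.

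For the maximal solution, I would set $m_+:=\sup\{|\U|:\U\in\A\}$, which is finite by Claim 1 and the bound $\U\subseteq\BR$. Choosing $\U_k\in\A$ with $|\U_k|\to m_+$ and putting $V_k:=\U_1\cup\dots\cup\U_k$, Claim 2 yields $V_k\in\A$; the sequence $(V_k)_k$ is increasing and $|\U_k|\le|V_k|\le m_+$, so $|V_k|\to m_+$. By Claim 3, $U_+:=\bigcup_k V_k\in\A$, and $|U_+|=\lim_k|V_k|=m_+$ by monotone convergence. To check maximality, fix an arbitrary $W\in\A$: Claim 2 gives $U_+\cup W\in\A$, hence $|U_+\cup W|\le m_+=|U_+|$, while $U_+\subseteq U_+\cup W$ forces $|U_+\cup W|\ge|U_+|$; as $|U_+|<+\infty$, equality of the measures gives $|W\setminus U_+|=0$, i.e. $W\subseteq U_+$. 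Thus $U_+$ is the maximum, and it is bounded since $U_+\subseteq\BR$.

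The minimal solution is obtained symmetrically, exchanging $\sup$ with $m_-:=\inf\{|\U|:\U\in\A\}$, finite unions with finite intersections, and increasing unions with decreasing intersections: pick $\U_k\in\A$ with $|\U_k|\to m_-$, set $V_k:=\U_1\cap\dots\cap\U_k\in\A$ (Claim 2), observe that $(V_k)_k$ decreases with $m_-\le|V_k|\le|\U_k|$, and put $U_-:=\bigcap_k V_k\in\A$ (Claim 3), which is again contained in $\BR$; then for any $W\in\A$ one has $U_-\cap W\in\A$ with $m_-\le|U_-\cap W|\le|U_-|=m_-$, and since $|U_-|<+\infty$ this forces $|U_-\setminus W|=0$, i.e. $U_-\subseteq W$. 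The one point requiring a little care here is the identity $|U_-|=\lim_k|V_k|=m_-$: continuity of $\lebn$ along the decreasing sequence $(V_k)_k$ is licit precisely because $|V_1|\le\omegan R^n<+\infty$, which is exactly where the confinement $\U\subseteq\BR$ gets used (for $U_+$, monotone convergence needs no finiteness). Apart from this, the whole argument is a direct combination of the stability properties recorded in Claims 1--3, in the spirit of \cite[Proposition 6.1]{CMP}.
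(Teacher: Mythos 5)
Your proof is correct and follows essentially the same approach as the paper's: take a measure-extremizing sequence in $\A$, stabilize it by finite intersections (resp.\ unions) via Claim 2, pass to the limit via Claim 3, and then use Claim 2 once more to compare the limit with an arbitrary element of $\A$. The only cosmetic differences are that you argue extremality directly rather than by contradiction and that you spell out the maximum case, which the paper leaves to the reader.
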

		As far as the existence of a minimum solution is concerned, the strategy is to show that there exists $\Emini\in\A$ minimizing the Lebesgue measure and then to prove by contradiction that it is contained in all sets of $\A$. The argument for $\Emaxi$ is left to the reader, being very similar to the previous one. We put		
		\begin{align*}
		m:=\inf\bigSet{|\U|}{\U\in\A}
		\end{align*}		
		and we let $\sequence{\Un}{n}\subseteq\A$ such that $|\Un|\to m\in[0,+\infty)$ as $n$ goes to $+\infty$. For every $k\in\N$, we put $\Vk:=\bigcap_{n=1}^{k}\Un$ and note that $\sequence{\Vk}{k}\subseteq\A$ is a decreasing sequence, converging in $\Lspace{1}(\Renne)$ to $\Emini:=\bigcap_{k=1}^{+\infty}\Vk\in\A$. In addition, we observe that $|\Emini|=m$.
		Now, we suppose by contradiction there exists $\Etilde\in\A$ such that $\Emini\nsubseteq \Etilde$. Then $\Emini\cap\Etilde\in\A$, so that:
		\begin{align*}
		|\Emini\cap\Etilde|=|\Emini|-|\Emini\setminus\Etilde|=m-|\Emini\setminus\Etilde|<m\leq|\Emini\cap\Etilde|,
		\end{align*}
		which is our contradiction.
	
	The proof is now concluded.
\end{proof}

The following statement is a direct consequence of \Cref{lem:maximum and minimum solutions} and \Cref{rem:Solutions with complement datum}.
 
\begin{cor}[maximal and minimal solutions 2]
	\label{cor:maximum and minimum solutions 2}
	Let $s\in(0,1)$, $\Lambda>0$ and $E\in\measurablesets$ be bounded. Then $\GeometricProblem{\comp{E}}{\Lambda}$ admits a minimal and a maximal solution.
	Furthermore, \eqref{eq:solutions with complement datum} holds.
	
\end{cor}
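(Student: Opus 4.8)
The plan is to transfer the existence of extremal (minimal and maximal) solutions from $\GeometricProblem{E}{\Lambda}$ to $\GeometricProblem{\comp{E}}{\Lambda}$ by passing to complements, exploiting that complementation reverses set inclusions and that, by \Cref{rem:Solutions with complement datum}, the two solution sets are in bijection via $\U\mapsto\comp{\U}$.

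First I would invoke \Cref{lem:maximum and minimum solutions}: since $E$ is bounded, $\GeometricProblem{E}{\Lambda}$ admits a minimal solution $\Emini$ and a maximal solution $\Emaxi$, both bounded, with $\Emini\subseteq\U\subseteq\Emaxi$ for every $\U\in\GeometricProblemSolutions{E,\Lambda}$. By \Cref{rem:Solutions with complement datum}, $\GeometricProblem{\comp{E}}{\Lambda}$ then also admits solutions, and its solution set is exactly $\Set{\comp{\U}}{\U\in\GeometricProblemSolutions{E,\Lambda}}$; in particular this is precisely the assertion \eqref{eq:solutions with complement datum}, and $\comp{\Emini}$, $\comp{\Emaxi}$ are both solutions of $\GeometricProblem{\comp{E}}{\Lambda}$.

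Next I would identify the extremal solutions of $\GeometricProblem{\comp{E}}{\Lambda}$. Take any $V\in\GeometricProblemSolutions{\comp{E},\Lambda}$. By the bijection just recalled, $V=\comp{\U}$ for some $\U\in\GeometricProblemSolutions{E,\Lambda}$, and applying complementation to $\Emini\subseteq\U\subseteq\Emaxi$ gives $\comp{\Emaxi}\subseteq V\subseteq\comp{\Emini}$. Hence $\comp{\Emaxi}$ is contained in every solution and $\comp{\Emini}$ contains every solution, so they are, respectively, the minimal and the maximal solution of $\GeometricProblem{\comp{E}}{\Lambda}$ (well defined up to $\lebn$-negligible sets, exactly as in the remark following \Cref{lem:maximum and minimum solutions}).

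There is essentially no obstacle here: the argument is a short two-line deduction. The only point worth being careful about is that boundedness is used for $E$ and not for $\comp{E}$ (which merely has bounded complement), which is precisely why the transfer must be routed through \Cref{rem:Solutions with complement datum} rather than applying \Cref{lem:maximum and minimum solutions} directly to $\comp{E}$; and one should recall that "minimal/maximal solution" is unambiguous because such a solution, when it exists, is unique up to $\lebn$-negligible sets.
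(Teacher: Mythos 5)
Your argument is correct and coincides with the paper's, which states the corollary as a direct consequence of \Cref{lem:maximum and minimum solutions} and \Cref{rem:Solutions with complement datum}: you simply make explicit the order-reversing bijection $\U\mapsto\comp{\U}$, identifying $\comp{(\Emaxi)}$ and $\comp{(\Emini)}$ as the minimal and maximal solutions of $\GeometricProblem{\comp{E}}{\Lambda}$, exactly as recorded afterwards in \Cref{defn:MaxMinSolutions}. No gaps.
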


\begin{defn}\label{defn:MaxMinSolutions}
	For fixed $s\in(0,1)$, $\Lambda>0$ and $E\in\measurablesets$ (either bounded or with bounded complement)
	we denote by $\Emini$ and $\Emaxi$ respectively the \textit{minimal} and the \textit{maximal} solution (w.r.t set inclusion) to \ref{Pb:GeometricProblem}, which exist thanks to \Cref{lem:maximum and minimum solutions} and \Cref{cor:maximum and minimum solutions 2}. In particular, \Cref{cor:maximum and minimum solutions 2} ensures that $\mini{(\comp{E})}=\comp{(\Emaxi)}$ and $\maxi{(\comp{E})}=\comp{(\Emini)}$.
\end{defn}

\begin{lemma}[Comparison]
	\label{lem:Comparison}
	Let $s\in(0,1)$, $\Lambda>0$ and $\Eone,\Etwo\in\measurablesets$ such that for all $i\in\{1,2\}$  $\Ps(\Ei)<\infty$ and either $\Ei$ or $\comp{\Ei}$ is bounded .
	
	If $\Etwo\subseteq\Eone$, then $\Etwomini\subseteq\Eonemini$ and $\Etwomaxi\subseteq\Eonemaxi$.
\end{lemma}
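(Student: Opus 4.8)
The plan is to run the classical lattice comparison argument, combining submodularity of $\Ps$ with an elementary pointwise inequality for symmetric differences that is adapted to the ordering $\Etwo\subseteq\Eone$. All the solutions appearing below exist by \Cref{lem:maximum and minimum solutions}, \Cref{cor:maximum and minimum solutions 2} and \Cref{defn:MaxMinSolutions}; being solutions, $\mini{\Ei}$ and $\maxi{\Ei}$ have finite energy, so in particular $\Ps(\mini{\Ei}),\Ps(\maxi{\Ei})<+\infty$ and $|\Ei\Delta\mini{\Ei}|,|\Ei\Delta\maxi{\Ei}|<+\infty$ for $i\in\onetwo$.

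First I would record the pointwise estimate: for $e_1,e_2,a,b\in\{0,1\}$ with $e_2\leq e_1$,
\begin{equation*}
|e_1-(a\vee b)|+|e_2-(a\wedge b)|\leq |e_1-a|+|e_2-b|,
\end{equation*}
which is checked by running through the four values of $(a,b)$ (equality holds except when $(a,b)=(0,1)$, where the left-hand side is $1-e_1+e_2$ and the right-hand side is $e_1+1-e_2$, and $e_2\leq e_1$ gives the inequality). Taking $e_i=\characteristic{\Ei}$, $a=\characteristic{A}$, $b=\characteristic{B}$ and integrating over $\Renne$ yields, for all $A,B\in\measurablesets$,
\begin{equation*}
|\Eone\Delta(A\cup B)|+|\Etwo\Delta(A\cap B)|\leq |\Eone\Delta A|+|\Etwo\Delta B|.
\end{equation*}

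Next I would set $A:=\Eonemini$ and $B:=\Etwomini$. Submodularity of $\Ps$ together with the displayed measure inequality gives
\begin{equation*}
\sEnergyGeometric{(A\cup B;\Eone,\Lambda)}+\sEnergyGeometric{(A\cap B;\Etwo,\Lambda)}\leq \sEnergyGeometric{(A;\Eone,\Lambda)}+\sEnergyGeometric{(B;\Etwo,\Lambda)};
\end{equation*}
in particular the left-hand side is finite, so $A\cup B$ and $A\cap B$ are admissible competitors for $\GeometricProblem{\Eone}{\Lambda}$ and $\GeometricProblem{\Etwo}{\Lambda}$ respectively. Since $A$ minimizes $\sEnergyGeometric{(\cdot;\Eone,\Lambda)}$ and $B$ minimizes $\sEnergyGeometric{(\cdot;\Etwo,\Lambda)}$, the reverse inequalities $\sEnergyGeometric{(A;\Eone,\Lambda)}\leq\sEnergyGeometric{(A\cup B;\Eone,\Lambda)}$ and $\sEnergyGeometric{(B;\Etwo,\Lambda)}\leq\sEnergyGeometric{(A\cap B;\Etwo,\Lambda)}$ also hold, so every inequality above is an equality; in particular $A\cap B=\Eonemini\cap\Etwomini\in\GeometricProblemSolutions{\Etwo,\Lambda}$. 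Since $\Etwomini$ is the \emph{minimal} solution of $\GeometricProblem{\Etwo}{\Lambda}$, this forces $\Etwomini\subseteq\Eonemini\cap\Etwomini\subseteq\Eonemini$.

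For the maximal solutions one can repeat the argument verbatim with $A:=\Eonemaxi$, $B:=\Etwomaxi$, now using that $A\cup B=\Eonemaxi\cup\Etwomaxi\in\GeometricProblemSolutions{\Eone,\Lambda}$ and that $\Eonemaxi$ is maximal, so $\Etwomaxi\subseteq\Eonemaxi\cup\Etwomaxi\subseteq\Eonemaxi$. Alternatively, $\Etwo\subseteq\Eone$ implies $\comp{\Eone}\subseteq\comp{\Etwo}$ (with the same hypotheses, since $\Ps(\comp{\Ei})=\Ps(\Ei)$ and boundedness is exchanged between a set and its complement), so the minimal-solution statement applied to the complements, together with $\mini{(\comp{\Ei})}=\comp{(\maxi{\Ei})}$ from \Cref{defn:MaxMinSolutions}, gives $\comp{(\Eonemaxi)}\subseteq\comp{(\Etwomaxi)}$, i.e. $\Etwomaxi\subseteq\Eonemaxi$. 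The only mildly delicate points are the four-case verification of the pointwise inequality and the bookkeeping guaranteeing that the competitor sets have finite energy; everything else is an immediate consequence of submodularity and minimality.
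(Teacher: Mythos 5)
Your proof is correct, and it takes a genuinely shorter route than the one in the paper. Both arguments start from the same two ingredients — submodularity of $\Ps$ and the elementary inequality $|\Eone\Delta(A\cup B)|+|\Etwo\Delta(A\cap B)|\leq|\Eone\Delta A|+|\Etwo\Delta B|$ for $\Etwo\subseteq\Eone$ — and both conclude that the combined two-problem inequality is saturated. The paper, however, works with \emph{arbitrary} solutions $\ULambdaone\in\GeometricProblemSolutions{\Eone,\Lambda}$, $\ULambdatwo\in\GeometricProblemSolutions{\Etwo,\Lambda}$: it extracts from the equality case of submodularity, via the interaction measure $\interaction$, the dichotomy $\ULambdaone\subseteq\ULambdatwo$ or $\ULambdatwo\subseteq\ULambdaone$, then shows that each $\ULambdai$ also solves the other problem, and only afterwards specializes to the extremal solutions. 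You instead plug in $A=\Eonemini$, $B=\Etwomini$ from the start and observe that saturation alone already forces $A\cap B\in\GeometricProblemSolutions{\Etwo,\Lambda}$, so minimality of $\Etwomini$ gives $\Etwomini\subseteq\Eonemini\cap\Etwomini$ directly; dually for the maximal solutions (or via complementation). This bypasses the analysis of the equality case of submodularity entirely, which is the most delicate part of the paper's proof. What the paper's longer route buys is the stronger intermediate information that \emph{any} pair of solutions of the two nested problems is totally ordered by inclusion and that each cross-solves the other problem — facts not needed for the stated lemma but of independent interest. Your bookkeeping on finiteness of the competitors' energies and the four-case check of the pointwise inequality are both accurate.
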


\begin{remark}
	Compare \Cref{lem:Comparison} with the question about the existence of a comparison principle that the authors ask themselves in \cite[pages 1826-1827]{ChaEse} and the answer given in \cite[Theorem 3.1]{YinGolOsh}.
\end{remark}

\begin{proof}[Proof of \Cref{lem:Comparison}]
	\newcommand{\sopraFunzione}{f}
	\newcommand{\sottoFunzione}{g}
	$\forallionetwo$ let $\ULambdai$ be a solution of $\GeometricProblem{\Ei}{\Lambda}$.
	
	Thanks to the minimality of $\ULambdaone$ and $\ULambdatwo$ we find
	\begin{equation}
	\label{eq:beta}
	\begin{split}
	&\Ps(\ULambdaone)+|\ULambdaone\Delta\Eone|+\Ps(\ULambdatwo)+|\ULambdatwo\Delta\Etwo|\\
	&\leq\Ps(\ULambdaone\cup\ULambdatwo)+|(\ULambdaone\cup\ULambdatwo)\Delta\Eone|\\
	&+\Ps(\ULambdaone\cap\ULambdatwo)+|(\ULambdaone\cap\ULambdatwo)\Delta\Etwo|.
	\end{split}
	\end{equation}
	
	By the submodularity property of the fractional perimeter we get
	\begin{align}
	\label{eq:alpha 1}
	\Ps(\ULambdaone)+\Ps(\ULambdatwo)\geq\Ps(\ULambdaone\cap\ULambdatwo)+\Ps(\ULambdaone\cup\ULambdatwo);
	\end{align}
	
	while an elementary computation leads to	
	\begin{align}
	\label{eq:alpha 2}
	|\ULambdaone\Delta\Eone|+|\ULambdatwo\Delta\Etwo|\geq|(\ULambdaone\cup\ULambdatwo)\Delta\Eone|+|(\ULambdaone\cap\ULambdatwo)\Delta\Etwo|.
	\end{align}
	 Precisely, to prove \eqref{eq:alpha 2} we define the following auxiliary functions
	\begin{align*}
		&\sopraFunzione:=\left|\characteristic{\ULambdaone}-\characteristic{\Eone}\right|+\left|\characteristic{\ULambdatwo}-\characteristic{\Etwo}\right|,\\
		&\sottoFunzione:=\left|\characteristic{\ULambdaone\cup\ULambdatwo}-\characteristic{\Eone}\right|+\left|\characteristic{\ULambdaone\cap\ULambdatwo}-\characteristic{\Etwo}\right|.
	\end{align*}
	Then, we observe that $\sopraFunzione$ and $\sottoFunzione$ coincide $\lebn$-a.e. in $\ULambdaone\setminus\ULambdatwo$, in $\ULambdaone\cap\ULambdatwo$ and in $\comp{\left(\ULambdaone\cup\ULambdatwo\right)}$. Furthermore, since the inclusion $\Etwo\subseteq\Eone$ holds, $\sopraFunzione\geq\sottoFunzione$ $\lebn$-a.e. in $\ULambdatwo\setminus\ULambdaone$, which allows us to deduce that $\sopraFunzione\geq\sottoFunzione$ $\lebn$-a.e. in $\Renne$. By integrating over $\Renne$ both sides of this pointwise inequality, we finally get \eqref{eq:alpha 2}. 
	Combining \eqref{eq:alpha 1} and \eqref{eq:alpha 2} we deduce that the inequality \eqref{eq:beta} is in fact an equality. This new information joint with \eqref{eq:alpha 1} and \eqref{eq:alpha 2} again leads to:
	
	\begin{align}
	\label{eq:alpha 1 is equality}
	\Ps(\ULambdaone)+\Ps(\ULambdatwo)=\Ps(\ULambdaone\cap\ULambdatwo)+\Ps(\ULambdaone\cup\ULambdatwo);
	\end{align}
	\begin{align}
	\label{eq:alpha 2 is equality}
	|\ULambdaone\Delta\Eone|+|\ULambdatwo\Delta\Etwo|=|(\ULambdaone\cup\ULambdatwo)\Delta\Eone|+|(\ULambdaone\cap\ULambdatwo)\Delta\Etwo|.
	\end{align}
	\newcommand{\A}{\ULambdaone}
	\newcommand{\B}{\ULambdatwo}
\newcommand{\interazione}[2]{\interaction\big(#1\times#2\big)}
	\eqref{eq:alpha 1 is equality} can be rewritten, according to \eqref{eq:interaction measure}, as follows:
	\begin{align*}
	&\interaction\big(\left((\A\setminus \B)\sqcup(\A\cap \B)\right)\times\left(\comp{(\A\cup \B)}\sqcup(\B\setminus \A)\right)\big)\\
	&+\interaction\big(\left((\B\setminus \A)\sqcup(\A\cap \B)\right)\times\left(\comp{(\A\cup \B)}\sqcup(\A\setminus \B)\right)\big)\\
	&=\interaction\big((\A\cap \B)\times\left((\A\setminus \B)\sqcup(\B\setminus \A)\sqcup\comp{(\A\cup \B)}\right)\big)\\
	&+\interaction\big(\left((\A\setminus \B)\sqcup(\A\cap \B)\sqcup(\B\setminus \A)\right)\times\comp{(\A\cup \B)}\big).
	\end{align*}
    By the additivity of the measure $\interaction$, the last equation becomes
	\begin{align*}
		&\interazione{(\A\setminus\B)}{\comp{(\A\cup\B)}}
		+\interazione{(\A\setminus\B)}{(\B\setminus\A)}\\
		&+\interazione{(\A\cap\B)}{\comp{(\A\cup\B)}}
		+\interazione{(\A\cap\B)}{(\B\setminus\A)}\\
		&+\interazione{(\B\setminus\A)}{\comp{(\A\cup\B)}}
		+\interazione{(\B\setminus\A)}{(\A\setminus\B)}\\
		&+\interazione{(\A\cap\B)}{\comp{(\A\cup\B)}}
		+\interazione{(\A\cap\B)}{(\A\setminus\B)}\\
		&=\interazione{(\A\cap\B)}{(\A\setminus\B)}
		+\interazione{(\A\cap\B)}{(\B\setminus\A)}\\
		&+\interazione{(\A\cap\B)}{\comp{(\A\cup\B)}}
		+\interazione{(\A\setminus\B)}{\comp{(\A\cup\B)}}\\
		&+\interazione{(\A\cap\B)}{\comp{(\A\cup\B)}}
		+\interazione{(\B\setminus\A)}{\comp{(\A\cup\B)}}.
	\end{align*}
	
	Then, we easily obtain $\interaction((\ULambdaone\setminus\ULambdatwo)\times (\ULambdatwo\setminus\ULambdaone))=0$, from which we conclude that
	\begin{equation}\label{eq:disjunction}
	\ULambdaone\subseteq\ULambdatwo \text{ or } \ULambdatwo\subseteq\ULambdaone.
	\end{equation}
	
	Let us assume $\ULambdaone\subseteq\ULambdatwo$.
	
	Then, for every $i\in\onetwo$ and $j\in\onetwo\setminus \{i\}$ it can be shown that $\ULambdai\in\GeometricProblemSolutions{\Ej, \Lambda}$. Precisely, by minimality of $\ULambdai$ for $\GeometricProblem{\Ei}{\Lambda}$, we get
	\begin{align}
	\label{eq:key}
	\Ps(\ULambdai)+\Lambda|\ULambdai\Delta\Ei|\leq\Ps(\ULambdaj)+\Lambda|\ULambdaj\Delta\Ei|.
	\end{align}
	Now we rewrite \eqref{eq:alpha 2 is equality} using the information $\ULambdaone\subseteq\ULambdatwo$:
	\begin{align}
	\label{eq:alpha 2 is equality revisited}
	|\ULambdai\Delta\Ei|+|\ULambdaj\Delta\Ej|=|\ULambdaj\Delta\Ei|+|\ULambdai\Delta\Ej|.
	\end{align}    
	Combining $\eqref{eq:key}$ and $\eqref{eq:alpha 2 is equality revisited}$ we deduce $\ULambdai$ solves $\GeometricProblem{\Ej}{\Lambda}$ for every $i,j\in\onetwo$ s.t. $i\neq j$.
		
	Then, we showed:
	\begin{equation}\label{eq:implication}
	\ULambdaone\subseteq\ULambdatwo\Rightarrow \begin{dcases*}
	\ULambdatwo\subseteq\Eonemaxi\\
	\Etwomini\subseteq\ULambdaone
	\end{dcases*}.
	\end{equation}
		
	This allows us to conclude. Precisely, we take first $\ULambdaone=\Eonemini$ and $\ULambdatwo=\Etwomini$. Then, by \eqref{eq:disjunction}, $\Etwomini\subseteq\Eonemini$ or $\Eonemini\subseteq\Etwomini$. In the former case we are done, while the latter still implies $\Etwomini\subseteq\Eonemini$, thanks to \eqref{eq:implication}. Finally, we take $\ULambdaone=\Eonemaxi$ and $\ULambdatwo=\Etwomaxi$ and we conclude by a similar argument.
\end{proof}

\begin{lemma}[Uniqueness for $\GeometricProblemSolutions{\Brx, \Lambda}$ with $\Lambda$ large]
	\label{lem:Uniqueness for Balls}
	Let $s\in(0,1)$. For all $x\in\Renne, r>0$ and $\Lambda>\frac{\cn}{\omegan}\frac{1}{r^s}$
	the problem $\GeometricProblem{\BallRadiusCenter{r}{x}}{\Lambda}$ admits $\BallRadiusCenter{r}{x}$ as unique solution.
\end{lemma}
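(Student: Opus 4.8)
The plan is to reduce, by translation and scaling, to the unit ball $\BallRadius{1}$, and then to establish the sharp estimate $\Ps(V)\ge\frac{\cn}{\omegan}|V|$ for every measurable $V\subseteq\BallRadius{1}$ — morally, the fact that $\BallRadius{1}$ realises its own $s$-Cheeger constant $\Cheegs(\BallRadius{1})=\frac{\cn}{\omegan}$. \emph{Reductions.} Since $\Ps$ is translation invariant and $|\BallRadiusCenter{r}{x}\Delta U|=|\BallRadiusCenter{r}{0}\Delta(U-x)|$, I may take $x=0$. Since $\Ps(\lambda E)=\lambda^{n-s}\Ps(E)$ and $|\lambda E|=\lambda^{n}|E|$ (Scaling), sending a competitor $U$ to $r^{-1}U$ turns $\GeometricProblem{\BallRadiusCenter{r}{0}}{\Lambda}$ into $\GeometricProblem{\BallRadius{1}}{\Lambda r^{s}}$, while the hypothesis $\Lambda>\frac{\cn}{\omegan}\frac{1}{r^{s}}$ becomes $\Lambda r^{s}>\frac{\cn}{\omegan}$. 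Hence it suffices to prove: if $\Lambda>\frac{\cn}{\omegan}$, then $\BallRadius{1}$ is the unique solution of $\GeometricProblem{\BallRadius{1}}{\Lambda}$.

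\emph{The key inequality.} I claim that $\Ps(V)\ge\frac{\cn}{\omegan}|V|$ for every measurable $V\subseteq\BallRadius{1}$. If $\Ps(V)=+\infty$ there is nothing to prove; otherwise $|V|\le\omegan<+\infty$, so I may write $|V|=\omegan\rho^{n}=|\BallRadius{\rho}|$ with $\rho:=(|V|/\omegan)^{1/n}\le 1$. By the fractional isoperimetric inequality (the ball minimises the $s$-perimeter among sets of prescribed finite volume, see \cite{FigFusMagMilMor}) together with Scaling, $\Ps(V)\ge\Ps(\BallRadius{\rho})=\rho^{\,n-s}\cn$, and therefore $\frac{\Ps(V)}{|V|}\ge\frac{\cn\rho^{\,n-s}}{\omegan\rho^{\,n}}=\frac{\cn}{\omegan}\rho^{-s}\ge\frac{\cn}{\omegan}$, as claimed.

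\emph{Conclusion.} Let $U\in\measurablesets$ with $U\neq\BallRadius{1}$; I may assume $\sEnergyGeometric{(U;\BallRadius{1},\Lambda)}<+\infty$ (otherwise $U$ is not optimal, since $\sEnergyGeometric{(\BallRadius{1};\BallRadius{1},\Lambda)}=\Ps(\BallRadius{1})=\cn<+\infty$), which forces $|U|<+\infty$. Using the convexity inequality $\Ps(U\cap\BallRadius{1})\le\Ps(U)$ (valid because $|U|<+\infty$ and $\BallRadius{1}$ is convex), the key inequality applied to $V=U\cap\BallRadius{1}$, and the identities $|U\cap\BallRadius{1}|=\omegan-|\BallRadius{1}\setminus U|$ and $|\BallRadius{1}\Delta U|=|\BallRadius{1}\setminus U|+|U\setminus\BallRadius{1}|$, one computes
\begin{align*}
\Ps(U)+\Lambda|\BallRadius{1}\Delta U|
&\ \ge\ \Ps(U\cap\BallRadius{1})+\Lambda|\BallRadius{1}\Delta U|\\
&\ \ge\ \frac{\cn}{\omegan}\,|U\cap\BallRadius{1}|+\Lambda|\BallRadius{1}\Delta U|\\
&\ =\ \cn+\Big(\Lambda-\frac{\cn}{\omegan}\Big)|\BallRadius{1}\setminus U|+\Lambda\,|U\setminus\BallRadius{1}|\\
&\ >\ \cn\ =\ \sEnergyGeometric{(\BallRadius{1};\BallRadius{1},\Lambda)},
\end{align*}
where I used $\frac{\cn}{\omegan}\,\omegan=\cn=\Ps(\BallRadius{1})$, and the strict inequality holds because $\Lambda>\frac{\cn}{\omegan}$ and $U\neq\BallRadius{1}$ forces $|\BallRadius{1}\setminus U|+|U\setminus\BallRadius{1}|>0$. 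Thus $\sEnergyGeometric{(U;\BallRadius{1},\Lambda)}>\sEnergyGeometric{(\BallRadius{1};\BallRadius{1},\Lambda)}$ for every $U\neq\BallRadius{1}$, i.e. $\BallRadius{1}$ is the unique minimiser.

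\emph{Main obstacle.} The only genuinely non-elementary ingredient is the fractional isoperimetric inequality used in the second step: it is precisely what produces the sharp constant $\frac{\cn}{\omegan}$. A softer argument — bounding $z\mapsto\int_{\comp{\BallRadius{1}}}|z-y|^{-n-s}\integralde y$ from below on $\BallRadius{1}$, which attains its minimum $\frac{n\omegan}{s}$ at the centre — only gives $\Ps(V)\ge\frac{n\omegan}{s}|V|$, and since $\frac{n\omegan}{s}<\frac{\cn}{\omegan}$ already for $n=1$ and $s=\frac12$, this weaker constant does not suffice when $s$ is close to $1$. Everything else is routine bookkeeping with volumes and with the two perimeter inequalities recorded in \Cref{sec:notation}.
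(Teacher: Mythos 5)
Your proof is correct, and it rests on the same key external ingredient as the paper's — the fractional isoperimetric inequality combined with the scaling $\Ps(\lambda E)=\lambda^{n-s}\Ps(E)$ — but it organizes the argument differently. The paper first shows (as you do) that competitors may be assumed to lie inside the ball, then invokes the \emph{rigidity} of the isoperimetric inequality (equality only for balls) together with the elementary bound $|\Brx\Delta U|\geq|\BUy\Delta\Brx|$ to conclude that every minimizer must itself be a ball contained in $\Brx$; the problem then collapses to minimizing the one-variable function $\rho\mapsto\cn\rho^{n-s}+\Lambda\omegan(r^n-\rho^n)$ on $[0,r]$. You instead extract from the isoperimetric inequality the calibrability-type bound $\Ps(V)\geq\frac{\cn}{\omegan}|V|$ for all $V\subseteq\BallRadius{1}$ (i.e.\ $\Cheegs(\BallRadius{1})=\frac{\cn}{\omegan}$, consistent with the paper's citation of \cite[Remark 5.2]{BraLinPar}) and then compare energies directly, obtaining in one stroke the quantitative estimate
\begin{equation*}
\sEnergyGeometric{(U;\BallRadius{1},\Lambda)}\;\geq\;\cn+\Big(\Lambda-\tfrac{\cn}{\omegan}\Big)|\BallRadius{1}\setminus U|+\Lambda|U\setminus\BallRadius{1}|,
\end{equation*}
which yields both minimality and uniqueness without ever needing the equality case of the isoperimetric inequality or the classification of minimizers as balls. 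What your route buys is economy and a cleaner uniqueness statement; it is also essentially the argument the paper uses later for \Cref{thm:convexsCalibrable} (uniqueness for $s$-calibrable convex sets when $\Lambda>\Cheegs(E)$), specialized to the ball. What the paper's route buys is slightly more structural information along the way (all competitors with lower energy than non-balls are balls), though that information is not needed for the final statement. Your reduction to the unit ball and the bookkeeping with $|U\cap\BallRadius{1}|=\omegan-|\BallRadius{1}\setminus U|$ are both correct.
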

\begin{proof}
	Let $r>0$, $x\in\Renne$ and $\Lambda>\frac{\cn}{\omegan}\frac{1}{r^s}$.
	Note that the argument used to obtain \eqref{eq:EquivPB bound} in the proof of \Cref{lem:maximum and minimum solutions}, tells us that every solution of $\GeometricProblem{\BallRadiusCenter{r}{x}}{\Lambda}$ is contained in $\BallRadiusCenter{r}{x}$. 
	
	Now, we consider $\U\in\measurablesets$ such that $\U\subseteq\BallRadiusCenter{r}{x}$. Then,
	\begin{align}
	\label{eq:fractional isoperimetric}
	\Ps(\U)\geq\Ps(\BU),
	\end{align}
	and the inequality in \eqref{eq:fractional isoperimetric} is an equality if and only if $\U$ is a ball. For a proof of \eqref{eq:fractional isoperimetric} see for instance \cite[Theorem A.1, case $p=1$]{FraSei} or \cite[Proposition 3.1]{CesNov}.
	
	Furthermore, for every $\BUy$ contained in $\Brx$:	
	\begin{align}\label{eq:ball and symmetric difference}
	|\Brx\Delta \U|=|\Brx|-|\U|=|\Brx|-|\BUy|=|\BUy\Delta\Brx|.
	\end{align}
	
	Combining \eqref{eq:fractional isoperimetric} and \eqref{eq:ball and symmetric difference}, we obtain that every solution of $\GeometricProblem{\BallRadiusCenter{r}{x}}{\Lambda}$ must be a ball contained in $\BallRadiusCenter{r}{x}$. That is,
	\begin{align}
	\label{eq:key2 eq for ball pb}
	\inf_{\substack{B\subset\Brx\\ B \text{ ball}}}\PsiLambda{B} \text{\hspace*{0.1 cm} is equivalent to }\GeometricProblem{\Brx}{\Lambda}.
	\end{align}
	
	At this point, thanks to the scaling property of $\Ps$, for every $\rho>0$ we obtain
	\begin{align}
	\label{eq:problem one parameter}
	\PsiLambda{\Brhoy}=\cn\rho^{n-s}+\Lambda\omegan(r^n-\rho^n),
	\end{align}
	whenever $\Brhoy\subset\Brx$.
	Thanks to our choice of $\Lambda$, it can easily be seen that the function in \eqref{eq:problem one parameter} admits a unique global minimum, attained when $\rho$ is equal to $r$.
	From \eqref{eq:key2 eq for ball pb} we deduce that $\Brx$ is the unique solution to $\GeometricProblem{\Brx}{\Lambda}$.
\end{proof}

\begin{cor}
	\label{cor: Uniqueness for Balls (uniform version)}
	Let $s\in(0,1)$ and $\rzero>0$.
	For all $x\in\Renne, r\geq\rzero$ and $\Lambda>\frac{\cn}{\omegan}\frac{1}{{\rzero}^s}$
	the problem $\GeometricProblem{\BallRadiusCenter{r}{x}}{\Lambda}$ admits $\BallRadiusCenter{r}{x}$ as unique solution.
\end{cor}
\begin{proof}
	It is a direct consequence of \Cref{lem:Uniqueness for Balls}.	
\end{proof}

\begin{cor}
	\label{cor: Uniqueness for CO-Balls (uniform version)}
	Let $s\in(0,1)$ and $\rzero>0$.
	For all $x\in\Renne, r\geq\rzero$ and $\Lambda>\frac{\cn}{\omegan}\frac{1}{{\rzero}^s}$
	the problem $\GeometricProblem{\comp{\BallRadiusCenter{r}{x}}}{\Lambda}$ admits $\comp{\BallRadiusCenter{r}{x}}$ as unique solution.
\end{cor}
\begin{proof}
	It follows from \Cref{cor: Uniqueness for Balls (uniform version)} and \Cref{cor:maximum and minimum solutions 2}.
\end{proof}
Now we are ready to prove \Cref{thm:lemmetto per lavoro su variazione seconda}.

\begin{proof}[Proof of \Cref{thm:lemmetto per lavoro su variazione seconda}]
	\newcommand{\Imeno}{I^{-}}
	\newcommand{\Ipiu}{I^{+}}
	\newcommand{\xmeno}{x^{-}}
	\newcommand{\xpiu}{x^{+}}
	\newcommand{\Brxmeno}{\BallRadiusCenter{r(x^{-})}{x^{-}}}
	\newcommand{\Brxpiu}{\BallRadiusCenter{r(x^{+})}{x^{+}}}
	Thanks to the boundary regularity of $E$, we can find a positive constant $\rzero>0$, two countable sets of indexes $\Imeno\subset E$ and $\Ipiu\subset\comp{E}$ and a positive function $r$ defined on $\Ipiu\cup\Imeno$ such that $r\geq\rzero\on\Ipiu\cup\Imeno$ and:
	\begin{align*}
	&\Brxmeno\subseteq E\spazio \forall\xmeno\in\Imeno, &\union[\xmeno\in\Imeno]\Brxmeno=E,\\
	&E\subseteq\comp{\Brxpiu}	\spazio \forall\xpiu\in\Ipiu, &\intersection[\xpiu\in\Ipiu]\comp{\Brxpiu}=\closure{E}.
	\end{align*}
	
	Let $\Lambda>\frac{\cn}{\omegan}\frac{1}{{\rzero}^s}$, $\xmeno\in\Imeno$ and $\xpiu\in\Ipiu$.
	Let $\Emini$ and $\Emaxi$ be the minimum and the maximum solution to $\GeometricProblem{\Lambda}{E}$ (see \Cref{defn:MaxMinSolutions}).
	
	Since $\Brxmeno\subseteq E$, by \Cref{lem:Comparison} and \Cref{cor: Uniqueness for Balls (uniform version)} we get:
	\begin{align}
	\label{eq:interno}
	\Brxmeno\subseteq\Emini.
	\end{align}
	Furthermore, $E\subseteq\comp{\Brxpiu}$ together with \Cref{lem:Comparison} and \Cref{cor: Uniqueness for CO-Balls (uniform version)} implies:	
	\begin{align}
	\label{eq:esterno}
	\Emaxi\subseteq\comp{\Brxpiu}.
	\end{align}
	Taking the union as $\xmeno\in\Imeno$ in \eqref{eq:interno} and the intersection as $\xpiu\in\Ipiu$ in \eqref{eq:esterno} we obtain:
	\begin{align*}
	E\subseteq \Emini\subseteq\Emaxi\subseteq \closure{E}.
	\end{align*}
\end{proof}

The next result characterizes the solutions of \ref{Pb:FunctionalProblem} for high values of the fidelity parameter under regularity assumptions on the datum. It is the analogue of \cite[Theorem 5.6]{ChaEse} in the nonlocal setting.

\begin{thm}[High fidelity]\label{thm:highFidelity}
	Let $s\in(0,1)$ and $f\in\Lspace{1}(\Renne)$ with superlevel sets uniformly bounded in $C^{1,1}$ (i.e., there exists $r>0$ such that every superlevel set of $f$ and its complement are the union of balls of radius $r$).
	Then, there exists $\Lambda(f,s)>0$ s.t.	
	\begin{equation*}
		\FunctionalProblemSolutions{f,\Lambda}=\{f\},
	\end{equation*}
	for every $\Lambda\geq\Lambda(f,s)$.
\end{thm}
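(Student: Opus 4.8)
The plan is to reduce the statement to the geometric problems and then apply \Cref{thm:lemmetto per lavoro su variazione seconda} level set by level set, with a threshold that is uniform in the level. By \Cref{prop:existenceL1Datum} the set $\FunctionalProblemSolutions{f,\Lambda}$ is nonempty and contained in $\Wspace{s}{1}(\Renne)$, so it is enough to prove that every $u\in\FunctionalProblemSolutions{f,\Lambda}$ equals $f$ $\lebn$-a.e. For such a $u$, I would invoke \Cref{prop:equivalenceFuncGeomPBs} part $\ref{item:fromPtoGP}$, applied to the pair $(\sequence{\{u>t\}}{t\in\R},\sequence{\{f>t\}}{t\in\R})$, obtaining $\{u>t\}\in\GeometricProblemSolutions{\{f>t\},\Lambda}$ for every $t\in\R\setminus\{0\}$. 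If I can show that $\{f>t\}$ is the \emph{unique} solution of $\GeometricProblem{\{f>t\}}{\Lambda}$ for all such $t$, then $\{u>t\}=\{f>t\}$ for every $t\neq 0$, and the layer-cake reconstruction of an $L^1$ function from its superlevel sets (which is insensitive to the single threshold $t=0$) forces $u=f$ $\lebn$-a.e., hence $\FunctionalProblemSolutions{f,\Lambda}=\{f\}$.

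The core of the argument is to produce, via \Cref{thm:lemmetto per lavoro su variazione seconda}, a single threshold $\Lambda(f,s)$ that works for \emph{all} superlevel sets at once. Here I would use that the hypothesis provides one radius $r>0$ such that, simultaneously for every $t$, both $\{f>t\}$ and its complement are unions of balls of radius $r$; by \Cref{rem:lemmetto} the threshold attached to $E=\{f>t\}$ may then be taken to be $\frac{\cn}{\omegan}\frac{1}{r^s}$, independently of $t$, so I set $\Lambda(f,s):=\frac{\cn}{\omegan}\frac{1}{r^s}$. To legitimately apply \Cref{thm:lemmetto per lavoro su variazione seconda} to $\{f>t\}$ I also need this set to be bounded or to have bounded complement: this follows from $f\in\Lspace{1}(\Renne)$ (which gives $|\{f>t\}|<\infty$ for $t>0$ and $|\{f\leq t\}|<\infty$ for $t<0$) together with the elementary fact that a union of balls of a fixed radius with finite Lebesgue measure is bounded, while the $C^{1,1}$ regularity of $\partial\{f>t\}$ is simply a restatement of the uniform two-sided ball condition. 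Thus for $\Lambda\geq\Lambda(f,s)$, \Cref{thm:lemmetto per lavoro su variazione seconda} yields $\GeometricProblemSolutions{\{f>t\},\Lambda}=\{\{f>t\}\}$ for every $t\neq 0$, which closes the loop opened in the previous paragraph.

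I do not expect a genuine obstacle, since \Cref{thm:lemmetto per lavoro su variazione seconda} already carries all the analytic content; the proof here is an assembly of results already available in the excerpt. The point deserving the most care is the extraction of a level-independent threshold $\Lambda(f,s)$, which is precisely where the uniform radius $r$ in the hypothesis is used — without it one would only get a $t$-dependent bound. A minor point is the treatment of the level $t=0$, which \Cref{prop:equivalenceFuncGeomPBs} part $\ref{item:fromPtoGP}$ does not cover, but which is $\lebone$-negligible and hence invisible to the layer-cake formula used in the proof of \eqref{eq:LinkFuncGeom}; and similarly one must record the (easy) verification that each relevant superlevel set is bounded or has bounded complement, so that \Cref{thm:lemmetto per lavoro su variazione seconda} is applicable.
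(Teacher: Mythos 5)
Your proposal is correct and follows essentially the same route as the paper: reduce to the geometric problems via \Cref{prop:equivalenceFuncGeomPBs}~\ref{item:fromPtoGP}, then use \Cref{thm:lemmetto per lavoro su variazione seconda} together with \Cref{rem:lemmetto} and the uniform radius $r$ to get the level-independent threshold $\Lambda(f,s)=\frac{\cn}{\omegan}\frac{1}{r^s}$ and uniqueness $\GeometricProblemSolutions{\{f>t\},\Lambda}=\{\{f>t\}\}$ for all levels, whence $u=f$ a.e. The extra checks you record (boundedness of the superlevel sets or of their complements, and the harmlessness of the single level $t=0$) are details the paper leaves implicit, and your verifications of them are sound.
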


\begin{remark}
	We do not adapt the proof of \cite[Theorem 5.6]{ChaEse}, which is done via calibrations, but we prove \Cref{thm:highFidelity} applying \Cref{thm:lemmetto per lavoro su variazione seconda}.
\end{remark}
\begin{proof}
	We already know $\FunctionalProblemSolutions{f,\Lambda}\neq\emptyset$ for every $\Lambda>0$.
	
	By \Cref{thm:lemmetto per lavoro su variazione seconda}, \Cref{rem:lemmetto} and the uniform boundedness in $C^{1,1}$ of the superlevel sets of the datum $f$, there exists $\Lambda(f,s)>0$ such that $\GeometricProblemSolutions{\{f>t\},\Lambda}=\{\{f>t\}\}$ for every $\Lambda\geq\Lambda(f,s)$ and $t\in\R$.
	
	By \Cref{prop:equivalenceFuncGeomPBs} $\ref{item:fromPtoGP}$, we deduce that if $u\in\FunctionalProblemSolutions{f,\Lambda}$, then $\{u>t\}=\{f>t\}$ for every $t\in\R\setminus\{0\}$. Then, $u=f$ almost everywhere.
	
\end{proof}

\begin{thm}[Low fidelity]\label{thm:lowFidelity}
	Let $s\in(0,1)$ and $R>0$ and $f\in\Lspace{1}(\Renne)$ such that $\supp(f)\subset\BR$. Then, there exists $\Lambda(R,n,s)>0$ such that \begin{align*}
	\FunctionalProblemSolutions{f,\Lambda}=\{0\}
	\end{align*}
	for any $0<\Lambda<\Lambda(R,n,s)$.
\end{thm}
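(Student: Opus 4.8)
The plan is to reduce the statement to a purely geometric one and to take $\Lambda(R,n,s):=\frac{\cn}{\omegan R^s}$. The core will be the following claim: \emph{if $E\subseteq\BR$ and $0<\Lambda<\frac{\cn}{\omegan R^s}$, then $\GeometricProblemSolutions{E,\Lambda}=\{\emptyset\}$.} Granting this, the conclusion follows quickly. Since $\supp f\subseteq\BR$ forces $\{f\geq t\}\subseteq\BR$ for every $t>0$ and $\{f<t\}\subseteq\BR$ for every $t<0$, and since $\FunctionalProblemSolutions{f,\Lambda}\neq\emptyset$ by \Cref{prop:existenceL1Datum}, any $u\in\FunctionalProblemSolutions{f,\Lambda}$ satisfies — by part \ref{item:fromPtoGP} of \Cref{prop:equivalenceFuncGeomPBs} applied to the pairs $(\{u\geq t\},\{f\geq t\})$ and $(\{u<t\},\{f<t\})$ — that $\{u\geq t\}\in\GeometricProblemSolutions{\{f\geq t\},\Lambda}$ for all $t>0$ and $\{u<t\}\in\GeometricProblemSolutions{\{f<t\},\Lambda}$ for all $t<0$. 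By the claim all these sets are empty, so letting $t\to0^+$ (resp. $t\to0^-$) gives $u\leq0$ (resp. $u\geq0$) $\lebn$-a.e., i.e. $u=0$; hence $\FunctionalProblemSolutions{f,\Lambda}=\{0\}$.

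To prove the claim I would argue as follows. First reduce to competitors inside $\BR$: since $E\subseteq\BR$ and $\BR$ is convex, for any $U\in\measurablesets$ with $|U|<+\infty$ one has $\Ps(U\cap\BR)\leq\Ps(U)$ and $|E\Delta(U\cap\BR)|\leq|E\Delta U|$, the latter being strict when $|U\setminus\BR|>0$ — exactly the computation leading to \eqref{eq:EquivPB bound}. Hence it is enough to bound $\sEnergyGeometric{(U;E,\Lambda)}$ from below over $U\subseteq\BR$. For such $U$, the elementary identity $|E\Delta U|=|E|-|U|+2|U\setminus E|$ gives $\sEnergyGeometric{(U;E,\Lambda)}-\sEnergyGeometric{(\emptyset;E,\Lambda)}=\Ps(U)-\Lambda|U|+2\Lambda|U\setminus E|\geq\Ps(U)-\Lambda|U|$. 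The fractional isoperimetric inequality \eqref{eq:fractional isoperimetric} together with the scaling of $\Ps$ and the bound $|U|\leq|\BR|=\omegan R^n$ yield $\Ps(U)\geq\Ps(\BU)=\cn\left(\frac{|U|}{\omegan}\right)^{\frac{n-s}{n}}\geq\frac{\cn}{\omegan R^s}|U|$, so that $\sEnergyGeometric{(U;E,\Lambda)}-\sEnergyGeometric{(\emptyset;E,\Lambda)}\geq\left(\frac{\cn}{\omegan R^s}-\Lambda\right)|U|>0$ whenever $|U|>0$. This proves the claim and also shows why strictness of $\Lambda<\Lambda(R,n,s)$ is needed: for $E=\BR$ and $\Lambda=\frac{\cn}{\omegan R^s}$ the set $\BR$ is a minimizer as well, as one checks directly.

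The only genuinely nontrivial ingredient is the \emph{uniform} lower bound $\Ps(U)\geq\frac{\cn}{\omegan R^s}|U|$ valid for all $U\subseteq\BR$: it is here that the fractional isoperimetric inequality and the a priori volume bound $|U|\leq\omegan R^n$ (coming from the reduction $U\subseteq\BR$) are essential, and it is this estimate that pins down the admissible range of $\Lambda$. A second point needing a little care is that, in order to control $u$ on both sides of $0$, one must use two different families of level sets — $\{u\geq t\}$ for $t>0$ and $\{u<t\}$ for $t<0$ — rather than, say, only superlevel sets.
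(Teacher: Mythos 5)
Your argument is correct, but it follows a genuinely different route from the paper's. The paper works directly on the functional: it invokes the fractional Sobolev inequality $\norm{u}_{\Lspace{\frac{n}{n-s}}(\Renne)}\leq C(n,s)\sTotalVar{u}$, compares $u$ with the competitor $0$, and uses \Holder's inequality on $\BR$ to absorb the fidelity term, arriving at the threshold $\Lambda(R,n,s)=\left(C(n,s)R^s\omegan^{s/n}\right)^{-1}$ in a few lines and without any level-set decomposition. You instead pass through \Cref{prop:equivalenceFuncGeomPBs}~\ref{item:fromPtoGP} and prove the geometric statement that $\emptyset$ is the unique minimizer of $\GeometricProblem{E}{\Lambda}$ whenever $E\subseteq\BR$ and $\Lambda<\cn/(\omegan R^s)$, via the reduction to competitors inside $\BR$ (as in \eqref{eq:EquivPB bound}) together with the fractional isoperimetric inequality and the scaling of $\Ps$; all the steps check out, including the identity $|E\Delta U|=|E|-|U|+2|U\setminus E|$ and the choice of the two families $\{u\geq t\}$ for $t>0$ and $\{u<t\}$ for $t<0$, which is indeed what is needed to pin $u$ down on both sides of $0$. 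The two arguments are of course cousins --- the isoperimetric inequality is the geometric form of the sharp $\Wspace{s}{1}$ Sobolev inequality --- but yours yields the explicit constant $\cn/(\omegan R^s)$ (sharp for the geometric claim, as your example $E=\BR$ shows, and matching the threshold of \Cref{lem:Uniqueness for Balls}) and stays within the geometric framework used elsewhere in the paper, at the price of relying on the equivalence machinery; the paper's route is shorter and self-contained at the level of the functional, but its constant involves the non-explicit embedding constant $C(n,s)$.
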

\begin{proof}
	We recall the following fractional Sobolev inequality \cite[see][Theorem 6.5]{DiNPalVal}: there exists a constant $C(n,s)>0$ such that
	\begin{align}\label{eq:Ws1embeddinginLpstar}
	\norm{u}_{\Lspace{\frac{n}{n-s}}(\Renne)}\leq C(n,s)\sTotalVar{u}
	\end{align}
	for all $u\in\Wspace{s}{1}(\Renne)$.
	
	Let $0<\Lambda<\Lambda(R,n,s):=\left(C(n,s)R^s\omegan^{\frac{s}{n}}\right)^{-1}$  and $u\in\FunctionalProblemSolutions{f,\Lambda}$. So, $\sEnergyLOneFracTV{(u;f,\Lambda)}\leq\sEnergyLOneFracTV{(0;f,\Lambda)}$ and an application of \eqref{eq:Ws1embeddinginLpstar} entails:
	\begin{align*}
	\frac{1}{C(n,s)}\norm{u}_{\Lspace{\frac{n}{n-s}}(\Renne)}+\Lambda\norm{u-f}_{\Lspace{1}(\Renne)}\leq\Lambda\norm{f}_{\Lspace{1}(\Renne)}=\Lambda\norm{f}_{\Lspace{1}(\BR)}.
	\end{align*}
	Then, by \Holder's inequality we get $\norm{u}_{\Lspace{1}(\BR)}\leq\norm{u}_{\Lspace{\frac{n}{n-s}}(\BR)}\norm{1}_{\Lspace{\frac{n}{s}}(\BR)}$. Therefore:
	\begin{align*}
	\Lambda(R,n,s)\norm{u}_{\Lspace{1}(\Br)}+\frac{1}{C(n,s)}\norm{u}_{\Lspace{\frac{n}{n-s}}(\comp{\BR})}+\Lambda\norm{u-f}_{\Lspace{1}(\BR)}\leq\Lambda\norm{f}_{\Lspace{1}(\BR)},
	\end{align*}
	which leads to
	\begin{align*}
	&\left(\Lambda(R,n,s)-\Lambda\right)\norm{u}_{\Lspace{1}(\BR)}+\frac{1}{C(n,s)}\norm{u}_{\Lspace{\frac{n}{n-s}}(\comp{\BR})}\\
	&\leq\Lambda(\norm{f}_{\Lspace{1}(\BR)}-\norm{u}_{\Lspace{1}(\BR)}-\norm{u-f}_{\Lspace{1}(\BR)})\leq 0;
	\end{align*}
	and so, $\norm{u}_{\Lspace{1}(\BR)}=\norm{u}_{\Lspace{\frac{n}{n-s}}(\comp{\BR})}=0$, allowing us to conclude.
	
\end{proof}

We now recall the following definition from \cite{BraLinPar}.

\begin{defn}[s-Cheeger set]\label{def:sCheeger}
	Let $s\in(0,1)$ and $E\in\measurablesets$ be a bounded set with nonempty interior. Then,
	\begin{equation}\label{eq:sCheegerConstant}
		\Cheegs(E):=\inf_{U\in\measurablesets, U\subseteq E}\frac{\Ps(U)}{|U|}
	\end{equation}
is the \textit{s-Cheeger constant} of $E$. A set $U=U(E)$ achieving the infimum in \eqref{eq:sCheegerConstant} is said to be an \textit{s-Cheeger set} of $E$, and we let
\begin{equation*}
	\sCheegerClass(E):=\Set{U}{U \text{ s-Cheeger set of }E}\cup \{\emptyset\}.
\end{equation*}
Furthermore, $E$ is said to be \textit{s-calibrable} if it is an s-Cheeger set of itself; namely,
\begin{equation}\label{eq:sCalibrable}
	\Cheegs(E)=\frac{\Ps(E)}{|E|}.
\end{equation}
\end{defn}
\begin{remark}
	 Balls are examples of s-calibrable sets of $\Renne$, as shown in \cite[Remark 5.2]{BraLinPar}.
	 Moreover, the minimum problem \eqref{eq:sCheegerConstant} always admits solutions if $E$ is bounded with non-empty interior \cite[cf.][Proposition 5.3]{BraLinPar}. Nevertheless, to the best of our knowledge, it is not known whether uniqueness holds if $E$ is convex, although this is true for the classical Cheeger problem.	Finally, we highlight that \eqref{eq:sCheegerConstant} differs by a multiplicative factor $2$ from the corresponding definition in \cite{BraLinPar}. 
\end{remark}
The following result refines \Cref{cor:UniquenessWithConvexDatum}, providing more information on the solutions of $\FunctionalProblem{\characteristic{E}}{\Lambda}$ if the set $E$ is bounded and convex. 
\begin{thm}\label{thm:convexsCheeger}
	Let $s\in(0,1)$ and $E\subset\Renne$ be a bounded convex set with nonempty interior.
	Then, $\FunctionalProblem{\characteristic{E}}{\Lambda}$ admits a unique solution for $\lebone-$a.e. $\Lambda>0$. Furthermore,
	\begin{enumerate}[label=(\roman*)]
		\item\label{item:Sone} if $0<\Lambda<\Cheegs(E)$, then $\FunctionalProblemSolutions{\characteristic{E},\Lambda}=\{0\}$;
		\item\label{item:Stwo} if $\Lambda=\Cheegs(E)$, then
		$\GeometricProblemSolutions{E, \Lambda}=\sCheegerClass(E)$ and \\
		$\FunctionalProblemSolutions{\characteristic{E},\Lambda} = \Set{u\in\Wspace{s}{1}(\Renne;[0,1])}{\{u>t\} \in \sCheegerClass(E) \text{ for all }t\in [0,1)}$;
		\item\label{item:Sthree} if $\Lambda>\Cheegs(E)$, then $0\notin\FunctionalProblemSolutions{\characteristic{E},\Lambda}$.
	\end{enumerate}
\end{thm}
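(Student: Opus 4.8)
The plan is to reduce everything to the geometric problem $\GeometricProblem{E}{\Lambda}$ via \Cref{cor:BINequivalenceFuncGeomPBs}, and then to exploit the elementary identity
\begin{equation*}
\sEnergyGeometric{(W;E,\Lambda)}=\Ps(W)+\Lambda\,|E\setminus W|=\Lambda\,|E|+\big(\Ps(W)-\Lambda\,|W|\big)\qquad\text{for }W\subseteq E,
\end{equation*}
which is relevant because every solution of $\GeometricProblem{E}{\Lambda}$ is automatically contained in $E$: replacing a finite-energy competitor $W$ by $W\cap E$ does not increase $\Ps$ (by convexity of $E$ and the properties of $\Ps$ recalled in \Cref{sec:notation}, exactly as in the proof of \Cref{lem:maximum and minimum solutions}) and strictly decreases $|E\Delta W|$ unless $W\subseteq E$. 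The sign of $\Ps(W)-\Lambda|W|$, read through the $s$-Cheeger constant $\Cheegs(E)$, then separates the three regimes, while $\emptyset$ always realises the value $\Lambda|E|$. The statement that $\FunctionalProblem{\characteristic{E}}{\Lambda}$ has a unique solution for $\lebone$-a.e.\ $\Lambda$ is just \Cref{cor:UniquenessWithConvexDatum}, so nothing new is needed there.

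For \ref{item:Sone} I would argue that when $0<\Lambda<\Cheegs(E)$, every $W\subseteq E$ with $|W|>0$ satisfies $\Ps(W)/|W|\ge\Cheegs(E)>\Lambda$, hence $\sEnergyGeometric{(W;E,\Lambda)}>\Lambda|E|=\sEnergyGeometric{(\emptyset;E,\Lambda)}$; thus $\GeometricProblemSolutions{E,\Lambda}=\{\emptyset\}$, and \Cref{cor:BINequivalenceFuncGeomPBs} forces $\FunctionalProblemSolutions{\characteristic{E},\Lambda}=\{0\}$ (any solution $u$ has $0\le u\le1$ a.e.\ with all superlevel sets empty, so $u=0$; conversely $0=\characteristic{\emptyset}$ is a solution by part \ref{item:BINfromGPtoP}). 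For \ref{item:Sthree}, when $\Lambda>\Cheegs(E)$ I would take an $s$-Cheeger set $W$ of $E$, which exists and has positive measure by \cite[Proposition 5.3]{BraLinPar}, and compute $\sEnergyLOneFracTV{(\characteristic{W};\characteristic{E},\Lambda)}=\Lambda|E|+(\Cheegs(E)-\Lambda)|W|<\Lambda|E|=\sEnergyLOneFracTV{(0;\characteristic{E},\Lambda)}$, so $0$ cannot be a minimiser.

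The main case is \ref{item:Stwo}, with $\Lambda=\Cheegs(E)$. First I would establish $\GeometricProblemSolutions{E,\Lambda}=\sCheegerClass(E)$: for $W\subseteq E$ the identity above gives $\sEnergyGeometric{(W;E,\Lambda)}\ge\Lambda|E|$, with equality precisely when $|W|=0$ or $\Ps(W)/|W|=\Cheegs(E)$, i.e.\ when $W\in\sCheegerClass(E)$, and $\emptyset$ attains the minimum. The inclusion $\subseteq$ of the functional identity is then immediate from \Cref{cor:BINequivalenceFuncGeomPBs} part \ref{item:BINfromPtoGP} and \Cref{prop:existenceL1Datum} ($u\in\Wspace{s}{1}(\Renne)$, $0\le u\le1$ a.e., $\{u>t\}\in\GeometricProblemSolutions{E,\Lambda}=\sCheegerClass(E)$ for $t\in[0,1)$). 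For $\supseteq$, given $u\in\Wspace{s}{1}(\Renne;[0,1])$ with $\{u>t\}\in\sCheegerClass(E)$ for all $t\in[0,1)$, I would insert it into \eqref{eq:LinkFuncGeom} with $f=\characteristic{E}$: the integrand $\sEnergyGeometric{(\{u>t\};\{f>t\},\Lambda)}$ vanishes for $t<0$ and for $t\ge1$ (there $\{u>t\}$ and $\{f>t\}$ agree, being $\Renne$ resp.\ $\emptyset$), and for $t\in[0,1)$ it equals $m:=\min_{W\in\measurablesets}\sEnergyGeometric{(W;E,\Lambda)}$ since $\{u>t\}\in\sCheegerClass(E)=\GeometricProblemSolutions{E,\Lambda}$; hence $\sEnergyLOneFracTV{(u;\characteristic{E},\Lambda)}=m<\infty$. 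On the other hand $\characteristic{W_0}\in\FunctionalProblemSolutions{\characteristic{E},\Lambda}$ for any geometric minimiser $W_0$ (part \ref{item:BINfromGPtoP}) and $\sEnergyLOneFracTV{(\characteristic{W_0};\characteristic{E},\Lambda)}=\sEnergyGeometric{(W_0;E,\Lambda)}=m$, so $m$ is the minimal value of $\sEnergyLOneFracTV{(\cdot;\characteristic{E},\Lambda)}$; therefore $u$ is a solution.

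I do not expect a genuine obstacle, since the theorem is essentially an assembly of the earlier results. The points that need care are: the reduction ``every solution of $\GeometricProblem{E}{\Lambda}$ lies in $E$'', which is what makes the key identity for $\sEnergyGeometric{(W;E,\Lambda)}$ available; the observation that at the threshold $\Lambda=\Cheegs(E)$ the bracket $\Ps(W)-\Cheegs(E)|W|$ is nonnegative for $W\subseteq E$ and vanishes exactly on $\sCheegerClass(E)$; and the bookkeeping of the tails $t<0$ and $t\ge1$ in the coarea representation \eqref{eq:LinkFuncGeom} used for the inclusion $\supseteq$ in \ref{item:Stwo}.
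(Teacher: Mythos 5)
Your proposal is correct and follows essentially the same route as the paper: both reduce $\GeometricProblem{E}{\Lambda}$ to competitors $W\subseteq E$ via convexity, rewrite the energy as $\Lambda|E|+\bigl(\Ps(W)-\Lambda|W|\bigr)$, and read off the three regimes from the sign of this bracket relative to $\Cheegs(E)$, with \Cref{cor:UniquenessWithConvexDatum} supplying the a.e.\ uniqueness and \Cref{cor:BINequivalenceFuncGeomPBs} together with \eqref{eq:LinkFuncGeom} handling the passage between the geometric and functional problems. The only cosmetic difference is that the paper phrases the argument through the threshold $\Lambda_{*}:=\sup\{\Lambda>0:\ 0\in\FunctionalProblemSolutions{\characteristic{E},\Lambda}\}$ and then identifies $\Lambda_{*}=\Cheegs(E)$, while you work with $\Cheegs(E)$ directly; the substance is identical.
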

\begin{proof}
	We start by observing that, since $E$ is bounded and convex, \ref{Pb:GeometricProblem} is equivalent to:
	\begin{equation}\label{Pb:GeomProbConvexDatum}
		\min_{U\in\measurablesets, U\subseteq E}\Ps(U)-\Lambda|U|.
	\end{equation}
	\newcommand{\Lambdastardown}{\Lambda_{*}}
	We put
	\begin{equation*}
		\Lambdastardown:=\sup\Set{\Lambda>0}{0\in\FunctionalProblemSolutions{\characteristic{E}, \Lambda}}.
	\end{equation*}
Since $E$ is bounded, thanks to \Cref{thm:lowFidelity}, $0<\Lambdastardown\leq +\infty$. 

We now claim that $\Lambdastardown=\Cheegs(E)$.

First, we fix $0<\Lambda\leq\Cheegs(E)$. So, by definition of $\Cheegs(E)$, we find out that
\begin{align*}
	\Ps(U)-\Lambda|U|\geq 0
\end{align*}
for all $U\in\measurablesets$ s.t. $U\subseteq E$. This, recalling the equivalence between \ref{Pb:GeometricProblem} and \eqref{Pb:GeomProbConvexDatum}, leads to $\emptyset\in\GeometricProblemSolutions{E, \Lambda}$ and consequently to $0\in\FunctionalProblemSolutions{\characteristic{E}, \Lambda}$. So $\Lambdastardown\geq\Cheegs(E)$.

Now, we fix $\Lambda>\Cheegs(E)$ and we notice that, by definition of $\Cheegs(E)$, there exists $U=U(\Lambda)\in\measurablesets$ contained in $E$ such that
\begin{align*}
	\Ps(U)-\Lambda|U|< 0,
\end{align*} 
which implies $0\notin\FunctionalProblemSolutions{\characteristic{E}, \Lambda}$, thus proving the claim.

We now show that, for $0<\Lambda<\Lambdastardown$ we have $\GeometricProblemSolutions{E,\Lambda}=\{\emptyset\}$, so $\FunctionalProblemSolutions{\characteristic{E}, \Lambda}=\{0\}$. 
Indeed, if $\lambda>0$ is such that $0\in\FunctionalProblemSolutions{E, \lambda}$, which implies $\emptyset\in\GeometricProblemSolutions{E,\lambda}$, then any $0<\Lambda<\lambda$ satisfies,
\begin{align*}
	\Ps(U)-\Lambda|U|\geq \Ps(U)-\lambda|U|\geq 0
\end{align*} 
for every measurable $U\subseteq E$. So, $\emptyset\in\GeometricProblemSolutions{E,\Lambda}$. Now, assume $U\in\GeometricProblemSolutions{E,\Lambda}$ and $U\neq\emptyset$. Then, by the equivalence between \ref{Pb:GeometricProblem} and \eqref{Pb:GeomProbConvexDatum}, it follows that $\Lambda$ is equal to the s-Cheeger constant of $E$, which is a contradiction. This proves $\ref{item:Sone}$.

If $\Lambda=\Lambdastardown$, thanks to the equivalence between \ref{Pb:GeometricProblem} and \eqref{Pb:GeomProbConvexDatum}, 
we deduce $\GeometricProblemSolutions{E, \Lambda}=\sCheegerClass(E)$. From \Cref{prop:equivalenceFuncGeomPBs} and \Cref{cor:BINequivalenceFuncGeomPBs}
it then follows that $\FunctionalProblemSolutions{\characteristic{E},\Lambda} = \Set{u\in\Wspace{s}{1}(\Renne;[0,1])}{\{u>t\} \in \sCheegerClass(E) \text{ for all }t\in [0,1)}$.
This proves $\ref{item:Stwo}$.

$\ref{item:Sthree}$ follows directly from the definition of $\Lambdastardown=\Cheegs(E)$.
\end{proof}

If the set $E$ is convex and $s$-calibrable, we can characterize the solution also for $\Lambda>\Cheegs(E)$.

\begin{thm}\label{thm:convexsCalibrable}
	Let $s\in(0,1)$ and $E\subset\Renne$ be a bounded convex set which is $s$-calibrable. 
	Then, $\FunctionalProblemSolutions{\characteristic{E},\Lambda}=\{\characteristic{E}\}$ for all $\Lambda>\Cheegs(E)$.
\end{thm}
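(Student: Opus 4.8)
The plan is to reduce the statement to the geometric problem \ref{Pb:GeometricProblem} with datum $E$, prove that $E$ is its unique solution whenever $\Lambda>\Cheegs(E)$, and then transfer this conclusion to \ref{Pb:FunctionalProblem} via \Cref{cor:BINequivalenceFuncGeomPBs}. As a first step, exactly as in the proof of \Cref{thm:convexsCheeger}, I would use the convexity and boundedness of $E$ to rewrite \ref{Pb:GeometricProblem} as the equivalent reduced problem
\begin{equation*}
\min_{U\in\measurablesets,\ U\subseteq E}\Ps(U)-\Lambda|U|,
\end{equation*}
the justification being that intersecting any competitor of finite measure with $E$ does not increase $\Ps$ (convexity of $E$) and does not increase $|E\Delta U|$, strictly decreasing it when the competitor is not contained in $E$. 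Consequently $\GeometricProblemSolutions{E,\Lambda}$ coincides with the set of minimizers of this reduced functional.

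The heart of the argument is then a one-line algebraic decomposition that exploits $s$-calibrability. Writing $\Lambda=\Cheegs(E)+\delta$ with $\delta>0$, for every $U\in\measurablesets$ with $U\subseteq E$ one has
\begin{equation*}
\Ps(U)-\Lambda|U|=\bigl(\Ps(U)-\Cheegs(E)\,|U|\bigr)-\delta\,|U|.
\end{equation*}
By the definition \eqref{eq:sCheegerConstant} of the $s$-Cheeger constant, the bracketed term is nonnegative and vanishes exactly when $U$ is an $s$-Cheeger set of $E$ (or $U=\emptyset$), while $-\delta|U|\geq-\delta|E|$ with equality if and only if $|U|=|E|$, i.e. $U=E$ up to a negligible set. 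Hence $\Ps(U)-\Lambda|U|\geq-\delta|E|$ for every admissible $U$, and equality forces $U$ to be at once an $s$-Cheeger set of $E$ and of full measure inside $E$, so $U=E$. Since $E$ is $s$-calibrable it is an $s$-Cheeger set of itself, whence $\Ps(E)-\Lambda|E|=(\Cheegs(E)-\Lambda)|E|=-\delta|E|$; thus the value $-\delta|E|$ is attained, and only at $E$. Therefore $\GeometricProblemSolutions{E,\Lambda}=\{E\}$ for every $\Lambda>\Cheegs(E)$.

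To conclude I would apply \Cref{cor:BINequivalenceFuncGeomPBs}, available because $E$ is bounded: part \ref{item:BINfromGPtoP} gives $\characteristic{E}\in\FunctionalProblemSolutions{\characteristic{E},\Lambda}$, while part \ref{item:BINfromPtoGP} shows that any $u\in\FunctionalProblemSolutions{\characteristic{E},\Lambda}$ satisfies $0\leq u\leq 1$ a.e. and $\{u>t\}\in\GeometricProblemSolutions{E,\Lambda}=\{E\}$ for every $t\in[0,1)$; since $u\leq 1$ a.e. and $\{u>t\}=E$ for all $t\in[0,1)$, this forces $u=\characteristic{E}$ a.e. I do not expect a genuine obstacle: once the reduction to subsets of $E$ and the Cheeger decomposition are set up, the rest is bookkeeping. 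The points that deserve a line of care are the equivalence with the problem restricted to subsets of $E$ (already carried out inside the proof of \Cref{thm:convexsCheeger}), the fact that $E$ has nonempty interior and finite $s$-perimeter so that $\Cheegs(E)\in(0,+\infty)$ and all quantities above are well defined, and the remark that the threshold $\Cheegs(E)$ used here is exactly the value identified in \Cref{thm:convexsCheeger} below which $0$ is the unique solution, so that the present result really does cover the complementary range $\Lambda>\Cheegs(E)$.
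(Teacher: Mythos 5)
Your proof is correct and follows essentially the same route as the paper: reduce to the geometric problem over subsets $U\subseteq E$ by convexity, split $\Lambda=\Cheegs(E)+\delta$ and use the Cheeger inequality $\Ps(U)\geq\Cheegs(E)|U|$ together with the strict monotonicity of $-\delta|U|$ in $|U|$ to get $\GeometricProblemSolutions{E,\Lambda}=\{E\}$, then transfer via \Cref{cor:BINequivalenceFuncGeomPBs}. The paper writes this as a chain of inequalities comparing $E$ with an arbitrary $V\subsetneqq E$, which is the same computation you organize as identifying the minimum value $-\delta|E|$ and its unique attainment.
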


\begin{proof}
	Since $E$ is bounded and convex, the equivalence between \ref{Pb:GeometricProblem} and \eqref{Pb:GeomProbConvexDatum} holds for every $\Lambda>0$.
	Let $\Lambda>\Cheegs(E)$. Being $E$ s-calibrable, from \eqref{eq:sCalibrable} and \Cref{thm:convexsCheeger} we deduce that $\GeometricProblemSolutions{E, \Cheegs(E)}=\{E\}$.
	Now, for $V\in\measurablesets$ s.t. $V\subsetneqq E$ we have,
	\begin{align*}
		&\Ps(E)-\Lambda|E|=
		\Ps(E)-\Cheegs(E)|E|-(\Lambda-\Cheegs(E))|E|\\
		&\leq\Ps(V)-\Cheegs(E)|V|-(\Lambda-\Cheegs(E))|E|\\
		&<\Ps(V)-\Cheegs(E)|V|-(\Lambda-\Cheegs(E))|V|=\Ps(V)-\Lambda|V|.		
	\end{align*}
Therefore, $\GeometricProblemSolutions{E, \Lambda}=\{E\}$ and, consequently, $\FunctionalProblemSolutions{\characteristic{E},\Lambda}=\{\characteristic{E}\}$, by \Cref{cor:BINequivalenceFuncGeomPBs}. 	
\end{proof}

\begin{cor}\label{cor:ConvexSCalibrableRegular}
	Let $s\in(0,1)$. If $E\subset\Renne$ is a bounded convex set which is $s$-calibrable, then $\partial E$ is of class $C^1$. 
\end{cor}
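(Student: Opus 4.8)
The plan is to read off the statement from \Cref{thm:convexsCalibrable} and \Cref{cor:regularity}, the only real work being a comparison argument that rules out corners of $\partial E$. Fix $\Lambda>\Cheegs(E)$. By \Cref{thm:convexsCalibrable}, $\characteristic{E}$ is the unique solution of $\FunctionalProblem{\characteristic{E}}{\Lambda}$, and then by \Cref{cor:BINequivalenceFuncGeomPBs} the set $E$ itself minimizes the geometric energy, i.e.
\begin{equation*}
\Ps(E)\le\Ps(U)+\Lambda|E\Delta U|\qquad\text{for all }U\in\measurablesets .
\end{equation*}
Applying \Cref{cor:regularity} to $u=\characteristic{E}$, whose superlevel set $\{u>1/2\}$ equals $E$, we get that $\partial E$ is of class $C^1$ outside a closed set $S$ with $\dim_{\mathcal H}S\le n-3$; for $n\le 2$ this already forces $S=\emptyset$ and the claim is immediate. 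Since at every point outside $S$ the boundary is $C^1$, hence has a unique outer unit normal, it suffices to show that $\partial E$ has a unique outer normal at each of its points.

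Suppose, for contradiction, that $\partial E$ has at least two distinct outer unit normals at some point, which after a translation I take to be the origin. Then the tangent cone $K:=\overline{\union[t>0]tE}$ is a closed convex cone with vertex $0$ that is \emph{not} a half-space, it satisfies $E\subseteq K$, and $E/r\uparrow K$ up to an $\lebn$-negligible set as $r\downarrow 0$. I would test the minimality of $E$ against the competitor $U=E\setminus\Br$ (with $\Br=\BallRadiusCenter{r}{0}$): using the elementary identity $\Ps(E)-\Ps(E\setminus\Br)=\interaction((E\cap\Br)\times\comp{E})-\interaction((E\setminus\Br)\times(E\cap\Br))$, which follows from additivity of $\interaction$ exactly as in the manipulations in the proof of \Cref{lem:Comparison}, one obtains
\begin{equation*}
\interaction((E\cap\Br)\times\comp{E})-\interaction((E\setminus\Br)\times(E\cap\Br))\le\Lambda|E\cap\Br|\le\Lambda\omegan r^n .
\end{equation*}
On the other hand, since $E\subseteq K$ we may replace $\comp{E}$ by the smaller set $\comp{K}$ and $E\setminus\Br$ by the larger set $K\setminus\Br$; then, rescaling $x\mapsto rx$, $y\mapsto ry$ (legitimate because $K$ and $\comp{K}$ are cones), the left-hand side is bounded below by $r^{n-s}\integral{(E/r)\cap\BallRadius{1}}g$, where
\begin{equation*}
g(x):=\integral{\comp{K}}\frac{1}{|x-y|^{n+s}}\integralde y-\integral{K\setminus\BallRadius{1}}\frac{1}{|x-y|^{n+s}}\integralde y .
\end{equation*}
One checks that $g$ is finite on $\mathrm{int}(K)\cap\BallRadius{1}$, that its negative part is integrable there (it is dominated by $x\mapsto\integral{K\setminus\BallRadius{1}}|x-y|^{-n-s}\integralde y$, whose integral over $\BallRadius{1}$ is at most $\cn$), and that $g(x)\to+\infty$ as $x\to 0$, because $\comp{K}$ has positive solid angle so that $\integral{\comp{K}}|x-y|^{-n-s}\integralde y\to+\infty$ while the other term stays bounded. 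By monotone convergence $\integral{(E/r)\cap\BallRadius{1}}g\to c_0:=\integral{K\cap\BallRadius{1}}g\in(0,+\infty)$, hence $c_0\,r^{n-s}\le(1+o(1))\,\Lambda\omegan r^{n}$ as $r\downarrow 0$, which forces $c_0\le 0$ --- a contradiction. Therefore $\partial E$ has a unique outer normal everywhere, i.e. $\partial E\in C^1$.

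The main obstacle is the last estimate: one has to carry out the rescaling carefully and to justify $c_0>0$, $c_0<+\infty$ and the passage $r\downarrow 0$; in essence one must make precise that the fractional mean curvature of a convex body blows up at a genuine corner, so that the $r^{n-s}$ perimeter gained by chopping off the corner beats the $\omegan r^{n}$ volume penalty. An alternative, perhaps slicker, route avoids this computation: by the regularity theory underlying \Cref{cor:regularity}, the tangent cone $K$ at a singular point is a nonlocal minimal cone; but a convex nonlocal minimal cone is necessarily a half-space --- at a smooth point $x\in\partial K$, convexity gives $K\subseteq H$ for a half-space $H$ with $x\in\partial H$, so the fractional mean curvature of $K$ at $x$ is at least that of $H$, namely $0$, and minimality forces equality, whence $K=H$ --- contradicting that $K$ is not a half-space. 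Either way one concludes $\partial E\in C^1$.
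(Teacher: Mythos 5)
Your ``alternative, slicker route'' is essentially the paper's proof: from \Cref{thm:convexsCalibrable} and \Cref{cor:BINequivalenceFuncGeomPBs} one gets that $E$ itself minimizes $\Ps(\cdot)+\Lambda|E\Delta\cdot|$, hence is an almost minimizer of $\Ps$; the blow-up at any boundary point is then a minimizing cone, which by convexity of $E$ is convex, and convex nonlocal minimal cones are half-spaces (the paper cites \cite[Theorem~1.1]{FigVal}, whereas you sketch the mean-curvature comparison at a smooth point of the cone --- that sketch is the right idea). Combined with the $\varepsilon$-regularity behind \Cref{cor:regularity} (blow-up a half-space $\Rightarrow$ regular point), or with your observation that for convex sets a unique outer normal everywhere already gives $C^1$, this closes the argument. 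Had you led with this, the proof would match the paper's.

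The main route, however, has a genuine gap at its crux, namely the claim $c_0:=\integral{K\cap\BallRadius{1}}g>0$. The justification offered --- $g(x)\to+\infty$ as $x\to 0$ while the negative part of $g$ has integral at most $\cn$ --- cannot suffice, because it applies verbatim when $K$ is a half-space $H$: there too $\integral{\comp{H}}|x-y|^{-n-s}\integralde y\simeq \dist(x,\partial H)^{-s}\to+\infty$ and the negative part is integrable, yet the corresponding constant satisfies $c_0^H\le 0$. Indeed, your own chain of inequalities, applied at a \emph{smooth} boundary point of $E$ (where the tangent cone is the supporting half-space), yields $c_0^H r^{n-s}(1+o(1))\le\Lambda\omegan r^n$ and hence $c_0^H\le 0$; equivalently, $c_0^H=\interaction\big((H\cap\BallRadius{1})\times\comp{H}\big)-\interaction\big((H\setminus\BallRadius{1})\times(H\cap\BallRadius{1})\big)\le 0$ because half-spaces are nonlocal area minimizers. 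So the blow-up of $g$ at the vertex does not distinguish a corner from a flat point; what does is the sign of the \emph{principal-value} cancellation between $\comp{K}$ and $K$, i.e.\ precisely the statement that the fractional mean curvature of a convex cone with a genuine corner is strictly positive in an integrated sense. Proving $c_0>0$ for convex cones that are not half-spaces is essentially equivalent to the classification of convex nonlocal minimal cones, so the ``chop off the corner'' computation does not bypass the cited result --- it \emph{is} that result, and as written it is not established.
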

\begin{proof}
	\newcommand{\Eblowup}{\tilde{E}}
	By \Cref{thm:convexsCalibrable} and \Cref{cor:regularity}, we get that $\partial E$ is of class $C^1$ outside a closed singular set $S$ of Hausdorff dimension at most $n-3$. Since $E\in\GeometricProblemSolutions{E, \Lambda}$ for $\Lambda>\Cheegs(E)$, $E$ is a nonlocal almost minimal set for $\Ps$ \cite[see][Theorem 3.3]{CesNov17}. 
	Now we fix $x\in\partial E$ and we consider a blow-up of $E$ at $x$, which we denote by $\Eblowup$.	
	By \cite[Theorem 4.7]{CapGui}, we deduce that $\Eblowup$ is a minimizer for $\Ps$ and a cone. 
	Moreover, being $E$ convex, $\Eblowup$ is convex. Therefore, by applying \cite[Theorem 1.1]{FigVal} to $\Eblowup$, we deduce that the cone $\Eblowup$ is a half-space. Then, $x$ does not belong to $S$; otherwise $\Eblowup$ would be a singular cone \cite[cf.][Definition 9.5]{CafRoqSav}.	  
\end{proof}

\section{Acknowledgements}
The author wishes to thank Matteo Novaga for his comments and his support throughout the preparation of the whole work.
The author is member of the INdAM GNAMPA.
This research did not receive any specific grant from funding agencies in the public, commercial, or not-for-profit sectors.

\bibliographystyle{abbrvnat}	
\bibliography{bibliografia}

\begin{thebibliography}{24}
\providecommand{\natexlab}[1]{#1}
\providecommand{\url}[1]{\texttt{#1}}
\expandafter\ifx\csname urlstyle\endcsname\relax
  \providecommand{\doi}[1]{doi: #1}\else
  \providecommand{\doi}{doi: \begingroup \urlstyle{rm}\Url}\fi

\bibitem[Acerbi et~al.(2013)Acerbi, Fusco, and Morini]{AceFusMor}
E.~Acerbi, N.~Fusco, and M.~Morini.
\newblock Minimality via second variation for a nonlocal isoperimetric problem.
\newblock \emph{Comm. Math. Phys.}, 322\penalty0 (2):\penalty0 515--557, 2013.
\newblock ISSN 0010-3616.
\newblock \doi{10.1007/s00220-013-1733-y}.
\newblock URL \url{https://doi.org/10.1007/s00220-013-1733-y}.

\bibitem[Ambrosio et~al.(2000)Ambrosio, Fusco, and Pallara]{AmbFusPal}
L.~Ambrosio, N.~Fusco, and D.~Pallara.
\newblock \emph{Functions of bounded variation and free discontinuity
  problems}.
\newblock Oxford Mathematical Monographs. The Clarendon Press, Oxford
  University Press, New York, 2000.
\newblock ISBN 0-19-850245-1.

\bibitem[Brasco et~al.(2014)Brasco, Lindgren, and Parini]{BraLinPar}
L.~Brasco, E.~Lindgren, and E.~Parini.
\newblock The fractional {C}heeger problem.
\newblock \emph{Interfaces Free Bound.}, 16\penalty0 (3):\penalty0 419--458,
  2014.
\newblock ISSN 1463-9963.
\newblock \doi{10.4171/IFB/325}.
\newblock URL \url{https://doi.org/10.4171/IFB/325}.

\bibitem[Buades et~al.(2005)Buades, Coll, and Morel]{BuaColMor}
A.~Buades, B.~Coll, and J.~M. Morel.
\newblock A review of image denoising algorithms, with a new one.
\newblock \emph{Multiscale Model. Simul.}, 4\penalty0 (2):\penalty0 490--530,
  2005.
\newblock ISSN 1540-3459.
\newblock \doi{10.1137/040616024}.
\newblock URL \url{https://doi.org/10.1137/040616024}.

\bibitem[Caffarelli et~al.(2010)Caffarelli, Roquejoffre, and Savin]{CafRoqSav}
L.~Caffarelli, J.-M. Roquejoffre, and O.~Savin.
\newblock Nonlocal minimal surfaces.
\newblock \emph{Comm. Pure Appl. Math.}, 63\penalty0 (9):\penalty0 1111--1144,
  2010.
\newblock ISSN 0010-3640.
\newblock \doi{10.1002/cpa.20331}.
\newblock URL \url{https://doi.org/10.1002/cpa.20331}.

\bibitem[Caputo and Guillen('Unpublished results')]{CapGui}
M.~C. Caputo and N.~Guillen.
\newblock Regularity for non-local almost minimal boundaries and applications,
  'Unpublished results'.
\newblock URL \url{https://arxiv.org/abs/1003.2470}.

\bibitem[Cesaroni and Novaga(2017)]{CesNov17}
A.~Cesaroni and M.~Novaga.
\newblock Volume constrained minimizers of the fractional perimeter with a
  potential energy.
\newblock \emph{Discrete Contin. Dyn. Syst. Ser. S}, 10\penalty0 (4):\penalty0
  715--727, 2017.
\newblock ISSN 1937-1632.
\newblock \doi{10.3934/dcdss.2017036}.
\newblock URL \url{https://doi.org/10.3934/dcdss.2017036}.

\bibitem[Cesaroni and Novaga(2018)]{CesNov}
A.~Cesaroni and M.~Novaga.
\newblock The isoperimetric problem for nonlocal perimeters.
\newblock \emph{Discrete Contin. Dyn. Syst. Ser. S}, 11\penalty0 (3):\penalty0
  425--440, 2018.
\newblock ISSN 1937-1632.
\newblock \doi{10.3934/dcdss.2018023}.
\newblock URL \url{https://doi.org/10.3934/dcdss.2018023}.

\bibitem[Chambolle et~al.(2015)Chambolle, Morini, and Ponsiglione]{CMP}
A.~Chambolle, M.~Morini, and M.~Ponsiglione.
\newblock Nonlocal curvature flows.
\newblock \emph{Arch. Ration. Mech. Anal.}, 218\penalty0 (3):\penalty0
  1263--1329, 2015.
\newblock ISSN 0003-9527.
\newblock \doi{10.1007/s00205-015-0880-z}.
\newblock URL \url{https://doi.org/10.1007/s00205-015-0880-z}.

\bibitem[Chan and Esedo\={g}lu(2005)]{ChaEse}
T.~F. Chan and S.~Esedo\={g}lu.
\newblock Aspects of total variation regularized {$L^1$} function
  approximation.
\newblock \emph{SIAM J. Appl. Math.}, 65\penalty0 (5):\penalty0 1817--1837,
  2005.
\newblock ISSN 0036-1399.
\newblock \doi{10.1137/040604297}.
\newblock URL \url{https://doi.org/10.1137/040604297}.

\bibitem[Cinti et~al.(2019)Cinti, Serra, and Valdinoci]{CinSerVal}
E.~Cinti, J.~Serra, and E.~Valdinoci.
\newblock Quantitative flatness results and {$BV$}-estimates for stable
  nonlocal minimal surfaces.
\newblock \emph{J. Differential Geom.}, 112\penalty0 (3):\penalty0 447--504,
  2019.
\newblock ISSN 0022-040X.
\newblock \doi{10.4310/jdg/1563242471}.
\newblock URL \url{https://doi.org/10.4310/jdg/1563242471}.

\bibitem[Di~Nezza et~al.(2012)Di~Nezza, Palatucci, and Valdinoci]{DiNPalVal}
E.~Di~Nezza, G.~Palatucci, and E.~Valdinoci.
\newblock Hitchhiker's guide to the fractional {S}obolev spaces.
\newblock \emph{Bull. Sci. Math.}, 136\penalty0 (5):\penalty0 521--573, 2012.
\newblock ISSN 0007-4497.
\newblock \doi{10.1016/j.bulsci.2011.12.004}.
\newblock URL \url{https://doi.org/10.1016/j.bulsci.2011.12.004}.

\bibitem[Figalli and Valdinoci(2017)]{FigVal}
A.~Figalli and E.~Valdinoci.
\newblock Regularity and {B}ernstein-type results for nonlocal minimal
  surfaces.
\newblock \emph{J. Reine Angew. Math.}, 729:\penalty0 263--273, 2017.
\newblock ISSN 0075-4102.
\newblock \doi{10.1515/crelle-2015-0006}.
\newblock URL \url{https://doi.org/10.1515/crelle-2015-0006}.

\bibitem[Figalli et~al.(2015)Figalli, Fusco, Maggi, Millot, and
  Morini]{FigFusMagMilMor}
A.~Figalli, N.~Fusco, F.~Maggi, V.~Millot, and M.~Morini.
\newblock Isoperimetry and stability properties of balls with respect to
  nonlocal energies.
\newblock \emph{Comm. Math. Phys.}, 336\penalty0 (1):\penalty0 441--507, 2015.
\newblock ISSN 0010-3616.
\newblock \doi{10.1007/s00220-014-2244-1}.
\newblock URL \url{https://doi.org/10.1007/s00220-014-2244-1}.

\bibitem[Frank and Seiringer(2008)]{FraSei}
R.~L. Frank and R.~Seiringer.
\newblock Non-linear ground state representations and sharp {H}ardy
  inequalities.
\newblock \emph{J. Funct. Anal.}, 255\penalty0 (12):\penalty0 3407--3430, 2008.
\newblock ISSN 0022-1236.
\newblock \doi{10.1016/j.jfa.2008.05.015}.
\newblock URL \url{https://doi.org/10.1016/j.jfa.2008.05.015}.

\bibitem[Gilboa and Osher(2007)]{GilOsh07}
G.~Gilboa and S.~Osher.
\newblock Nonlocal linear image regularization and supervised segmentation.
\newblock \emph{Multiscale Model. Simul.}, 6\penalty0 (2):\penalty0 595--630,
  2007.
\newblock ISSN 1540-3459.
\newblock \doi{10.1137/060669358}.
\newblock URL \url{https://doi.org/10.1137/060669358}.

\bibitem[Gilboa and Osher(2008)]{GilOsh08}
G.~Gilboa and S.~Osher.
\newblock Nonlocal operators with applications to image processing.
\newblock \emph{Multiscale Model. Simul.}, 7\penalty0 (3):\penalty0 1005--1028,
  2008.
\newblock ISSN 1540-3459.
\newblock \doi{10.1137/070698592}.
\newblock URL \url{https://doi.org/10.1137/070698592}.

\bibitem[Mazón et~al.(2020)Mazón, Solera, and Toledo]{MazSolTol}
J.~M. Mazón, M.~Solera, and J.~Toledo.
\newblock $({BV},{L}^p)$-decomposition, $p = 1,2$, of functions in metric
  random walk spaces.
\newblock \emph{Advances in Calculus of Variations}, Nov 2020.
\newblock ISSN 1864-8258.
\newblock \doi{10.1515/acv-2020-0011}.
\newblock URL \url{http://dx.doi.org/10.1515/acv-2020-0011}.

\bibitem[Maz\'{o}n et~al.(2019)Maz\'{o}n, Rossi, and Toledo]{MazRosTol}
J.~M. Maz\'{o}n, J.~D. Rossi, and J.~Toledo.
\newblock Nonlocal perimeter, curvature and minimal surfaces for measurable
  sets.
\newblock \emph{J. Anal. Math.}, 138\penalty0 (1):\penalty0 235--279, 2019.
\newblock ISSN 0021-7670.
\newblock \doi{10.1007/s11854-019-0027-5}.
\newblock URL \url{https://doi.org/10.1007/s11854-019-0027-5}.

\bibitem[Novaga and Onoue('Unpublished results')]{NovOno21}
M.~Novaga and F.~Onoue.
\newblock Local hölder regularity of minimizers for nonlocal denoising
  problems, 'Unpublished results'.
\newblock URL \url{https://arxiv.org/abs/2107.08106}.

\bibitem[Novaga and Paolini(2002)]{NovPao}
M.~Novaga and E.~Paolini.
\newblock Regularity results for some 1-homogeneous functionals.
\newblock \emph{Nonlinear Anal. Real World Appl.}, 3\penalty0 (4):\penalty0
  555--566, 2002.
\newblock ISSN 1468-1218.
\newblock \doi{10.1016/S1468-1218(01)00048-7}.
\newblock URL \url{https://doi.org/10.1016/S1468-1218(01)00048-7}.

\bibitem[Rudin et~al.(1992)Rudin, Osher, and Fatemi]{RudOshFat}
L.~I. Rudin, S.~Osher, and E.~Fatemi.
\newblock Nonlinear total variation based noise removal algorithms.
\newblock volume~60, pages 259--268. 1992.
\newblock \doi{10.1016/0167-2789(92)90242-F}.
\newblock URL \url{https://doi.org/10.1016/0167-2789(92)90242-F}.
\newblock Experimental mathematics: computational issues in nonlinear science
  (Los Alamos, NM, 1991).

\bibitem[Yang et~al.(2016)Yang, Chen, Zhao, and Chen]{YCZC}
Q.~Yang, D.~Chen, T.~Zhao, and Y.~Chen.
\newblock Fractional calculus in image processing: a review.
\newblock \emph{Fract. Calc. Appl. Anal.}, 19\penalty0 (5):\penalty0
  1222--1249, 2016.
\newblock ISSN 1311-0454.
\newblock \doi{10.1515/fca-2016-0063}.
\newblock URL \url{https://doi.org/10.1515/fca-2016-0063}.

\bibitem[Yin et~al.(2007)Yin, Goldfarb, and Osher]{YinGolOsh}
W.~Yin, D.~Goldfarb, and S.~Osher.
\newblock The total variation regularized {$L^1$} model for multiscale
  decomposition.
\newblock \emph{Multiscale Model. Simul.}, 6\penalty0 (1):\penalty0 190--211,
  2007.
\newblock ISSN 1540-3459.
\newblock \doi{10.1137/060663027}.
\newblock URL \url{https://doi.org/10.1137/060663027}.

\end{thebibliography}

%	\nocite{*}%N.B : con questo comando tutto quello che ho scritto nel file .bib vien visualizzato nel pdf, anche se non viene mai citato nel lavoro. Lo posso togliere alla fine
	%\printbibliography %N.B: per avere stampata la bibliografia aggiornata compilo nella finestra in cui compare \usepackage{biblatex} 
	
	%\newpage
	%\bibliographystyle{siam}	
	%\bibliography{bibliografia} 
	
	%\bibliographystyle{abbrv}	
	%\bibliographystyle{plain}	
\end{document}